\setlist[itemize]{label=\textbullet}
\theoremstyle{remark}
\newtheorem*{rmq}{Remark}
\newtheorem*{ex}{Example}
\newtheorem*{notation}{Notation}
\theoremstyle{plain}
\newtheorem*{rappelprop}{Proposition}
\newtheorem*{rappel}{Theorem}
\theoremstyle{plain}
\newtheorem{theorem}{Theorem}[section]
\newtheorem{prop}[theorem]{Proposition}
\newtheorem{lemma}[theorem]{Lemma}
\newtheorem{cor}[theorem]{Corollary}
\newtheorem{definition}[theorem]{Definition}
\newcommand\blfootnote[1]{%
  \begingroup
  \renewcommand\thefootnote{}\footnote{#1}%
  \addtocounter{footnote}{-1}%
  \endgroup
}
\DeclareMathOperator{\spec}{Spec}
\DeclareMathOperator{\spm}{Spm}
\DeclareMathOperator{\fracloc}{Frac}
\DeclareMathOperator{\sing}{Sing}
\DeclareMathOperator{\im}{Im}
\DeclareMathOperator{\Norm}{Norm}
\DeclareMathOperator{\SN}{SN}
\DeclareMathOperator{\disc}{disc}
\DeclareMathOperator{\Cond}{Cond}
\newcommand{\m}{\mathfrak{m}}
\newcommand{\p}{\mathfrak{p}}
\newcommand{\q}{\mathfrak{q}}
\newcommand{\Ocal}{\mathcal{O}}
\newcommand{\Ccal}{\mathcal{C}}
\newcommand{\Zcal}{\mathcal{Z}}
\newcommand{\Jcal}{\mathcal{J}}
\newcommand{\Vcal}{\mathcal{V}}
\newcommand{\Dcal}{\mathcal{D}}
\newcommand{\Rrm}{\mathrm{Rad}}
\newcommand{\inj}{\hookrightarrow}
\newcommand{\KO}{\mathcal{K}^0}
\newcommand{\K}{\mathcal{K}}
\newcommand{\piC}{\pi_{_\mathbb{C}}}
\newcommand{\pik}{\pi_k}
\newcommand{\C}{\mathbb{C}}
\newcommand{\R}{\mathbb{R}}
\newcommand{\A}{\mathbb{A}}
\renewcommand\tableofcontents{%
  \null\hfill\textbf{\Large\contentsname}\hfill\null\par
  \@mkboth{\MakeUppercase\contentsname}{\MakeUppercase\contentsname}%
  \@starttoc{toc}%
}
\author{FRANÇOIS BERNARD}
\title{\textbf{Seminormalization and regulous functions on complex affine varieties}}
\date{}
\begin{document}

\maketitle
\vspace{-0.6cm}
\begin{abstract}\noindent
   We study seminormalization of affine complex varieties. We show that polynomials on the seminormalization correspond to the rational functions which are continuous for the Euclidean topology. We further study this type of functions which can be seen as complex \textit{regulous} functions, a class of functions recently introduced in real algebraic geometry, or as the algebraic counterpart of \textit{c-holomorphic} functions.
\end{abstract}

\makeatletter
\let\Hy@linktoc\Hy@linktoc@none
\makeatother

{\setlength{\baselineskip}{0.1\baselineskip}
\tableofcontents}

\makeatletter
\def\blfootnote{\gdef\@thefnmark{}\@footnotetext}
\makeatother

\blfootnote{2020 \textit{mathematics subject classification.} 14M05, 13F45, 14R99}

\vspace{-0.3cm}
\section{Introduction.}

The present paper is devoted to the study of seminormalization of affine complex varieties, to its link with continuous rational functions and to the study of those functions. The operation of seminormalization was formally introduced around fifty years ago in the case of analytic spaces by Andreotti and Norguet \cite{Andre}. For algebraic varieties, the seminormalization $X^+$ of $X$ is the biggest intermediate variety between $X$ and its normalization which is bijective with $X$. Recently, the concept of seminormalization appears in the study of singularities of algebraic varieties, in particular in the minimal model program of Kollár and Kovács (see \cite{Kollar2} and \cite{Kollar3}). The seminormalization has the property to have "multicross" singularities in codimension 1 (see \cite{Multicross}), it means that they are locally analytically isomorphic to the union of linear subspaces of affine space meeting transversally along a common linear subspace.\\
Around 1970 Traverso \cite{T} introduced the notion of the seminormalization $A^+_B$ of a commutative ring $A$ in an integral extension $B$. The idea is to glue together the prime ideals of $B$ lying over the same prime ideal of $A$. The seminormalization $A^+_B$ has the property that it is the biggest extension $C$ of $A$ in $B$ which is subintegral i.e. such that the map $\spec(C)\to\spec(A)$ is bijective and equiresidual (it gives isomorphisms between the residue fields). We refer to Vitulli \cite{V} for a survey on seminormality for commutative rings and algebraic varieties. See also \cite{LV}, \cite{V2} and \cite{Swan} for more detailed informations on seminormalization. \\
In the paper \cite{LV}, the authors tried to identify the coordinate ring of the seminormalization of a variety as the ring of rational functions which are continuous for the Zariski topology. Unfortunately, the Zariski topology is not strong enough for this to be true. The first aim of this paper is to show that the correct functions to consider are rational functions which are continuous for the Euclidean topology. The idea of studying the concept of seminormalization with that kind of functions comes from \cite{FMQ} and \cite{Central} in the context of real algebraic geometry. Those functions appeared recently in real algebraic geometry (see \cite{FHMM} and \cite{Kollar}) under the name of "regulous functions". They allow to recover some classical theorems of complex algebraic geometry, such as the Nullstellensatz, which normally do not hold anymore in real algebraic geometry. A complex analog of regulous functions has been studied in \cite{BDTW} and \cite{BDT} in the point of view of complex analytic geometry. The second aim of this paper is to bring a study of complex regulous functions in the point of view of complex algebraic geometry.

The paper is organized as follows. In Section \ref{SectionUniversalProp} we rewrite Traverso's construction of the seminormalization of a ring and its universal property regarding to subintegral extensions of rings.\\
In Section \ref{SectionUniversalPropGeo} we look at the seminormalization of an affine variety over an algebraically closed field of characteristic zero and to its universal property. The seminormalization of an affine variety $X$ can be seen as the biggest birational variety such that its closed points are in bijection with those of $X$.\\
In Section \ref{SectionContinuousRational} we introduce continuous rational functions on a complex affine variety $X$. More precisely we consider the functions $f : X(\C) \to \C$ which are rational on a Zariski dense open set of $X(\C)$ and which are continuous, for the Euclidean topology, on all $X(\C)$. The ring of those functions is denoted by $\KO(X(\C))$. For $\pi : Y \to X$ a finite morphism between complex affine varieties and $\piC$ its restriction to $Y(\C)$, we look at the induced morphism $f\mapsto f\circ \piC$ between rings of continuous rational functions. We show that the image of such a morphism is 
$$ \{ f\in \KO(Y(\C))\text{ with $f$ constant on the fibers of }\piC \}$$
It allows us to reinterpret subintegral extensions between coordinate rings of varieties. For $X$ and $Y$ two varieties, one gets that $\C[X] \inj \C[Y]$ is subintegral if and only if $\KO(X(\C)) \simeq \KO(Y(\C))$. The first half of this paper can be summarized by the following result :

\begin{rappel}[\ref{TheoSubSsiBij} and \ref{PropSubEquiMemeFctRatioCont}]
Let $\pi : Y \to X$ be a finite morphism between affine complex varieties. Then the following properties are equivalent :
\begin{enumerate}[topsep=0pt, partopsep=0pt, itemsep=0pt,parsep=0pt]
    \item[1)] $\pi$ is subintegral.
    \item[2)] $\piC$ is bijective.
    \item[3)] The rings $\KO(Y(\C))$ and $\KO(X(\C))$ are isomorphic.
    \item[4)] $\piC$ is an homeomorphism for the Euclidean topology.
    \item[5)] $\piC$ is an homeomorphism for the Zariski topology.
    \item[6)] $\pi$ is an homeomorphism.
\end{enumerate}
\end{rappel}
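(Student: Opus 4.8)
The plan is to separate the six conditions into an algebraic core $(1)\Leftrightarrow(2)\Leftrightarrow(3)$, which carries the actual content, and a cycle of purely topological equivalences $(2)\Leftrightarrow(4)\Leftrightarrow(5)\Leftrightarrow(6)$ which follows formally once the finiteness of $\pi$ is exploited. I would take the equivalence $(1)\Leftrightarrow(3)$ as already proved, since it is exactly the characterization of subintegral extensions through rings of continuous rational functions obtained above, and I would use the computation of the image of $f\mapsto f\circ\piC$ as the bridge between bijectivity and condition $(3)$.

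For the core, $(1)\Rightarrow(2)$ is immediate: by definition a subintegral extension $\C[X]\inj\C[Y]$ induces a bijection on $\spec$, and restricting to closed points shows that $\piC$ is a bijection. For $(2)\Rightarrow(3)$ I would argue that the pullback $f\mapsto f\circ\piC$ is an isomorphism $\KO(X(\C))\to\KO(Y(\C))$: it is well defined because $\piC$, being the restriction of a morphism, is continuous and, being surjective by $(2)$, is dominant, so that rational functions pull back to rational functions; it is injective because $\piC$ is surjective; and it is surjective because, the fibers of $\piC$ being singletons under $(2)$, the constraint ``constant on the fibers of $\piC$'' appearing in the image computation is vacuous, so every element of $\KO(Y(\C))$ is attained. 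Combining with the cited equivalence $(3)\Leftrightarrow(1)$ closes the core and in particular yields $(2)\Rightarrow(1)$, i.e.\ that Euclidean-topological bijectivity of a finite morphism already forces subintegrality.

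For the topological cycle I would use that a finite morphism is closed, both for the Zariski and for the Euclidean topology: Zariski-closedness is standard for finite (hence integral) extensions, while Euclidean-closedness comes from the fact that a finite morphism induces a proper --- hence closed, the target being locally compact Hausdorff --- map on $\C$-points. Since a continuous closed bijection is a homeomorphism, from $(2)$ one obtains both $(4)$ and $(5)$ at once; conversely $(4)\Rightarrow(2)$ and $(5)\Rightarrow(2)$ are trivial, a homeomorphism being in particular a bijection. Finally, having already secured $(1)$ from $(2)$, the $\spec$-bijectivity furnished by $(1)$ together with Zariski-closedness of $\pi$ gives that $\pi$ itself is a homeomorphism of schemes, which is $(6)$; and $(6)\Rightarrow(5)$ follows by restricting to closed points, which are preserved by any homeomorphism.

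The main obstacle I anticipate is the passage $(2)\Rightarrow(3)$ together with the Euclidean input. Although each ingredient is routine, the argument hinges on the earlier image computation and, crucially, on the fact that $\piC$ is a bijection on \emph{closed points only}: one must check that this suffices to trivialize the ``constant on fibers'' condition and to invoke the ring-theoretic characterization, rather than attempting to verify $\spec$-bijectivity and equiresiduality at non-closed points by hand (routing through $(3)$ is precisely what lets us avoid this). On the topological side the one genuinely analytic fact, which I would isolate as a lemma, is the properness --- equivalently, Euclidean-closedness --- of the map on $\C$-points attached to a finite morphism; everything else reduces to the elementary principle that a continuous closed bijection is a homeomorphism.
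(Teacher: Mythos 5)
Your topological cycle and your bridge $(2)\Rightarrow(3)$ are correct and essentially identical to the paper's: the Euclidean part rests on the properness (hence closedness) of $\piC$ for finite morphisms (the paper's Lemma \ref{LemMorphFiniDoncFerme}), the Zariski part on the closedness of integral morphisms via going-up (Lemma \ref{LemFiniBijDoncHomeoSurSpec}), and the surjectivity of $f\mapsto f\circ\piC$ under $(2)$ on the image computation $\im(\varphi)=\{f\in\KO(Y(\C))\mid f\text{ constant on the fibers of }\piC\}$ (Proposition \ref{PropConstanteSurLesFibres}). All of that matches.

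The genuine gap is the implication $(2)\Rightarrow(1)$, which is the mathematical heart of the theorem and which your proposal never proves. You obtain it by chaining $(2)\Rightarrow(3)$ with a cited equivalence $(1)\Leftrightarrow(3)$ ``obtained above''; but $(1)\Leftrightarrow(3)$ is precisely Proposition \ref{PropSubEquiMemeFctRatioCont}, i.e.\ part of the very statement being proved, and in the paper its direction $(3)\Rightarrow(1)$ is itself deduced from $(2)\Rightarrow(1)$ (one shows $(3)\Rightarrow(2)$ by separating two points of a fiber with a coordinate function, then invokes $(2)\Rightarrow(1)$). So your route is circular at exactly the hard step, and the remark that routing through $(3)$ ``lets us avoid'' verifying $\spec$-bijectivity and equiresiduality is the tell: that verification cannot be avoided. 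The paper does it in Proposition \ref{PropSubEqBij}: injectivity of $\piC$ on closed points lifts to injectivity of $\spec(\C[Y])\to\spec(\C[X])$ because, by the Nullstellensatz, a prime ideal is the intersection of the maximal ideals containing it (together with going-up to compare these sets of maximal ideals); and equiresiduality at an arbitrary prime $\p_x$ is obtained by passing to the integral subvarieties $V=\spec(\C[X]/\p_x)$, $W=\spec(\C[Y]/\p_y)$ and applying the generic fiber-counting Lemma \ref{LemRamifié}, which forces $[\K(W):\K(V)]=1$. Neither of these two arguments, nor any substitute for them, appears in your proposal; without them the equivalence of $(2)$ with the algebraic conditions $(1)$ and $(3)$ is unproven.
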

In the beginning of Subsection \ref{ConnectionBetweenRat}, we prove that continuous rational functions are regular on the smooth points of a variety. It allows us to see that, for $X$ a normal variety, the thinness of $\sing(X)$ implies $\KO(X(\C)) = \C[X]$. This fact combined with the previous theorem leads us to the main result of this paper :
\begin{rappel}[\ref{TheoRatioContEgalPolySN}]
Let $X$ be an affine complex variety and $\pi^+ : X^+ \to X$ be the seminormalization morphism. We have the following isomorphism
$$\begin{array}{lccc}
     \varphi : & \KO(X(\C)) & \xrightarrow{\sim} & \C[X^+]\\
     & f & \mapsto & f\circ \piC^+ 
\end{array}$$
\end{rappel}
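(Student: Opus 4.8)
The plan is to factor the statement into two parts: first that the pullback $\varphi\colon f\mapsto f\circ\piC^+$ identifies $\KO(X(\C))$ with $\KO(X^+(\C))$, and second that $\KO(X^+(\C))=\C[X^+]$. For the first part, observe that the seminormalization morphism $\pi^+$ is subintegral by construction, so the equivalences \ref{TheoSubSsiBij}--\ref{PropSubEquiMemeFctRatioCont} apply: $\piC^+$ is a bijective homeomorphism, and $f\mapsto f\circ\piC^+$ is a ring isomorphism $\KO(X(\C))\xrightarrow{\sim}\KO(X^+(\C))$. Thus it remains to show that every continuous rational function on $X^+$ is in fact regular, i.e. $\KO(X^+(\C))=\C[X^+]$; granting this, $\varphi$ is the desired isomorphism and in particular lands in $\C[X^+]$.

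The inclusion $\C[X^+]\subseteq\KO(X^+(\C))$ is immediate, since polynomials are rational and continuous. For the reverse inclusion I would pass to the normalization. Write $\nu\colon\tilde X\to X$ for the normalization of $X$; since $X^+$ is birational to and integral over $X$, the variety $\tilde X$ is also the normalization of $X^+$, via a finite surjective morphism $\rho\colon\tilde X\to X^+$ with $\nu=\pi^+\circ\rho$. Because $\tilde X$ is normal, $\sing(\tilde X)$ is thin, so by the regularity-on-smooth-points result one has $\KO(\tilde X(\C))=\C[\tilde X]$. Given $g\in\KO(X^+(\C))$, its pullback $G:=g\circ\rho_\C$ lies in $\KO(\tilde X(\C))=\C[\tilde X]$ and is constant on the fibers of $\rho_\C$; since $\piC^+$ is a bijection, these coincide with the fibers of $\nu_\C$. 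By the geometric description of the seminormalization (Section \ref{SectionUniversalPropGeo}), the regular functions on $\tilde X$ that are constant on the fibers of $\nu$ are exactly $\rho^*(\C[X^+])$. Hence $G=h\circ\rho_\C$ for some $h\in\C[X^+]$, and as $\rho_\C$ is surjective I conclude $g=h\in\C[X^+]$.

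To make the last step self-contained, I would verify the identification $\C[X^+]=\{\,G\in\C[\tilde X]:G\text{ constant on the fibers of }\nu_\C\,\}$ directly from the universal property: the right-hand side $R$ is an intermediate ring $\C[X]\subseteq R\subseteq\C[\tilde X]$, module-finite over $\C[X]$, and $\spm(R)\to X$ is bijective and equiresidual (automatically so over $\C$), so $\C[X]\inj R$ is subintegral and the maximality of the seminormalization forces $R\subseteq\C[X^+]$; the opposite inclusion holds because functions pulled back from $X^+$ are constant on the fibers of $\nu$. I expect this algebraic identification to be the main obstacle, since the naive hope of applying the equality $\KO=\C[\,\cdot\,]$ directly to $X^+$ fails: the seminormalization is typically singular in codimension one (its singularities are multicross), so $\sing(X^+)$ is not thin, and the detour through $\tilde X$ together with the gluing description of $X^+$ is essential. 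Finally, surjectivity of $\varphi$ can also be seen concretely, which is a useful sanity check: for $h\in\C[X^+]$ the function $h\circ(\piC^+)^{-1}$ is rational (as $\pi^+$ is birational) and continuous (as $\piC^+$ is a homeomorphism), hence lies in $\KO(X(\C))$ and is sent by $\varphi$ to $h$.
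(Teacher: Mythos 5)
Your proof is correct, and its skeleton --- pass to the normalization, where $\KO=\C[\,\cdot\,]$ because the singular locus is thin, and then characterize $\C[X^+]$ inside $\C[X']$ as the regular functions constant on the fibers of the normalization --- is exactly the paper's. The one genuinely different ingredient is how that characterization is proved. The paper does it by direct computation from Traverso's definition: via Corollary \ref{CorSemiMaxEstSemi} it writes $\C[X^+]$ as the set of $f\in\C[X']$ with $f_x\in\Ocal_{X,x}+\Rrm(\Ocal_{X',x})$ at every closed point $x$, and checks with the decomposition $f_x=\alpha+\beta$ (where $\beta$ lies in the Jacobson radical, hence in every $\m_{x_i'}\Ocal_{X',x}$) that this condition is precisely constancy on the fiber over $x$. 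You instead invoke the universal property: the ring $R$ of fiber-constant functions is a module-finite intermediate extension of $\C[X]$ in $\C[X']$ whose closed points map bijectively to $X(\C)$, hence is subintegral by Proposition \ref{PropSubEqBij} and therefore contained in $\C[X^+]$, the reverse inclusion being clear. Both are sound; the paper's version is a self-contained local computation, while yours is shorter and highlights maximality among subintegral extensions --- at the price of one point you should make explicit, namely why $\spm(R)\to X(\C)$ is injective (two points of $X'(\C)$ in the same fiber of the normalization contract to the same maximal ideal of $R$ since every element of $R$ takes equal values at them, and lying over for $R\inj\C[X']$ shows every maximal ideal of $R$ arises as such a contraction). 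Your preliminary reduction $\KO(X(\C))\simeq\KO(X^+(\C))$ via Proposition \ref{PropSubEquiMemeFctRatioCont} is harmless but unnecessary: the paper applies Proposition \ref{PropConstanteSurLesFibres} directly to the normalization morphism of $X$ and never needs to consider continuous rational functions on $X^+$ itself.
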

The results of Subsection \ref{ConnectionBetweenRat} can be summarized with the following diagramm: for every morphism $\pi : Y\to X$ such that $\C[X]\inj \C[Y]$ is subintegral, we get
$$\xymatrix{
    \KO(X(\C)) \ar[r]^{\simeq}& \KO(Y(\C)) \ar[r]^{\simeq} & \KO(X^+(\C)) \ar@{^{(}->}[r] & \KO(X'(\C))\\
    \C[X] \ar@{^{(}->}[u] \ar@{^{(}->}[r]^{subint.} & \C[Y] \ar@{^{(}->}[u] \ar@{^{(}->}[r]^{subint.} & \C[X^+] \ar@{=}[u] \ar@{^{(}->}[r] & \C[X'] \ar@{=}[u]
}$$
In subsection \ref{SubsectionContinuousRationalvsRegulous} we look at a consequence of theorem \ref{TheoRatioContEgalPolySN} : the restriction of a complex continuous rational function on a subvariety is still a rational function. It is an interesting fact because it says that, unlike the real case, continuous rational functions are regulous functions.\\
In the remaining of Section \ref{SectionContinuousRational}, we are interesting in finding criteria for a continuous function to be rational and then for a rational function to be continuous. In Subsection \ref{TheRingOfRat}, we show that a continuous function on $X(\C)$ which is a root for a polynomial with coefficients in $\C[X]$ is necessarily rational. It implies two results. First, we get that $\KO(X(\C))$ is the integral closure of $\C[X]$ in $\Ccal^0(X(\C),\C)$. 
Secondly, we get an algebraic version of Whitney's theorem 4.5Q in \cite{Whitney}, saying that a continuous function on the closed points of an affine variety is rational if and only if its graph is Zariski closed. The second point says that c-holomorphic functions with algebraic graph studied in \cite{BDTW} and \cite{BDT} correspond, for algebraic varieties, to the continuous rational functions considered in this paper.\\
Finally, in Subsection \ref{ExamplesOfRat}, we give several nontrivial examples of continuous rational functions thanks to the following criterion. 
\begin{rappel}[\ref{TheoGraphEntiereDoncCont}]
Let $X$ be an affine complex variety. A function $f : X(\C) \to \C$ is a continuous rational function if and only if it is rational, integral over $\C[X]$ and its graph is Zariski closed.
\end{rappel}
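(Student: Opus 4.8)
The plan is to prove the two implications separately, taking the forward implication almost verbatim from results already obtained in this section and concentrating the effort on the converse.

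For the direct implication I would assume $f \in \KO(X(\C))$, so that $f$ is rational and Euclidean-continuous. Rationality is then immediate. Integrality over $\C[X]$ is exactly the content, established in Subsection \ref{TheRingOfRat}, that $\KO(X(\C))$ is the integral closure of $\C[X]$ in $\Ccal^0(X(\C),\C)$; in particular each continuous rational function is integral over $\C[X]$. The graph of $f$ is Zariski closed by the algebraic version of Whitney's theorem proved above, which says that a continuous function is rational if and only if its graph is Zariski closed. This already settles one direction.

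The substance is the converse, where from $f$ rational, integral over $\C[X]$ and with Zariski closed graph $\Gamma_f$ I must produce Euclidean continuity. I would fix a monic relation $f^n + a_{n-1}f^{n-1} + \dots + a_0 = 0$ with $a_i \in \C[X]$, valid on the dense open set $U$ on which $f$ coincides with a regular function, and introduce the finite cover
$$V = \{\, (x,t) \in X(\C)\times\C \ :\ t^n + a_{n-1}(x)t^{n-1} + \dots + a_0(x) = 0 \,\}.$$
As the defining equation is monic in $t$, the projection $p : V \to X$ is finite, hence proper and closed for both the Zariski and the Euclidean topologies. Since $\Gamma_f|_U \subseteq V$ and $\Gamma_f|_U$ is irreducible (it is the graph over the irreducible $U$), its closure $\overline{\Gamma_f|_U}$ lies in $V$, is irreducible and dominates $X$. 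The key step will be the equality $\Gamma_f = \overline{\Gamma_f|_U}$: the inclusion $\overline{\Gamma_f|_U}\subseteq\Gamma_f$ holds because $\Gamma_f$ is closed, and conversely $\overline{\Gamma_f|_U}\to X$ is finite and dominant, thus surjective, so over any $x_0$ it meets $\Gamma_f$ in a point $(x_0,t_0)$, and injectivity of the everywhere-defined function $f$ forces $t_0 = f(x_0)$. Granting this, $\Gamma_f\subseteq V$ and $p|_{\Gamma_f} : \Gamma_f \to X$ is a finite bijective morphism; finiteness makes it proper for the Euclidean topology, hence a continuous closed bijection, hence a homeomorphism. Its inverse is the section $x\mapsto (x,f(x))$, which is then continuous, and composing with the projection to $\C$ shows that $f$ is continuous, so $f\in\KO(X(\C))$.

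The hard part is precisely the inclusion $\Gamma_f\subseteq V$: a priori the integral relation only constrains $f$ on $U$, and on $X\setminus U$ nothing visibly forces the everywhere-defined $f$ onto a sheet of the cover. It is the Zariski closedness of the graph, together with surjectivity of $\overline{\Gamma_f|_U}\to X$ and the injectivity of $f$, that pins $f$ to the correct branch; this is also the point where all three hypotheses are genuinely needed. A secondary technical point is the transition from a finite bijective morphism to a Euclidean homeomorphism, which relies on the properness of finite morphisms in the transcendental topology. Should $X$ be allowed to be reducible, I would run the same argument componentwise, since $\Gamma_f|_U$ splits along the irreducible components of $X$.
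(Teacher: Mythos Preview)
Your proof is correct, but it diverges from the paper's in the converse implication.

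The paper proceeds as follows. It forms $\C[Y]:=\C[X][t]/\ker(\psi)\simeq\C[X][f]$, where $\psi:\C[X][t]\to\K(X)$ is evaluation at $f$, and obtains a finite birational morphism $\pi:Y\to X$. Since every polynomial in the ideal $I_f$ defining $\Gamma_f$ vanishes at $(x,f(x))$, one has $I_f\subset\ker\psi$, whence $Y(\C)=\Zcal(\ker\psi)\subset\Zcal(I_f)=\Gamma_f$; surjectivity of $\piC$ then forces $Y(\C)=\Gamma_f$ and $\piC$ bijective. At this point the paper does \emph{not} conclude by the Euclidean homeomorphism trick: instead it invokes the universal property of the seminormalization to get $\C[X]\inj\C[Y]\inj\C[X^+]$, and then Theorem~\ref{TheoRatioContEgalPolySN} to produce $g\in\KO(X(\C))$ with $g\circ\piC=t$, from which $g=f$.

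Your route instead embeds $\Gamma_f$ in the hypersurface $V=\Zcal(P)$, and finishes by observing directly (via the remark after Lemma~\ref{LemMorphFiniDoncFerme}) that a finite bijective morphism is a Euclidean homeomorphism, so the section $x\mapsto(x,f(x))$ is continuous. This is more self-contained: it avoids both the universal property and the identification $\KO(X(\C))\simeq\C[X^+]$, and would work even before those results are available. The paper's argument, by contrast, deliberately routes through the main theorem of the section, illustrating how the seminormalization machinery absorbs this kind of statement.

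One small remark: the paper's condition (2) is that $P(f)=0$ holds pointwise on all of $X(\C)$, not merely on the regular locus $U$. Under that reading $\Gamma_f\subseteq V$ is immediate and your closure argument $\Gamma_f=\overline{\Gamma_f|_U}$ is unnecessary (though harmless). Your version effectively proves a slightly stronger statement, with the integral relation only assumed in $\K(X)$.
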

In section \ref{SectionClassicalResults} we reinterpret several classical results about seminormalization in terms of rational continuous functions. In the first subsection, we look at criteria for a variety to be seminormal given by Leahy-Vitulli, Hamann and Swan (see the review \cite{V}). To prove that those criteria are sufficient, we show that if $f$ is an element of $\KO(X(\C))\setminus \C[X]$, then we can always find a function $g\in\C[X][f]\setminus\C[X]$ such that $g^n\in\C[X]$ for all $n\geqslant 2$. To see that they are necessary, we construct explicit continuous rational functions from the relations appearing in the different criteria. The second subsection is dedicated to see what the commutation between the localization and the seminormalization means for continuous rational functions.\\
In section \ref{SectionTheSheafOf} we define the sheaf $\KO_X$ of complex regulous functions and we generalize the main result of this paper by showing that, for an affine variety $X$, the ringed space $(X, \KO_X)$ is isomorphic to the affine scheme $(X^+,\Ocal_{X^+})$. A generalization for general algebraic varieties over a field of characteristic 0 can be found in the forthcoming paper \cite{BFMQ}.

\textbf{Acknowledgement :} This paper is part of Ph.D. Thesis of the author. The author is deeply grateful to G. Fichou and J.-P. Monnier for their precious help.

\section{Universal property of the seminormalization.}\label{SectionUniversalProp}
\vspace{-0.1cm}
We recall in this section the construction of the seminormalization introduced by Traverso \cite{T} for commutative rings. This construction is linked to the notion of subintegrality in the way that the seminormalization of a ring is its biggest subintegral extension.

Let $A$ be a ring, we note $\spec(A) := \{\p \subset A \mid \p \text{ is a prime ideal of }A\}$ the spectrum of $A$ and $\spm(A) := \{\m \subset A \mid \m \text{ is a maximal ideal of }A\}$ the maximal spectrum of $A$. Let $\p \in \spec(A)$, then $A_{\p} := (A\setminus \p)^{-1}A$ is the localization of $A$ at $\p$ and $\kappa(\p) := A_{\p}/\p A_{\p}$ the residue field of $\p$.

Since the seminormalization is defined for integral extensions, we recall this notion here.

\begin{definition}
Let $A\inj B$ be an extension of rings.
\begin{enumerate}
    \item An element $b\in B$ is \textit{integral} over $A$ if there exists a monic polynomial $P\in A[X]$ such that $P(b)=0$.
    \item We call \textit{integral closure} of $A$ in $B$ and we write $A'_B$ the ring defined by $$A'_B := \{ b\in B \mid b \text{ integral on }A\}$$
    \item \vspace{-0.2cm}The extension $A\inj B$ is integral if $A'_B = B$.
\end{enumerate}\vspace{-0.4cm}
\end{definition}
\vspace{-0.1cm}
Now we define the seminormalization of a ring in an integral extension. The idea behind this definition is to \textit{glue} the prime ideals of $B$ above those of $A$. If one thinks of it in terms of algebraic varieties, it consists of gluing points in the fibers together.

\begin{definition}
Let $A \inj B$ be an integral extension of rings. We define

$$A_B^{+} := \{b \in B \mid \forall \p \in \spec(A) \text{, } b_{\p} \in A_{\p}+\Rrm(B_{\p})\}$$

where $\Rrm(B_{\p}) :=  \displaystyle{\bigcap_{\m\in\spm(B_{\p})}\m }$ is the Jacobson radical of $B_{\p}$.

We say that $A_B^+$ is the \textit{seminormalization} of $A$ in $B$. If $A = A_B^+$, then $A$ is said to be \textit{seminormal} in $B$.
\end{definition}

We introduce now the notion of subintegral extension which is strongly related with that of seminormalization.

\begin{definition}
An integral extension of rings $A \inj B$ is called \textit{subintegral} if the two following conditions hold :
\begin{enumerate}
    \item The induced map $\spec(B) \to \spec(A)$ is bijective.
    \item For all $\p\in \spec(A)$ and $\q \in \spec(B)$ with $\q\cap A = \p$, the induced map on the residue fields $\kappa(\p) \inj \kappa(\q)$ is an isomorphism.
\end{enumerate}

When the second condition holds, we say that $A\inj B$ is \textit{equiresidual}.
\end{definition}

The following statement gives the link between the two last definitons. It gives us a universal property of the seminormalization : the seminormalization of a ring in another one as its biggest subintegral subextension.

\vspace{-0.5cm}
\begin{prop}\label{PropPU}
Let $A\inj C\inj B$ be integral extensions of rings. Then the following statements are equivalent :
\begin{enumerate}
    \item[1)] The extension $A \inj C$ is subintegral.
    \item[2)] The image of $C\inj B$ is a subring of $A^+_B$.
\end{enumerate}
\end{prop}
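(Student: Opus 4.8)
The plan is to test the membership $C \subseteq A^+_B$ one prime at a time and to translate it into a purely local statement about the chain $A_\p \inj C_\p \inj B_\p$, where for $\p\in\spec(A)$ I write $C_\p := (A\setminus\p)^{-1}C$ and $B_\p := (A\setminus\p)^{-1}B$ for the localizations at the multiplicative set $A\setminus\p$; these stay integral extensions. The one external ingredient I would isolate first is an elementary fact about an integral extension $R\inj S$: since the contraction of a maximal ideal is maximal and (by lying-over and incomparability) every maximal ideal of $R$ is the contraction of a maximal ideal of $S$, one gets $\Rrm(S)\cap R = \Rrm(R)$. Applied to $C_\p\inj B_\p$ this yields $\Rrm(B_\p)\cap C_\p = \Rrm(C_\p)$, and applied to $A_\p\inj C_\p$ it yields $\Rrm(C_\p)\cap A_\p = \Rrm(A_\p) = \p A_\p$, the last equality because $A_\p$ is local.

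Using this I would rewrite the defining condition of $A^+_B$. For $c\in C$ the element $c_\p$ lies in $C_\p$, so $c_\p\in A_\p+\Rrm(B_\p)$ is equivalent to $c_\p\in C_\p\cap(A_\p+\Rrm(B_\p))$. Since $A_\p\subseteq C_\p$, the modular law gives $C_\p\cap(A_\p+\Rrm(B_\p)) = A_\p+(C_\p\cap\Rrm(B_\p)) = A_\p+\Rrm(C_\p)$ by the lemma. Hence $C\subseteq A^+_B$ if and only if $C_\p = A_\p+\Rrm(C_\p)$ for every $\p\in\spec(A)$. This is the bridge: it eliminates $B$ from the discussion entirely and reduces the statement to comparing $A$ and $C$ locally.

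It then remains to match the identity $C_\p = A_\p+\Rrm(C_\p)$ with subintegrality, prime by prime. Quotienting by $\Rrm(C_\p)$ and using $\Rrm(C_\p)\cap A_\p = \p A_\p$, the identity says exactly that $C_\p/\Rrm(C_\p)\cong A_\p/\p A_\p = \kappa(\p)$; as a field has a unique maximal ideal and the maximal ideals of $C_\p$ correspond to those of $C_\p/\Rrm(C_\p)$, this forces $C_\p$ to be local with residue field $\kappa(\p)$. Translating back through the identification of the fibre of $\spec(C)\to\spec(A)$ over $\p$ with $\spm(C_\p)$ — legitimate because incomparability and going-up show the maximal ideals of $C_\p$ are precisely the primes of $C$ lying over $\p$ — this local condition is equivalent to: there is a unique prime of $C$ over $\p$, with residue field $\kappa(\p)$. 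Ranging over all $\p$ and combining with surjectivity of $\spec(C)\to\spec(A)$ (lying-over) gives exactly bijectivity plus equiresiduality, i.e. subintegrality of $A\inj C$. The implication $1)\Rightarrow 2)$ is read off the same dictionary: subintegrality makes $C_\p$ local with maximal ideal $\q C_\p = \Rrm(C_\p)$ and residue field $\kappa(\q)\cong\kappa(\p)$, whence $C_\p = A_\p+\Rrm(C_\p)$.

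The routine steps are the modular-law manipulation and the quotient computation. The part needing the most care — and where I would most fear a gap — is the radical lemma together with the identification of the fibre with $\spm(C_\p)$, since both rest on the package of lying-over, going-up and incomparability for integral extensions, and on verifying that $(A\setminus\p)^{-1}C$ is local precisely when a single prime of $C$ sits over $\p$. Once these are in place, everything else is bookkeeping.
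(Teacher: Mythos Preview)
Your proof is correct. The forward direction $1)\Rightarrow 2)$ is essentially the paper's argument: from subintegrality you get that $C_\p$ is local with residue field $\kappa(\p)$, hence $C_\p = A_\p + \Rrm(C_\p)$, and then $\Rrm(C_\p) = \Rrm(B_\p)\cap C_\p \subseteq \Rrm(B_\p)$ finishes. The paper phrases this as a diagram chase producing an element $a/s\in A_\p$ with $c_\p - a/s \in \q C_\p = \Rrm(C_\p)$, but the content is identical.

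Where you genuinely diverge is in $2)\Rightarrow 1)$. The paper does not reverse the local argument; instead it first proves as a standalone result that $A\inj A^+_B$ is always subintegral (its Proposition~\ref{PropSnSub}), then invokes a transitivity lemma (if $A\inj A^+_B$ is subintegral and $A\inj C\inj A^+_B$, then $A\inj C$ is subintegral). Your modular-law reduction $C_\p\cap(A_\p+\Rrm(B_\p)) = A_\p+\Rrm(C_\p)$ eliminates $B$ from the picture and turns the statement into a clean biconditional $C\subseteq A^+_B \iff C_\p = A_\p+\Rrm(C_\p)$ for all $\p$, which you then match directly with subintegrality. This is more self-contained and symmetric; in fact Proposition~\ref{PropSnSub} becomes a corollary of your argument (take $C=A^+_B$) rather than an input to it. The paper's route has the advantage of isolating $A\inj A^+_B$ subintegral as a reusable fact, but at the cost of a longer detour.
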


Before proving this, we give several lemmas. First, we recall some classical algebraic results. Those are crucial to make things work and they are the reason we consider integral extensions.

\begin{lemma}[\cite{AM}]\label{LemGoingUp}
Let $A\inj B$ be an integral extension of rings. Then
\begin{enumerate}
    \item (\textit{Lying-Over} property) The map $\spec(B) \to \spec(A)$ is surjective.
    \item Let $\p\in \spec(A)$ and $\q\in \spec(B)$ such that $\q\cap A = \p$. Then $\q\in \spm(B)$ if and only if $\p\in\spm(A)$. 
    \item (\textit{Going-up} property) Let $\p \subseteq \p'\in \spec(A)$ and $\q \in \spec(B)$ such that $\q\cap A = \p$. Then there exists $\q \subseteq \q'\in \spec(A)$ such that $\q' \cap A = \p'$.
    \item Let $S$ be any multiplicative subset of $A$, then $S^{-1}A \inj S^{-1}B$ is integral.
\end{enumerate}
\end{lemma}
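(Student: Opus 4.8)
The plan is to establish these four classical assertions in the order that lets each follow from the previous, beginning with the most elementary. First I would dispatch assertion (4): given $b/s\in S^{-1}B$ together with a monic integral relation $b^n+a_{n-1}b^{n-1}+\cdots+a_0=0$ over $A$, dividing through by $s^n$ produces the monic relation $(b/s)^n+(a_{n-1}/s)(b/s)^{n-1}+\cdots+a_0/s^n=0$ over $S^{-1}A$, so every element of $S^{-1}B$ is integral over $S^{-1}A$.

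The genuine content sits in a single lemma about integral extensions of domains: if $A\inj B$ is integral and both rings are integral domains, then $A$ is a field if and only if $B$ is a field. This I would prove by the classical trick of exploiting a minimal monic relation. If $A$ is a field and $0\neq b\in B$, I pick a monic relation of least degree; its constant term is nonzero, since otherwise one could factor out $b$ and contradict minimality ($B$ being a domain), and solving the relation for that constant term exhibits $b^{-1}$ as a polynomial in $b$ with coefficients in $A$. Conversely, if $B$ is a field and $0\neq a\in A$, then $a^{-1}\in B$ is integral over $A$, and multiplying its integral relation through by a suitable power of $a$ rewrites $a^{-1}$ as an element of $A$. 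This is the step I expect to be the crux, since it is the only place a real idea is needed rather than a formal reduction.

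From this lemma, assertion (2) is immediate by passing to quotients. For $\q\in\spec(B)$ and $\p=\q\cap A$, the induced map $A/\p\inj B/\q$ is an integral extension of domains (monic relations descend modulo $\q$), so by the lemma $A/\p$ is a field exactly when $B/\q$ is, that is, $\p$ is maximal exactly when $\q$ is. For assertion (1) I would localize. Writing $S=A\setminus\p$, assertion (4) gives that $A_{\p}\inj S^{-1}B$ is integral, and it is injective with $A_{\p}\neq 0$, so $S^{-1}B\neq 0$ and admits a maximal ideal $\mathfrak{n}$. Applying assertion (2) to this extension, $\mathfrak{n}\cap A_{\p}$ is maximal in the local ring $A_{\p}$, hence equals its unique maximal ideal $\p A_{\p}$. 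Contracting $\mathfrak{n}$ along $B\to S^{-1}B$ yields a prime $\q$ of $B$, and chasing the commuting square of localization maps gives $\q\cap A=\p$; as $\p$ was arbitrary this proves surjectivity.

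Finally, assertion (3) follows from (1) by the same quotient trick. Given $\p\subseteq\p'$ in $\spec(A)$ and $\q\in\spec(B)$ with $\q\cap A=\p$, the extension $A/\p\inj B/\q$ is integral and injective, so Lying-Over applied to it produces a prime of $B/\q$ lying over $\p'/\p$; its preimage in $B$ is the desired $\q'\supseteq\q$ with $\q'\cap A=\p'$. The only points demanding care throughout are keeping the localization and quotient squares commutative, so that the contractions of the chosen maximal or prime ideals land on the intended primes of $A$.
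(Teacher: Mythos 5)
Your proof is correct, but note that the paper offers no proof of this lemma at all: it is stated as a classical result and cited directly to Atiyah--MacDonald. Your argument (the field-iff-field lemma for integral extensions of domains, then quotient reductions for maximality and going-up, and localization for lying-over) is precisely the standard proof in that cited reference, so there is nothing to reconcile between the two.
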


Now see that a subextension of a subintegral extension is necessarily subintegral.
\begin{lemma}\label{LemTransSub}
Let  $A \inj C \inj B$ be integral extensions of rings. Then the following properties are equivalent 
\begin{enumerate}
    \item[1)] The extension $A\inj B$ is subintegral.
    \item[2)] The extensions $A\inj C$ and $C\inj B$ are subintegral.
\end{enumerate}
\end{lemma}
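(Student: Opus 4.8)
The plan is to unwind the definition of subintegrality into its two constituent conditions — bijectivity of the induced map on spectra and the equiresidual condition on residue fields — and to treat each separately, exploiting the functoriality of these constructions under composition of ring extensions. Throughout, write $f : \spec(C) \to \spec(A)$ and $g : \spec(B) \to \spec(C)$ for the maps induced by $A\inj C$ and $C\inj B$, so that the map induced by $A\inj B$ is the composite $f\circ g$. Likewise, for a chain $\q \in \spec(B)$, $\mathfrak{r} := \q\cap C$, $\p := \q\cap A$, the residue field maps factor as $\kappa(\p)\inj\kappa(\mathfrak{r})\inj\kappa(\q)$, whose composite is the residue map attached to $A\inj B$.

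The implication $2)\Rightarrow 1)$ is then immediate: if $f$ and $g$ are bijective, so is $f\circ g$; and if each of $\kappa(\p)\inj\kappa(\mathfrak{r})$ and $\kappa(\mathfrak{r})\inj\kappa(\q)$ is an isomorphism, so is their composite. Hence $A\inj B$ is subintegral.

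For $1)\Rightarrow 2)$ I would first handle the spectral maps. Since $A\inj C$ and $C\inj B$ are integral, Lemma \ref{LemGoingUp}(1) (Lying-Over) guarantees that $f$ and $g$ are both surjective. By hypothesis $f\circ g$ is bijective, hence injective, which forces $g$ to be injective; combined with its surjectivity this makes $g$ bijective. Then $f = (f\circ g)\circ g^{-1}$ is a composite of bijections and is therefore bijective as well. The key point is that bijectivity of a composite alone yields only injectivity of the inner map and surjectivity of the outer one; it is precisely the surjectivity supplied by Lying-Over for integral extensions that upgrades this to bijectivity of each factor.

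It remains to treat the residue fields. Fix $\q$, $\mathfrak{r}$, $\p$ as above. Each of $\kappa(\p)\inj\kappa(\mathfrak{r})$ and $\kappa(\mathfrak{r})\inj\kappa(\q)$ is a homomorphism of fields, hence injective, and by hypothesis the composite $\kappa(\p)\inj\kappa(\q)$ is an isomorphism, in particular surjective. Surjectivity of the composite forces $\kappa(\mathfrak{r})\inj\kappa(\q)$ to be surjective, so, being an injective field map, it is an isomorphism; then $\kappa(\p)\inj\kappa(\mathfrak{r})$ equals the composite of the isomorphism $\kappa(\p)\inj\kappa(\q)$ with the inverse isomorphism $\kappa(\q)\xrightarrow{\sim}\kappa(\mathfrak{r})$, and is thus itself an isomorphism. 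As $\q$ was arbitrary, both $A\inj C$ and $C\inj B$ are equiresidual, and with the previous paragraph this shows both are subintegral. The argument is entirely formal once Lying-Over is available, so I expect no serious obstacle beyond taking care that surjectivity — not merely injectivity — is in hand at each factorization step.
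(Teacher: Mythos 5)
Your proposal is correct and follows essentially the same route as the paper: both directions rest on Lying-Over for surjectivity of the spectral maps, the formal transfer of injectivity/surjectivity through a composite, and the observation that field homomorphisms are injective so that a surjective composite forces both factors of the residue-field maps to be isomorphisms. The paper merely unwinds your identity $f=(f\circ g)\circ g^{-1}$ by explicitly lifting primes of $C$ to $B$, which is the same argument in different clothing.
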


\begin{proof}
Let us prove that 1) implies 2). We start by showing the bijection between spectra. First, see that the induced maps between $\spec(C) \to \spec(A)$ and $\spec(B) \to \spec(C)$ are surjective because the extensions are integral ( Lemma \ref{LemGoingUp}, 2) ). Now let $\p\in \spec(C)$, suppose that there exists $\q_1,\q_2 \in \spec(B)$ such that $\q_1\cap C = \q_2 \cap C = \p$.
In that case $\q_1 \cap A = \q_1 \cap C \cap A = \p \cap A$ and the same is true for $\q_2$. Since $A\inj B$ is subintegral, we have $$\q_1 \cap A = \q_2 \cap A \implies \q_1 = \q_2$$ This shows that  $\spec(B) \to \spec(C)$ is bijective. We now suppose that there exists $\p\in \spec(A)$ and $\p_1,\p_2 \in \spec(C)$ such that $\p_1 \cap A = \p_2 \cap A = \p$. By what we've just shown, we can consider some unique $\q_1,\q_2 \in \spec(B)$ such that $\q_1 \cap C = \p_1$ and $\q_2 \cap C = \p_2$. Then $\q_1 \cap A = \q_2 \cap A = \p$, so $\q_1 = \q_2$ and finally $\p_1 = \p_2$. It shows that $\spec(C) \to \spec(A)$ is bijective.\\
We now show the isomorphisms on the residue fields. Let's consider the following commutative diagram
$$\xymatrix{
   A \hspace{0.1cm} \ar@{^{(}->}[r]^{i_1} \ar@{ ->>}[d]^{\pi} & C \ar@{^{(}->}[r]^{i_2} \ar@{ ->>}[d]^{\pi'} & B \ar@{ ->>}[d]^{\pi''}\\
    A\diagup \p \hspace{0.1cm} \ar@{^{(}->}[r]^{f_1} \ar@{^{(}->}[d] & C\diagup \p' \ar@{^{(}->}[r]^{f_2} \ar@{^{(}->}[d] & B \diagup \p'' \ar@{^{(}->}[d]\\
    \kappa(\p) \hspace{0.1cm} \ar@{^{(}->}[r] & \kappa(\p') \ar@{^{(}->}[r] & \kappa(\p'') 
}$$
The diagram is indeed commutative because the $f_i$ are obtained by the universal property of quotient and their injectivity comes, for example with $f_1$, from the following equality : $$\ker(\pi' \circ i_1) = \{ a\in A \text{ such that } i(a) \in \p'\} = \p' \cap A = \p$$
By hypothesis, $A \inj B$ is subintegral, so $\kappa(\p) \to \kappa(\p'')$ is an isomorphism. This implies that $\kappa(\p) \to \kappa(\p')$ and  $\kappa(\p') \to \kappa(\p'')$ are also isomorphisms. The first implication is proved.

We now show that 2) implies 1). Let's suppose that $A\inj C$ and $C \inj B$ are subintegral. Let $\p\in \spec(A)$, then there exists a unique element $\p'\in\spec(C)$ above $\p$. In the same way, there exists a unique element $\p'' \in \spec(B)$ above $\p'$, so $\p''$ is the unique element of $\spec(B)$ above $\p$. The fact that we have $\kappa(\p') \simeq \kappa(\p)$ and $\kappa(\p'') \simeq \kappa(\p')$ gives us the equiresiduality. We have shown that $A\inj B$ is subintegral.
\end{proof}

The three following classical results brings a better understanding of the Jacobson radical in the definition of the seminormalization.
\begin{lemma}\label{LemIntersecLoc}
Let $A\inj B$ be an extension of rings, $\p\in \spec(A)$ and $\q\in \spec(B)$. Then $$ \q\cap A = \p \iff \q B_{\p} \cap A_{\p} = \p A_{\p} $$
\end{lemma}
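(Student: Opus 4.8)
The plan is to reduce everything to the multiplicative set $S := A\setminus\p$, for which $A_\p = S^{-1}A$ and $B_\p = S^{-1}B$, and to observe that the extended ideal $\q B_\p$ is exactly $S^{-1}\q$. The whole equivalence then rests on a single identity relating the contraction of $S^{-1}\q$ to $A_\p$ with the contraction of $\q$ to $A$, namely
$$S^{-1}\q \cap A_\p = S^{-1}(\q\cap A).$$
First I would establish this identity by a double inclusion. The inclusion $\supseteq$ is immediate, since any $x\in\q\cap A$ gives $x/s$ lying in both $S^{-1}\q$ and $S^{-1}A = A_\p$. For the inclusion $\subseteq$, I would take $y = a/s\in A_\p$ (with $a\in A$, $s\in S$) that also equals $q/t\in S^{-1}\q$ for some $q\in\q$ and $t\in S$; clearing denominators produces $u\in S$ with $u(ta-sq)=0$ in $B$, whence $uta = usq\in\q$. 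As $uta$ also lies in $A$, it belongs to $\q\cap A$, and then $y = (uta)/(uts)\in S^{-1}(\q\cap A)$ since $uts\in S$.

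Granting this identity, the forward implication is essentially free: if $\q\cap A = \p$, then $S^{-1}\q\cap A_\p = S^{-1}\p = \p A_\p$, which is precisely the asserted equality on the right.

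For the converse I would argue back through the localization. Setting $\p' := \q\cap A$, which is a prime of $A$ because it is the contraction of the prime $\q$, the hypothesis together with the identity gives $S^{-1}\p' = \p A_\p = S^{-1}\p$. Since $S^{-1}\p'$ equals the proper ideal $\p A_\p$, it contains no unit, so $\p'$ must be disjoint from $S$, i.e. $\p'\subseteq\p$; the same holds trivially for $\p$. Contracting the equality $S^{-1}\p' = S^{-1}\p$ back to $A$, and using that for a prime disjoint from $S$ this contraction recovers the prime itself, one concludes $\p' = \p$, that is $\q\cap A = \p$.

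I expect the only delicate point to be the converse direction, where one must ensure that the equality of extended ideals inside $A_\p$ genuinely descends to an equality inside $A$. This relies on the order-preserving bijection between primes of $A$ disjoint from $S$ and primes of $A_\p$, and the seemingly harmless verification that $\q\cap A$ is disjoint from $S$ (equivalently, contained in $\p$) is exactly what legitimizes the descent. Everything else reduces to the routine localization computation carried out in the proof of the key identity.
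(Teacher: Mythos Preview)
Your argument is correct. The paper's own proof (present in the source but commented out in the final version) proceeds by direct element-chasing: for each of the four inclusions it picks an element, writes it as a fraction, and manipulates the numerator. Your route is more structural: you first isolate the single identity $S^{-1}\q \cap A_\p = S^{-1}(\q\cap A)$ and then deduce both implications from it, invoking the standard bijection between primes of $A$ disjoint from $S$ and primes of $A_\p$ for the converse. This has the merit of making explicit the one subtle point---that the hypothesis $\q B_\p\cap A_\p = \p A_\p$ forces $\q\cap A$ to miss $S$---whereas the paper's direct argument simply asserts ``since $s\notin\q$'' without pausing to justify why an element of $A\setminus\p$ must avoid $\q$. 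The paper's approach is shorter and more hands-on; yours packages the computation once and then lets the prime correspondence do the descent.
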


\begin{lemma}\label{LemMaxLoc}
Let $A \inj B$ be an integral extension and $\p \in \spec(A)$. Then $$\spm(B_{\p}) = \{\q B_{\p} \mid \q\in\spec(B) \text{ and }\q\cap A = \p \}$$
\end{lemma}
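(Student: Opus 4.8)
The plan is to reduce to the integral extension $A_\p \inj B_\p$ and use the fact that, for integral extensions, maximality of a prime is detected by its contraction, combined with the localness of $A_\p$.

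First I would recall the standard bijection between the prime ideals of the localization $B_\p = (A\setminus\p)^{-1}B$ and the prime ideals $\q$ of $B$ that are disjoint from the multiplicative set $A\setminus\p$, given by $\q \mapsto \q B_\p$ (with inverse the contraction along $B \to B_\p$). The disjointness condition $\q \cap (A\setminus\p) = \emptyset$ is precisely $\q \cap A \subseteq \p$. Hence every prime of $B_\p$ is of the form $\q B_\p$ for such a $\q$, and the task is to identify which of these are maximal.

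Next, by Lemma \ref{LemGoingUp} (4) the extension $A_\p \inj B_\p$ is again integral. Applying Lemma \ref{LemGoingUp} (2) to this extension, a prime $\q B_\p$ of $B_\p$ is maximal if and only if its contraction $\q B_\p \cap A_\p$ is maximal in $A_\p$. Since $A_\p$ is local with unique maximal ideal $\p A_\p$, this is equivalent to the equality $\q B_\p \cap A_\p = \p A_\p$, which by Lemma \ref{LemIntersecLoc} holds if and only if $\q \cap A = \p$.

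Combining these equivalences yields both inclusions. If $\M \in \spm(B_\p)$, then writing $\M = \q B_\p$ with $\q = \M \cap B$ gives $\q \cap A = \p$; conversely, if $\q \cap A = \p$ then $\q$ meets $A\setminus\p$ trivially, so $\q B_\p$ is a genuine (proper) prime of $B_\p$, and the criterion above shows it is maximal. I do not anticipate a serious obstacle: the only delicate point is the bookkeeping of the localization correspondence, in particular verifying that $\q \cap A = \p$ forces $\q \cap (A\setminus\p)=\emptyset$ so that $\q B_\p$ is proper; the rest is a direct application of Lemmas \ref{LemGoingUp} and \ref{LemIntersecLoc}.
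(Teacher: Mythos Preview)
Your proposal is correct and follows essentially the same route as the paper's own (commented-out) proof: describe $\spec(B_\p)$ via the localization correspondence, pass to the integral extension $A_\p \inj B_\p$ via Lemma~\ref{LemGoingUp}(4), use Lemma~\ref{LemGoingUp}(2) together with the localness of $A_\p$ to characterize maximality, and translate back through Lemma~\ref{LemIntersecLoc}. The only cosmetic difference is that you spell out the two inclusions separately at the end, whereas the paper phrases everything as a chain of equivalences.
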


\begin{rmq}
If $A\inj B$ is an integral extension and $\p\in \spec(A)$, then  $$\Rrm(B_{\p}) = \displaystyle{\bigcap_{\m\in\spm(B_{\p})}\m = \bigcap_{ \q\in \spec(B) , \q\cap A = \p } \q B_{\p} } $$
\end{rmq}

\begin{lemma}\label{LemJacobsonIntersec}
Let $A\inj B$ be an integral extension of rings. Then
$$\Rrm(A) = \Rrm(B)\cap A $$
\end{lemma}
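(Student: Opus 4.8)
The plan is to prove the equality by a double inclusion, rewriting the Jacobson radical as the intersection of all maximal ideals and leaning on the two facts about integral extensions recorded in Lemma \ref{LemGoingUp}: the Lying-Over property and the correspondence between maximal ideals of $A$ and the primes of $B$ lying above them. Both inclusions are checked elementwise, by picking an arbitrary maximal ideal on the appropriate side.

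First I would show $\Rrm(B)\cap A \subseteq \Rrm(A)$. Take $x\in \Rrm(B)\cap A$ and let $\m\in\spm(A)$ be arbitrary. By the Lying-Over property (Lemma \ref{LemGoingUp}, 1)) there exists $\q\in\spec(B)$ with $\q\cap A = \m$, and by Lemma \ref{LemGoingUp}, 2) this $\q$ is automatically maximal, since $\m$ is. As $x$ lies in every maximal ideal of $B$, we get $x\in\q$, hence $x\in\q\cap A = \m$. Since $\m$ was arbitrary, $x\in\Rrm(A)$.

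For the reverse inclusion $\Rrm(A)\subseteq \Rrm(B)\cap A$, take $x\in\Rrm(A)$, so in particular $x\in A$, and let $\M\in\spm(B)$ be arbitrary. Put $\p = \M\cap A$; by Lemma \ref{LemGoingUp}, 2) again, $\p$ is maximal in $A$. Then $x\in\Rrm(A)\subseteq \p \subseteq \M$, and as this holds for every maximal ideal $\M$ of $B$ we conclude $x\in\Rrm(B)$, whence $x\in\Rrm(B)\cap A$.

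The argument is essentially immediate once Lemma \ref{LemGoingUp} is available; the only step that genuinely uses integrality — and the one I would be careful not to skip — is the appeal to Lying-Over in the first inclusion, which guarantees that \emph{every} maximal ideal of $A$ really is contracted from a maximal ideal of $B$. Without surjectivity of $\spec(B)\to\spec(A)$, some maximal ideal of $A$ could be missed entirely by the contraction map, and the inclusion $\Rrm(B)\cap A \subseteq \Rrm(A)$ would fail.
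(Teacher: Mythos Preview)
Your proof is correct and follows essentially the same approach as the paper's (commented-out) argument: both rely on Lemma~\ref{LemGoingUp} to ensure that the contraction map $\spm(B)\to\spm(A)$ is well-defined and surjective. The paper compresses this into a single chain of equalities $\Rrm(B)\cap A = \bigcap_{\m\in\spm(B)}(\m\cap A) = \bigcap_{\m'\in\spm(A)}\m' = \Rrm(A)$, whereas you unpack the same reasoning into an elementwise double inclusion, but the underlying content is identical.
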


Now, we have all the tools to demonstrate that the seminormalization of a ring in another one is its biggest subintegral subextension. But before that, we must check that the seminormalization gives a subintegral extension.
\begin{prop}\label{PropSnSub}
Let $A \inj B$ be an integral extension of rings. Then
$$A \inj A^+_B\text{ is subintegral}$$
\end{prop}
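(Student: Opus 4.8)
The plan is to prove that $A \inj A^+_B$ is subintegral by establishing the two defining conditions directly: bijectivity of the induced spectrum map, and equiresiduality. Since $A^+_B$ sits inside the integral extension $B$, the extension $A \inj A^+_B$ is itself integral, so the Lying-Over and Going-up properties of Lemma \ref{LemGoingUp} apply throughout. In particular, $\spec(A^+_B) \to \spec(A)$ is automatically surjective, and the real content is to show it is injective and that the residue field extensions are isomorphisms.

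The central idea is that everything localizes well. First I would reduce the whole problem to a statement over each local ring: for a fixed $\p \in \spec(A)$, the primes of $A^+_B$ lying over $\p$ correspond (via Lemma \ref{LemMaxLoc} and its analogue) to the maximal ideals of $(A^+_B)_\p$, and by construction we should have $(A^+_B)_\p \subseteq A_\p + \Rrm(B_\p)$. The crucial computation is therefore to understand $(A^+_B)_\p$, and I expect the key technical point to be that localization commutes suitably with the seminormalization operation, so that an element of $(A^+_B)_\p$ is of the form $a/1 + r$ with $a \in A_\p$ and $r \in \Rrm(B_\p)$. Granting this, every element of $(A^+_B)_\p$ is congruent modulo the Jacobson radical $\Rrm(B_\p)$ to an element of the image of $A_\p$.

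From this local description the two conditions fall out. For injectivity of $\spec(A^+_B) \to \spec(A)$, suppose $\q_1, \q_2 \in \spec(A^+_B)$ both contract to $\p$; after localizing at $\p$ they become maximal ideals of $(A^+_B)_\p$, hence both contain $\Rrm((A^+_B)_\p)$, and since modulo the radical the ring $(A^+_B)_\p$ is generated by the single local image of $A_\p$, the quotient $(A^+_B)_\p / \Rrm$ is a local ring (a field, in fact, being a quotient of the local ring $A_\p$ by the image of $\p A_\p$), forcing $\q_1 = \q_2$. For equiresiduality, fix $\q \in \spec(A^+_B)$ with $\q \cap A = \p$ and consider $\kappa(\p) \inj \kappa(\q)$. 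Using that every element of $(A^+_B)_\p$ equals an element of $A_\p$ modulo $\Rrm(B_\p) \supseteq$ the radical contribution landing in $\q$, I would show each residue class in $\kappa(\q)$ is already represented by the image of an element of $A_\p$, so the inclusion of residue fields is surjective, hence an isomorphism.

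The main obstacle I anticipate is precisely the interchange of localization and seminormalization, that is, controlling $(A^+_B)_\p$ and verifying the decomposition $a/1 + r$; this requires care because $A^+_B$ is defined by a condition quantified over \emph{all} primes of $A$, and one must check that passing to the localization at a single $\p$ does not lose or introduce elements. The cleanest route is likely to show $(A^+_B)_\p \subseteq (A_\p)^+_{B_\p}$ using Lemma \ref{LemGoingUp}(4) to know $A_\p \inj B_\p$ is integral, together with the radical identities recorded in Lemma \ref{LemJacobsonIntersec} and the Remark identifying $\Rrm(B_\p)$ as the intersection of the $\q B_\p$ over primes $\q$ contracting to $\p$. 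Once the local picture is pinned down, the verification of bijectivity and equiresiduality is a direct unwinding of the definitions.
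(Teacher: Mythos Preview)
Your proposal is correct and follows essentially the same route as the paper: localize at $\p$, use the decomposition $b_\p = \alpha + \beta \in A_\p + \Rrm(B_\p)$, and deduce both injectivity of the spectrum map and surjectivity of the residue field inclusion from it. Your framing of injectivity via ``$(A^+_B)_\p/\Rrm$ is local'' is a slight repackaging of the paper's direct contradiction argument, but the content is the same. One remark: your anticipated obstacle is milder than you fear---you do not need the full commutation $(A^+_B)_\p = (A_\p)^+_{B_\p}$, only the containment $(A^+_B)_\p \subseteq A_\p + \Rrm(B_\p)$, which is immediate since denominators from $A\setminus\p$ are units in $B_\p$ and preserve both summands.
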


\begin{proof}
We recall that $A^+_B = \{ b\in B\text{ | }\forall \p\in \spec(A) \text{ } b_{\p}\in A_{\p} + \Rrm(B_{\p}) \} \subset B$. To clarify the proof, we will use the following rating abuse : $A^+ = A^+_B$ and $A^+_{\p} = (A^+)_{\p}$.\\
Let's start by showing the bijection between spectra. In fact, we just have to show that $\spec(A^+) \to \spec(A)$ is injective since the extension is integral. Let $\p \in \spec(A)$ and $\q_1,\q_2 \in \spec(A^+)$ such that $\q_1 \cap A = \q_2 \cap A = \p$. Suppose that there exists $b\in \q_1\cap \q_2^c$ and write $b_{\p} = \alpha + \beta \in A_{\p} + \Rrm(B_{\p})$. Then $\alpha = b_{\p} - \beta \in \q_1A^+_{\p} \cap A_{\p}$. But, by Lemma \ref{LemIntersecLoc}, we have $\q_1A^+_{\p}\cap A_{\p} = \p A_{\p} = \q_2A^+_{\p}\cap A_{\p}$ and thus $\alpha \in \q_2A^+_{\p}$. Moreover $\beta = b_{\p} - \alpha \in A^+_{\p}$ and $\beta \in \Rrm(B_{\p})$. So, by Lemma \ref{LemJacobsonIntersec}, we get $\beta \in \Rrm(B_{\p}) \cap A^+_{\p} = \Rrm(A^+_{\p}) \subset \q_2A^+_{\p}$. It gives $b_{\p} \in \q_2A^+_{\p}$ which is not possible by hypothesis on $b$. Since $\q_1$ and $\q_2$ can be inverted in the proof, we obtain $\q_1 = \q_2$ and so $\spec(A^+) \to \spec(A)$ is bijective.

We show now the equiresiduality. Let $\p \in \spec(A)$ and $\q\in \spec(A^+)$ such that $\q\cap A = \p$. Let $\q'\in \spec(B)$ such that $\q'\cap A=\p$. We can see that $\q'\cap A^+ = \q$ because $\q'\cap A^+$ is a prime ideal above $\p$ and $\spec(A^+) \to \spec(A)$ is bijective. We obtain the following commutative diagram :
$$\xymatrix{
	A_{} \ar@{^{(}->}[r] \ar@{^{(}->}[d]  & A^+ \ar@{^{(}->}[r] \ar@{^{(}->}[d] & B \ar@{^{(}->}[d]\\
	A_{\p} \ar@{^{(}->}[r] \ar@{->>}[d]^{\pi}  &  A^+_{\p} \ar@{^{(}->}[r] \ar@{->>}[d]^{\pi^+} & B_{\p} \ar@{->>}[d]^{\pi'} \\
	A_{\p} / \p A_{\p} \ar@{^{(}->}[r]^f \ar@{=}[d] & A^+_{\p} / \q A^+_{\p} \ar@{^{(}->}[r] \ar@{^{(}->}[d] & B_{\p} / \q'B_{\p} \ar@{^{(}->}[d] \\
	A_{\p} / \p A_{\p} \ar@{^{(}->}[r] & A^+_{\q} / \q A^+_{\q} \ar@{^{(}->}[r] & B_{\q'} / \q B_{\q'} }$$

We want to show that $f$ is surjective. Let $\pi^+(\frac{b}{s}) \in A^+_{\p}/\q A^+_{\p}$. Then $b\in A^+$ so there exists $\alpha \in A_{\p}$ and $\beta \in \Rrm(B_{\p})$ such that $b_{\p} = \alpha+\beta$. The remark under Proposition \ref{LemMaxLoc} tells us that $\Rrm(B_{\p}) \subset \q'B_{\p}$ so $\pi'(b_{\p}) = \pi'(\alpha)$ and since the diagram is commutative, we get $\pi^+(b_{\p}) = \pi^+(\alpha) = f\circ \pi(\alpha)$. Finally we obtain
$$f\left(\frac{\pi(\alpha)}{\pi(s)}\right) = \frac{\pi^+(b_{\p})}{\pi^+(s)} = \pi^+\left(\frac{b}{s}\right)$$
which proves the surjectivity of $f$ and gives us $A_{\p}/\p A_{\p} \simeq A^+_{\p} / \q A^+_{\p}$.\\

Finally one can see that $A^+_{\p}/\q A^+_{\p} \simeq A^+_{\q}/\q A^+_{\q}$ because
$$A^+_{\q}/\q A^+_{\q} \simeq \fracloc(A^+/\q)\text{ and }A^+/\q \inj A^+_{\p}/\q A^+_{\p} \implies A^+_{\q}/\q A^+_{\q} \inj A^+_{\p}/\q A^+_{\p}$$

This gives us the result
$$\kappa(\p) \simeq A_{\p}/\p A_{\p} \simeq A^+_{\q}/\q A^+_{\q} \simeq \kappa(\q)$$
\end{proof}
Finally, we get the main result of this section that we have already stated in Proposition \ref{PropPU}.

\begin{rappelprop}[Proposition \ref{PropPU}]
Let $A\inj C\inj B$ be integral extensions of rings. Then the following statements are equivalent :
\begin{enumerate}
    \item[1)] The extension $A \inj C$ is subintegral.
    \item[2)] The image of $C\inj B$ is a subring of $A^+_B$.
\end{enumerate}
\end{rappelprop}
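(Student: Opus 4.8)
The plan is to handle the two implications asymmetrically, since one of them is almost free given the results already in place. For $2)\Rightarrow 1)$ I would argue as follows: assuming the image of $C\inj B$ lies in $A^+_B$, we obtain a tower of integral extensions $A\inj C\inj A^+_B$; Proposition \ref{PropSnSub} tells us $A\inj A^+_B$ is subintegral, and Lemma \ref{LemTransSub} then forces the subextension $A\inj C$ to be subintegral as well. No computation is required here.

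The substantial direction is $1)\Rightarrow 2)$. Assume $A\inj C$ subintegral; I must show that every $c\in C$ lies in $A^+_B$, that is, $c_\p\in A_\p+\Rrm(B_\p)$ for each $\p\in\spec(A)$. Fix $\p$. Subintegrality provides a unique prime $\q\in\spec(C)$ over $\p$ together with an isomorphism on residue fields. First I would record that, by Lemma \ref{LemMaxLoc} and this uniqueness, $C_\p$ is local with maximal ideal $\q C_\p$, so $\Rrm(C_\p)=\q C_\p$; moreover $C_\p/\q C_\p$ is a field, and the localization identifies it with $\kappa(\q)$. Using Lemma \ref{LemIntersecLoc} one gets $\q C_\p\cap A_\p=\p A_\p$, so the natural map $\kappa(\p)=A_\p/\p A_\p\inj C_\p/\q C_\p=\kappa(\q)$ is precisely the residue field extension, hence onto by equiresiduality. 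Consequently there is some $\alpha\in A_\p$ with $c_\p-\alpha\in\q C_\p=\Rrm(C_\p)$.

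The crux — and the step I expect to be the main obstacle — is then to promote membership in $\Rrm(C_\p)$ to membership in $\Rrm(B_\p)$, i.e. to prove the inclusion $\q C_\p\subseteq\Rrm(B_\p)$. Here I would invoke Lemma \ref{LemMaxLoc} once more: every maximal ideal of $B_\p$ is of the form $\q'' B_\p$ with $\q''\in\spec(B)$ and $\q''\cap A=\p$; for each such $\q''$ the contraction $\q''\cap C$ is a prime of $C$ over $\p$, hence equals $\q$ by uniqueness. Thus a typical generator $q/s$ of $\q C_\p$ (with $q\in\q$ and $s\in A\setminus\p$) satisfies $q\in\q=\q''\cap C\subseteq\q''$, so $q/s\in\q'' B_\p$; since $\q''$ was arbitrary, the Remark following Lemma \ref{LemMaxLoc} yields $\q C_\p\subseteq\bigcap_{\q''}\q'' B_\p=\Rrm(B_\p)$. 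Combining with the previous paragraph, $c_\p-\alpha\in\Rrm(B_\p)$ and hence $c_\p=\alpha+(c_\p-\alpha)\in A_\p+\Rrm(B_\p)$, which is exactly what was needed; as $\p$ was arbitrary, $c\in A^+_B$.

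I expect this radical comparison to be where the proof genuinely lives: it is the uniqueness half of subintegrality (pinning down $\q''\cap C=\q$) that drives the inclusion, while the equiresidual half is what manufactures the scalar $\alpha\in A_\p$, and the only delicate point is to keep the two localized pictures $A_\p\inj C_\p\inj B_\p$ coherent so that both halves apply at the same prime.
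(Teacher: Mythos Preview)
Your proof is correct and follows essentially the same route as the paper: the $2)\Rightarrow 1)$ direction is identical, and for $1)\Rightarrow 2)$ both arguments use equiresiduality to produce $\alpha\in A_\p$ with $c_\p-\alpha\in\q C_\p=\Rrm(C_\p)$ and then pass to $\Rrm(B_\p)$. The only cosmetic difference is that the paper handles the radical comparison by citing Lemma~\ref{LemJacobsonIntersec} (giving $\Rrm(C_\p)=\Rrm(B_\p)\cap C_\p\subset\Rrm(B_\p)$ in one stroke), whereas you unfold that inclusion by hand via Lemma~\ref{LemMaxLoc}.
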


\begin{proof}
Let's prove that 1) implies 2). Suppose that $A\inj C$ is subintegral, we want to show :
$$\forall c \in C\text{, }\forall \p \in \spec(A) \text{, } c_{\p}\in A_{\p} + \Rrm(B_{\p})$$
Let $\p\in \spec(A)$. We write $\q\in \spec(C)\text{ such that } \q\cap A = \p$ and so we get the following diagram :
\vspace{-0.1cm}
$$\xymatrix{
   A \ar@{^{(}->}[d] \ar@{^{(}->}[r]& C \ar@{^{(}->}[d] \\
    A_{\p} \ar@{->>}[dd]^{\pi} \ar@{^{(}->}[r]^{i} &C_{\p} \ar@{^{(}->}[d]^{\iota} \ar@/^2pc/@{->}[dd]^{\phi} \\
    &C_{\q} \ar@{->>}[d]^{\pi'} \\
    \kappa(\p) \ar@{->}[r]^{\sim}_{f}&\kappa(\q)
}$$
One can see that the diagram is commutative thanks to the following equality
$$\text{Ker}(\phi)=\{c_{\p} \in C_{\p} \text{ such that }\pi'\circ\iota(c_{\p}) = 0\} = \{ c_{\p}\in C_{\p} \text{ such that } \iota(c_{\p}) \in \q C_{\q}  \} = \q C_{\q}\cap C_{\p} = \q C_{\p}.$$
Thus, by Lemma \ref{LemIntersecLoc}, we have $$\ker(\phi\circ i)=\q C_{\p}\cap A_{\p} = \p A_{\p}$$

We now consider $c \in C$ and we want to show $c \in A^+_B$. We have $\phi(c_{\p}) \in \kappa(\q)$, so one can consider $\frac{a}{s}\in A_{\p}$ such that $f\circ\pi\left(\frac{a}{s}\right) = \phi(c_{\p})$. Thanks to the diagram's commutativity, we have $f\circ \pi = \phi \circ i$, which gives us $$\phi(c_{\p}) = f\circ \pi\left(\frac{a}{s}\right) = \phi\left(i\left(\frac{a}{s}\right)\right)$$
Thus $\phi(c_{\p} - i(\frac{a}{s}))=0$ and so $c_{\p} - i(\frac{a}{s}) \in  \q C_{\p}$. But, since $\q$ is the only ideal above $\p$, we have $\q C_{\p} = \Rrm(C_{\p})$. So, by Lemma \ref{LemJacobsonIntersec}, we get $C_{\p} - i(\frac{a}{s}) \in \Rrm(C_{\p}) = \Rrm(B_{\p})\cap C_{\p} \subset \Rrm(B_{\p})$. And so $c_{\p}\in A_{\p} + \Rrm(B_{\p})$, which shows $C \subseteq A_B^+$.

We now prove that 2) implies 1). Suppose we have $A\inj C \inj A_B^+ \inj B$. Those extensions are integral and, by Proposition \ref{PropSnSub}, the extension $A\inj A_B^+$ is subintegral. Then Lemma \ref{LemTransSub} tells us that $A\inj C$ is subintegral.
\end{proof}
As said previously, Proposition \ref{PropPU} can be put into the form of a universal property. We rewrite it in the following way :

\begin{theorem}[Universal property of seminormalization]
Let $A\inj B$ be an integral extension of rings. For every intermediate extension $C$ of $A\inj B$ such that $A\inj C$ is subintegral, the image of $C$ by the injection $C\inj B$ is contained in $A_B^+$.
$$\xymatrix{
   A \ar@{_{(}->}[rrd]_{subint.} \ar@{^{(}->}[rr]& & A_B^+ \ar@{^{(}->}[rr]^{inclusion} & & B \\
    && C \ar@{^{(}.>}[u] \ar@{_{(}->}[urr] &&
}$$
\end{theorem}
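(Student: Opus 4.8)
The plan is to recognize that this theorem is merely a reformulation of the implication $1)\Rightarrow 2)$ in Proposition~\ref{PropPU}, so the entire task reduces to checking that the hypotheses of that proposition are satisfied and then quoting it. In other words, I would not prove anything new: I would organize the given data into the exact shape required by the earlier result.

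First I would confirm that $A\inj C\inj B$ is a chain of \emph{integral} extensions, which is the standing assumption in Proposition~\ref{PropPU}. The extension $A\inj C$ is integral because, by hypothesis, it is subintegral, and subintegrality includes integrality by definition. For $C\inj B$, note that each $b\in B$ is integral over $A$ since $A\inj B$ is integral; the corresponding monic relation has coefficients in $A\subseteq C$, so $b$ is a fortiori integral over $C$. Hence $C\inj B$ is integral as well, and the chain meets the requirement of the proposition.

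Once this is in place the argument is immediate: the assumption that $A\inj C$ is subintegral is exactly statement $1)$ of Proposition~\ref{PropPU}, so the equivalence yields statement $2)$, namely that the image of $C$ under the injection $C\inj B$ is a subring of $A_B^+$. This is precisely the claimed containment, and it supplies the dotted factorization $C\inj A_B^+\inj B$ displayed in the diagram.

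There is no genuine obstacle here; the only point worth recording is why $A_B^+$ deserves to be the target of a \emph{universal} property, i.e.\ why it is the \emph{largest} subintegral intermediate extension. This is guaranteed by Proposition~\ref{PropSnSub}, which ensures that $A\inj A_B^+$ is itself subintegral. Combined with the containment just established, it shows that $A_B^+$ is a subintegral extension of $A$ that absorbs every other subintegral intermediate extension, which is exactly the content of the universal property.
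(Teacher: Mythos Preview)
Your proposal is correct and matches the paper's treatment exactly: the paper does not give a separate proof of this theorem but simply states that it is a rewriting of Proposition~\ref{PropPU}. Your explicit verification that the chain $A\inj C\inj B$ consists of integral extensions is a harmless extra detail, and your closing remark invoking Proposition~\ref{PropSnSub} anticipates the paper's own subsequent observation about idempotency.
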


\begin{rmq}
Let $A\inj B$ be an integral extension. We have $A \inj A^+_B$ subintegral by Proposition \ref{PropSnSub}. So we can apply the universal property in the following way :
$$\xymatrix{
   A \ar@{_{(}->}[rrd]_{subint.} \ar@{^{(}->}[rr]& & A^+_{A^+_B} \ar@{^{(}->}[rr]^{inclusion} & & A^+_B \\
    && A^+_B \ar@{^{(}.>}[u] \ar@{_{(}->}[urr] &&
}$$
Thus $A^+_B \inj A^+_{A^+_B}$. But, by definition, $A^+_{A^+_B}$ is included in $A^+_B$. We get the following idempotency property $$ A^+_B = A^+_{A^+_B} $$
\end{rmq}

\newpage
\section{Universal property of the seminormalization in the geometric case.}\label{SectionUniversalPropGeo}

Let $k$ be an algebraically closed field of characteristic zero and $X = \spec(A)$ be an affine algebraic variety with $A$ a $k$-algebra of finite type. Let $k[X] := A$ denote the coordinate ring of $X$. We have $k[X]\simeq k[x_1,...,x_n]/I$ for an ideal $I$ of $k[x_1,...,x_n]$ and we will always assume $I$ to be radical. We recall that $X$ is irreducible if and only if $k[X]$ is a domain. A morphism $\pi : Y \to X$ between two varieties induces the morphism $\pi^* : k[X] \to k[Y]$ which is injective if and only if $\pi$ is dominant. We say that $\pi$ is of finite type (resp. is finite) if $\pi^*$ makes $k[Y]$ a $k[X]$-algebra of finite type (resp. a finite $k[X]$-module). 

The space $X$ is equipped with the Zariski topology for which the closed sets are of the form $\Vcal(I) := \{ \p \in \spec(k[X])\mid I\subset \p \}$ where $I$ is an ideal of $k[X]$. We define $X(k) := \{ \m \in \spm(k[X])\mid \kappa(\m) = k \}$. Thus, if we write $k[X] = k[x_1,...,x_n] / I$, the elements of $X(k)$ can be seen as elements of $\spm(k[x_1,...,x_n])$ containing $I$. The Nullstellensatz gives us a Zariski homeomorphism between $X(k)$ and the algebraic set $\Zcal(I) := \{ x\in k^n \mid \forall f \in I\text{ }f(x)=0 \} \subset k^n$. We will call $\pik : Y(k) \to X(k)$ the restriction of $\pi$ to $Y(k)$. We will add the prefix «$Z-$» before a property if it holds for the Zariski topology. If $X$ is irreducible, then we write $\K(X) := \fracloc(k[X])$.

If needed, for $x\in X$ (resp. $X(k)$), we will write $\p_x$ (resp. $\m_x$) its associated ideal in $k[X]$.\\

The goal of this section is to prove the following theorem and to write a universal property of the seminormalization for affine varieties.

\begin{theorem}\label{TheoSubSsiBij}
Let $\pi : Y \to X$ be a finite morphism between affine varieties. Then the following properties are equivalent.
\begin{enumerate}
    \item[1)] The morphism $\pik : Y(k) \to X(k)$ is bijective.
    \item[2)] The extension $\pi^* : k[X] \inj k[Y]$ is subintegral.
    \item[3)] The morphism $\pi : Y \to X$ is a Z-homeomorphism.
    \item[4)] The morphism $\pik : Y(k) \to X(k)$ is a Z-homeomorphism.
\end{enumerate}
\end{theorem}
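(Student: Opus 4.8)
The plan is to prove the cyclic chain of implications $2)\Rightarrow 3)\Rightarrow 4)\Rightarrow 1)\Rightarrow 2)$, writing $A=k[X]$ and $B=k[Y]$, so that $\pi^*:A\inj B$ is the finite (hence integral) extension under consideration. Three of the four implications are essentially formal consequences of the lemmas already available, and the whole weight of the theorem sits in $1)\Rightarrow 2)$. Throughout I would use two standing facts: finite morphisms are closed (which follows from the Going-up property of Lemma \ref{LemGoingUp}), and finite type $k$-algebras are Jacobson, so that every prime of $A$ or $B$ is the intersection of the maximal ideals containing it — equivalently every irreducible closed subset of $\spec$ is the closure of its closed points. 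Since $k$ is algebraically closed, $X(k)=\spm(A)$ and $Y(k)=\spm(B)$, and the closed points of the topological space $\spec(A)$ are exactly its maximal ideals.

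For the easy arc, $2)\Rightarrow 3)$ is immediate: subintegrality gives that $\spec(B)\to\spec(A)$ is a bijection, and a continuous closed bijection is a homeomorphism, so $\pi$ is a $Z$-homeomorphism. For $3)\Rightarrow 4)$ I would restrict the homeomorphism $\pi$ to closed points: a homeomorphism preserves the closed points of the underlying space, these are the maximal ideals, and by the identification above $\spm(B)=Y(k)$ maps homeomorphically onto $\spm(A)=X(k)$. Finally $4)\Rightarrow 1)$ is trivial, a homeomorphism being in particular a bijection.

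The substance is $1)\Rightarrow 2)$, i.e. deducing subintegrality of $A\inj B$ from bijectivity of $\pik$ on closed points alone. I would first establish that $\spec(B)\to\spec(A)$ is bijective. Surjectivity is the Lying-over property (Lemma \ref{LemGoingUp}). For injectivity, take $\q_1,\q_2\in\spec(B)$ with $\q_1\cap A=\q_2\cap A=\p$ and look at the subvarieties $\Vcal(\q_1),\Vcal(\q_2)$. Since $A/\p\inj B/\q_i$ is again a finite injective extension, Lying-over and the maximal-to-maximal correspondence of Lemma \ref{LemGoingUp} show that the closed points of $\Vcal(\q_i)$ surject onto those of $\Vcal(\p)$, while every closed point of $\Vcal(\q_i)$ maps into $\Vcal(\p)$. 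Now the global injectivity of $\pik$ forces, for each $x\in\Vcal(\p)\cap X(k)$, a single $y\in Y(k)$ above it, and that unique $y$ must lie in both $\Vcal(\q_1)$ and $\Vcal(\q_2)$; hence every closed point of $\Vcal(\q_1)$, being the unique preimage of its image in $\Vcal(\p)$, also lies in $\Vcal(\q_2)$. Because $B$ is Jacobson, $\Vcal(\q_1)$ is the closure of its closed points, so $\Vcal(\q_1)\subseteq\Vcal(\q_2)$, i.e. $\q_2\subseteq\q_1$; by symmetry $\q_1=\q_2$.

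It then remains to prove equiresiduality, and this is the step I expect to be the real obstacle, as it is exactly where the characteristic zero hypothesis must enter. Given $\p$ with its unique $\q$ above it, the inclusion $A/\p\inj B/\q$ is a finite dominant extension of domains inducing a finite field extension $\kappa(\p)\inj\kappa(\q)$; let $d$ be its degree, and I must show $d=1$. In characteristic zero this extension is separable, so $\Vcal(\q)\to\Vcal(\p)$ is generically étale: there is a dense Zariski-open $U\subseteq\Vcal(\p)$ over which every fiber of $\pik$ consists of exactly $d$ closed points. Picking any $x\in U\cap X(k)$, which is nonempty since $k$ is infinite and $U$ is a dense open of an irreducible variety, injectivity of $\pik$ forces the fiber to be a single point, whence $d=1$ and $\kappa(\p)\xrightarrow{\sim}\kappa(\q)$. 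This yields condition $2)$. I would stress that the separability used here is precisely what can fail in positive characteristic, where a purely inseparable extension gives one-point fibers of degree $p$, which is why the hypothesis on the characteristic is indispensable; the algebraic closedness of $k$ is likewise used, through $X(k)=\spm(A)$ and the non-emptiness of $U\cap X(k)$, to pass freely between closed points and maximal ideals.
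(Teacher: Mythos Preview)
Your proof is correct and follows essentially the same route as the paper: the paper packages $1)\Leftrightarrow 2)$ into Proposition~\ref{PropSubEqBij} and the closedness argument for $2)\Rightarrow 3)$ into Lemma~\ref{LemFiniBijDoncHomeoSurSpec}, but the underlying arguments (Jacobson/Nullstellensatz for injectivity on $\spec$, and the generic fiber count of Lemma~\ref{LemRamifié} for equiresiduality) are exactly yours. Your additional remarks on where characteristic zero and algebraic closedness enter are accurate and match the paper's use of separability in Lemma~\ref{LemRamifié}.
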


The following classical result will be useful in the sequel.

\begin{lemma}\label{LemRamifié}
Let $\pi : Y \to X$ be a finite morphism between irreducible varieties. Then there exists a non empty Z-open subset $U$ of $X$ such that $$\forall u\in U(k)\text{, }\#\pik^{-1}(u) = [\K(Y):\K(X)]$$
\end{lemma}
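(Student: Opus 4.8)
The plan is to reduce the count of the generic fibre to counting the roots of a single polynomial, via the primitive element theorem together with a localization argument. Throughout I assume $\pi$ is dominant, so that $\pi^* : k[X] \inj k[Y]$ is injective and induces an inclusion of function fields $\K(X) \subseteq \K(Y)$; this is implicit in the very notation $[\K(Y):\K(X)]$, and a dominant finite morphism is automatically surjective. Since $\pi$ is finite, $k[Y]$ is a finite $k[X]$-module, so $\K(Y)$ is a finite extension of $\K(X)$, and it is separable because $\operatorname{char} k = 0$.

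First I would produce a convenient primitive element. Choosing $k[X]$-module generators $y_1,\dots,y_m$ of $k[Y]$, these also span $\K(Y)$ over $\K(X)$, so $\K(Y) = \K(X)(y_1,\dots,y_m)$. As $k$ is infinite and the extension is separable, a generic $k$-linear combination $\theta = \sum_i c_i y_i$ is a primitive element of $\K(Y)/\K(X)$ by the primitive element theorem, and it lies in $k[Y]$ since the $c_i \in k$ and the $y_i \in k[Y]$. Writing $d = [\K(Y):\K(X)]$, let $P(T) = T^d + a_{d-1}T^{d-1} + \dots + a_0 \in \K(X)[T]$ be the minimal polynomial of $\theta$; being the minimal polynomial of a separable element, $P$ is separable, so its discriminant $\disc(P) \in \K(X)$ is nonzero.

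Next comes the localization step, which I expect to be the technical heart. The coefficients $a_i$ and a numerator $\delta \in k[X]$ of $\disc(P)$ can all be controlled by inverting a single nonzero $h_0 \in k[X]$ clearing the relevant denominators; over the principal open $X \setminus \Vcal(h_0)$ we then have $P \in k[X]_{h_0}[T]$ and an inclusion of $k[X]_{h_0}$-algebras $k[X]_{h_0}[\theta] \cong k[X]_{h_0}[T]/(P) \hookrightarrow k[Y]_{h_0}$. Both rings have fraction field $\K(Y)$ and become equal after inverting every nonzero element of $k[X]$, so the cokernel of this inclusion is a finitely generated $k[X]_{h_0}$-module that vanishes after tensoring with $\K(X)$; hence it is torsion and is annihilated by some nonzero $h_1 \in k[X]$. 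Setting $h = h_0 h_1 \delta$, I obtain an isomorphism of $k[X]_h$-algebras
$$ k[Y]_h \;\cong\; k[X]_h[T]/(P), $$
in which, moreover, $\disc(P)$ is a unit.

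Finally I would count the fibre. Let $U := X \setminus \Vcal(h)$, a nonempty Z-open subset of $X$ since $h \neq 0$ and $k[X]$ is a domain, and let $u \in U(k)$ with maximal ideal $\m_u \subset k[X]_h$. Reducing the displayed isomorphism modulo $\m_u$ gives $k[Y]_h / \m_u\,k[Y]_h \cong k[T]/\bigl(P_u(T)\bigr)$, where $P_u$ is the image of $P$, a monic degree-$d$ polynomial over $k$; the points of $\pik^{-1}(u)$ correspond to the maximal ideals of this quotient, that is, to the distinct roots of $P_u$ in $k$ (here $k$ algebraically closed guarantees that all residue fields are $k$). Since the discriminant is a polynomial in the coefficients, $\disc(P_u) = \disc(P)(u)$, which is nonzero by construction; hence $P_u$ is separable with exactly $d$ distinct roots, giving $\#\pik^{-1}(u) = d = [\K(Y):\K(X)]$ for every $u \in U(k)$. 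The main obstacle is the torsion argument producing the single displayed isomorphism; once it is in place, the discriminant computation is routine bookkeeping.
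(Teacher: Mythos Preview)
Your argument is correct and self-contained. The paper, by contrast, gives essentially a one-line proof: it passes to a nonempty normal open subset $U$ of $X$, notes that in characteristic zero the function-field extension is separable, and then cites Shafarevich (Theorem~7, p.~117) for the restricted morphism $\pi^{-1}(U)\to U$.

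So you take a genuinely different route. You unpack what the cited theorem actually says, via the primitive element theorem, a generic-freeness style localization (your torsion-cokernel step producing $k[Y]_h \cong k[X]_h[T]/(P)$), and the discriminant. This buys you a proof that does not depend on normality of the base or on any black-box reference, and it makes the open set $U$ completely explicit as a principal open $\Dcal(h)$. The paper's version is shorter and leans on a standard source; yours is longer but transparent, and the ``technical heart'' you flagged (the torsion argument) is indeed the only nonformal step, and it is carried out correctly.
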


\begin{proof}
Let $U$ be a non empty Z-open set of $X$ such that $U$ is normal. Since the characteristic of $k$ is zero, the extension $\K(X) \inj \K(U)$ is separable. So we can apply Theorem 7 p.117 from \cite{Shafa} on the morphism $\pi$ restricted to $\pi^{-1}(U)$.
\end{proof}

The following proposition gives the main part of Theorem \ref{TheoSubSsiBij}. The Nullstellensatz is the key to prove it, which is not surprising for a statement of this kind.
\newpage
\begin{prop}\label{PropSubEqBij}
Let $\pi : Y \to X$ be a finite morphism between affine varieties. The following properties are equivalent
\begin{enumerate}
    \item[1)] The extension $\pi^* : k[X] \inj k[Y]$ is subintegral.
    \item[2)] The morphism $\pik : Y(k) \to X(k)$ is bijective.
\end{enumerate}
\end{prop}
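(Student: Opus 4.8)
The plan is to treat the two implications separately, with $2)\Rightarrow 1)$ being the substantial half; throughout, the two ambient inputs are the Nullstellensatz (which, since $k$ is algebraically closed, identifies $X(k)=\spm(k[X])$ and $Y(k)=\spm(k[Y])$ because every maximal ideal of a finite-type $k$-algebra has residue field $k$) and the fact that the coordinate rings are reduced, so that a reduced closed subvariety is determined by its $k$-points. Note also that, should one wish, the bijectivity of $\pik$ in $2)$ already forces $\pi$ to be dominant, hence $\pi^*$ injective, so that speaking of subintegrality makes sense.

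For $1)\Rightarrow 2)$, which is short: if $\pi^*$ is subintegral then $\spec(k[Y])\to\spec(k[X])$ is a bijection, and by Lemma \ref{LemGoingUp}.2 this bijection carries maximal ideals to maximal ideals in both directions, hence restricts to a bijection $\spm(k[Y])\to\spm(k[X])$. Via the Nullstellensatz identifications above this is exactly $\pik$, so $\pik$ is bijective. (Equiresiduality is not even needed here.)

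For $2)\Rightarrow 1)$ I must verify the two conditions in the definition of subintegrality. Surjectivity of $\spec(k[Y])\to\spec(k[X])$ is Lying-Over (Lemma \ref{LemGoingUp}.1). For injectivity, take $\q_1,\q_2\in\spec(k[Y])$ over $\p\in\spec(k[X])$ and set $Z_i=\Vcal(\q_i)\subseteq Y$ and $W=\Vcal(\p)\subseteq X$; the inclusions $k[X]/\p\inj k[Y]/\q_i$ make $\pi$ restrict to finite dominant morphisms $Z_i\to W$, which are surjective on $k$-points by Lying-Over together with Lemma \ref{LemGoingUp}.2. Since $\pik$ is injective, each $z\in Z_i(k)$ is the unique $\pik$-preimage of $\pi(z)\in W(k)$; combined with surjectivity of $Z_i(k)\to W(k)$, this shows $Z_i(k)$ is precisely the set of these unique preimages over $W(k)$, a description independent of $i$. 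Hence $Z_1(k)=Z_2(k)$, so $Z_1=Z_2$ by the Nullstellensatz, i.e. $\q_1=\q_2$. For equiresiduality, fix $\p$ with its now-unique $\q$ above it: the restriction $Z=\Vcal(\q)\to W=\Vcal(\p)$ is a finite morphism between irreducible varieties that is bijective on $k$-points, so Lemma \ref{LemRamifié} produces a dense open $U\subseteq W$ over which every fiber has exactly $[\K(Z):\K(W)]$ points; bijectivity forces this degree to equal $1$, and therefore the field extension $\kappa(\p)=\K(W)\inj\K(Z)=\kappa(\q)$ is an isomorphism.

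The \textbf{main obstacle} is exactly the passage from the hypothesis, which only concerns closed points, to the two genuinely scheme-theoretic assertions about arbitrary primes and their residue fields. This is where the Jacobson property of finite-type $k$-algebras (so that reduced subvarieties are recovered from their $k$-points) and the characteristic-zero hypothesis built into Lemma \ref{LemRamifié} (separability, ensuring the generic fiber cardinality equals the field-degree $[\K(Z):\K(W)]$) are indispensable; without the latter, bijectivity of $\pik$ would not preclude a purely inseparable, degree-$p$ residue extension.
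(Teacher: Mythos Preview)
Your proof is correct and follows essentially the same route as the paper's. The only difference is cosmetic: for injectivity on $\spec$, the paper writes $\p_{y_i}=\bigcap_{\p_{y_i}\subseteq\m_y}\m_y$ and argues directly with maximal ideals, whereas you phrase the same argument via the closed subvarieties $Z_i=\Vcal(\q_i)$ and the equality $Z_1(k)=Z_2(k)=\pik^{-1}(W(k))$; the equiresiduality step via Lemma~\ref{LemRamifié} is identical.
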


\begin{proof}
Suppose that $k[X] \inj k[Y]$ is subintegral, then $Y \to X$ is a bijection and as the extension is integral, the inverse image of $X(k)$ is $Y(k)$ (cf. lemma \ref{LemGoingUp}). Therefore 
$$\pik : Y(k) \to X(k)\text{ is bijective}$$

Conversely, suppose that $Y(k) \to X(k)$ is bijective. We start by checking that $\pi$ is bijective. Let $y_1, y_2 \in Y$ such that $\pi(y_1) = \pi(y_2)$ and write $\p_{y_1}$, $\p_{y_2}$ for the associated prime ideals of $k[Y]$. We want to prove that $y_1 = y_2$. By the Nullstellensatz, it is equivalent to show 
$$\p_{y_1} \text{\hspace{0.3cm} = } \displaystyle \bigcap_{\p_{y_1} \subseteq \m_y\in Y(k)} \m_y \text{\hspace{0.3cm} = } \displaystyle \bigcap_{\p_{y_2} \subseteq \m_y\in Y(k)} \m_y \text{\hspace{0.3cm} = \hspace{0.3cm} } \p_{y_2}$$
Let $y\in Y(k)$ such that $\p_{y_1} \subset \m_y$. We then have $\p_{\pi(y_1)} \subset \m_{\pi(y)}$, so $\p_{\pi(y_2)} \subset \m_{\pi(y)}$. Thus, by the going-up property, we can consider $y'\in Y(k)$ such that $\p_{y_2} \subseteq \m_{y'}$ and $\pi(y) = \pi(y')$. As we have supposed that $\pi$ is injective on $Y(k)$, we get $y = y'$.\\
Finally $$\forall y \in Y(k) \text{ \hspace{0.8cm}} \p_{y_1}\subseteq \m_y \implies \p_{y_2} \subseteq \m_y$$
The role of $y_1$ and $y_2$ can be inverted in the proof, so we get
$$\p_{y_1} = \displaystyle \bigcap_{y\in Y(k), \p_{y_1} \subseteq \m_y} \m_y \text{  \hspace{0.3cm}= } \displaystyle \bigcap_{y\in Y(k), \p_{y_2} \subseteq \m_y} \m_y = \p_{y_2} \text{  \hspace{0.4cm}i.e. \hspace{0.2cm} } y_1 = y_2$$

It remains to check the equiresiduality. Let $x\in X$ and $y\in Y$ such that $\pi(y) = x$. We want to prove $\kappa(\p_x) \simeq \kappa(\p_y)$. If we write $V = \spec(k[X]/\p_x)$ and $W = \spec(k[Y]/\p_y)$, we get the following commutative diagram :

$$\xymatrix{
    k[X] \ar@{->>}[d]^{\pi_X} \ar@{^{(}->}[r]^{\pi^*}& k[Y] \ar@{->>}[d]^{\pi_Y} \\
    k[V] \ar@{^{(}->}[d] \ar@{^{(}->}[r]^{(\pi_{|_W})^*} &k[W] \ar@{^{(}->}[d] \\
     \K(V) \ar@{^{(}->}[r] & \K(W)
}$$

As $k[Y]$ is a finite $k[X]$-module, we have that $k[W]$ is a finite $k[V]$-module. Thus $\pi_{|W}$ is a finite morphism between two irreducible varieties. Therefore we can apply Lemma \ref{LemRamifié} to consider a non empty Z-open set $U$ of $W$ such that
$$\forall u\in U(k) \text{\hspace{0.5cm}} \#(\pi_{|W(k)})^{-1}(u)=[\K(W):\K(V)]$$
But we have previously shown that $\pi$ is bijective so $[\K(W):\K(V)] = 1$ and so we get $\kappa(\p_x) \simeq \kappa(\p_y)$.
\end{proof}

\newpage
The following lemma completes the results needed to get Theorem \ref{TheoSubSsiBij}.
\begin{lemma}\label{LemFiniBijDoncHomeoSurSpec}
Let $A \inj B$ be an integral extension of rings and $\pi : \spec(B) \to \spec(A)$ be the induced map. If the morphism $\pi$ is bijective, then $\pi$ is a Z-homeomorphism.
\end{lemma}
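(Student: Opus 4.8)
The plan is to show that $\pi$ is a continuous closed bijection, from which it follows that $\pi^{-1}$ is continuous and hence that $\pi$ is a homeomorphism. Continuity of $\pi$ is automatic, since the map on spectra induced by a ring homomorphism is always continuous for the Zariski topology, and bijectivity holds by hypothesis. The entire content of the lemma therefore lies in proving that $\pi$ is a \emph{closed} map.

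To establish closedness I would take an arbitrary closed set $\Vcal(\mathfrak{b}) \subset \spec(B)$, where $\mathfrak{b}$ is an ideal of $B$, and show that its image is exactly $\pi(\Vcal(\mathfrak{b})) = \Vcal(\mathfrak{b}\cap A)$, which is closed in $\spec(A)$. The inclusion $\subseteq$ is immediate: if $\q\supseteq\mathfrak{b}$ then $\q\cap A \supseteq \mathfrak{b}\cap A$, so $\pi(\q)\in\Vcal(\mathfrak{b}\cap A)$. For the reverse inclusion I would pass to the quotient and observe that $A/(\mathfrak{b}\cap A) \inj B/\mathfrak{b}$ is again an integral extension: the injection comes from the kernel of $A\to B/\mathfrak{b}$ being exactly $\mathfrak{b}\cap A$, and integrality descends by reducing a monic relation modulo $\mathfrak{b}\cap A$.

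Given a prime $\p \supseteq \mathfrak{b}\cap A$, the prime $\p/(\mathfrak{b}\cap A)$ of $A/(\mathfrak{b}\cap A)$ lifts, by the Lying-Over property applied to this integral extension (Lemma \ref{LemGoingUp}, 1), to a prime of $B/\mathfrak{b}$, that is, to a prime $\q\supseteq\mathfrak{b}$ of $B$ with $\q\cap A = \p$. This gives $\p = \pi(\q) \in \pi(\Vcal(\mathfrak{b}))$, establishing the equality and hence the closedness of $\pi$. A continuous closed bijection has continuous inverse, so $\pi$ is a Z-homeomorphism.

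I do not anticipate a genuine obstacle: the only step requiring care is the reduction to the quotient, where one must verify both that integrality is preserved and that Lying-Over is being invoked for the correct extension. It is worth noting that bijectivity is used \emph{only} at the very end, to upgrade the closed continuous surjection into a homeomorphism; the closedness of $\pi$ itself holds for every integral extension, independently of the hypothesis.
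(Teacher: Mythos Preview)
Your proof is correct and follows essentially the same approach as the paper: both establish that $\pi$ is closed by showing $\pi(\Vcal(\text{ideal})) = \Vcal(\text{ideal}\cap A)$, then conclude via continuity and bijectivity. The only cosmetic difference is that you pass to the quotient $A/(\mathfrak{b}\cap A)\inj B/\mathfrak{b}$ and invoke Lying-Over for an arbitrary ideal $\mathfrak{b}$, whereas the paper takes $\mathfrak{b}=\q$ prime and invokes Going-up directly; your version has the small advantage of handling non-prime ideals without appealing to an irreducible decomposition.
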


\begin{proof}
Since $\pi$ is Z-continuous, even when the extension is not integral, we just have to show that $\pi$ is Z-closed. Let $\p\in \spec(A)$ and $\q\in\spec(B)$ such that $\q\cap A = \p$. We have
$$\pi(\mathcal{V}(\q)) = \{ \q'\cap A\mid \q\subset \q' \in \spec(B) \}$$
so $\pi(\mathcal{V}(\q)) \subset \mathcal{V}(\q\cap A) = \mathcal{V}(\p)$. If $\p'\in \mathcal{V}(\p)$, then the going-up property says that there exists $\q'\in \spec(B)$ such that $\q' \cap A = \p'$ and $\q\subset \q'$. Therefore $\mathcal{V}(\p) \subset \pi(\mathcal{V}(\q))$ and so $\pi(\mathcal{V}(\q)) = \mathcal{V}(\p)$ which is Z-closed.
\end{proof}

We can finally prove the main theorem of this section.
\begin{rappel}[Theorem \ref{TheoSubSsiBij}]
Let $\pi : Y \to X$ be a finite morphism between two affine varieties. Then the following properties are equivalent.
\begin{enumerate}
    \item[1)] The morphism $\pik : Y(k) \to X(k)$ is bijective.
    \item[2)] The extension $\pi^* : k[X] \inj k[Y]$ is subintegral.
    \item[3)] The morphism $\pi : Y \to X$ is a Z-homeomorphism.
    \item[4)] The morphism $\pik : Y(k) \to X(k)$ is a Z-homeomorphism.
\end{enumerate}
\end{rappel}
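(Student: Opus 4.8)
The plan is to close a single cycle of implications
$2) \Rightarrow 3) \Rightarrow 4) \Rightarrow 1) \Rightarrow 2)$, using the already–established equivalence of 1) and 2) (Proposition \ref{PropSubEqBij}) to supply the last link and Lemma \ref{LemFiniBijDoncHomeoSurSpec} to upgrade an algebraic bijection of spectra to a topological homeomorphism. The standing hypothesis that $\pi$ is finite is what makes everything run: it guarantees that $\pi^*$ realizes $k[Y]$ as an integral $k[X]$-algebra, so that the going-up machinery of Lemma \ref{LemGoingUp} and the homeomorphism criterion of Lemma \ref{LemFiniBijDoncHomeoSurSpec} apply throughout.

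First I would treat $2) \Rightarrow 3)$. If $\pi^* : k[X] \inj k[Y]$ is subintegral, then by definition the induced map $\spec(k[Y]) \to \spec(k[X])$ is bijective; since $\pi$ is finite this extension is integral, and Lemma \ref{LemFiniBijDoncHomeoSurSpec} applies verbatim to conclude that $\pi : Y \to X$ is a Z-homeomorphism.

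Next comes $3) \Rightarrow 4)$, whose content is essentially topological. The point is that a homeomorphism carries the subspace of closed points of its source bijectively onto the subspace of closed points of its target and restricts to a homeomorphism between these subspaces for the induced topologies. In $\spec(k[Y])$ and $\spec(k[X])$ the closed points are exactly the maximal ideals (consistently with Lemma \ref{LemGoingUp}, 2)); moreover, because $k$ is algebraically closed, the Nullstellensatz forces $\kappa(\m) = k$ for every $\m \in \spm(k[Y])$ and every $\m \in \spm(k[X])$, so $\spm$ coincides with $Y(k)$, respectively $X(k)$. Finally the Zariski topology on $X(k)$ is precisely the topology induced from $\spec(k[X])$ on this set of closed points. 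Hence the restriction of the Z-homeomorphism $\pi$ to closed points is exactly $\pik : Y(k) \to X(k)$, which is therefore a Z-homeomorphism.

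The remaining implication $4) \Rightarrow 1)$ is immediate, since a homeomorphism is in particular a bijection, and $1) \Rightarrow 2)$ is Proposition \ref{PropSubEqBij}; this completes the cycle and establishes the equivalence of all four statements. I expect the only delicate point to be the bookkeeping in $3) \Rightarrow 4)$: one must check that ``closed point'' is an intrinsic topological notion preserved by homeomorphisms, that these closed points are exactly the $k$-points under the hypothesis $k = \bar{k}$, and that the subspace topology agrees with the Zariski topology on $X(k)$. All the genuinely arithmetic input—the Nullstellensatz together with the generic fibre count of Lemma \ref{LemRamifié}—has already been absorbed into Proposition \ref{PropSubEqBij}, so no further hard estimate is required here.
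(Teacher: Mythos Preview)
Your proof is correct and follows essentially the same route as the paper: the cycle $2) \Rightarrow 3) \Rightarrow 4) \Rightarrow 1)$ using Lemma \ref{LemFiniBijDoncHomeoSurSpec} for the first step, together with Proposition \ref{PropSubEqBij} for $1) \Leftrightarrow 2)$. You supply a little more detail than the paper for $3) \Rightarrow 4)$ (identifying closed points with $k$-points via the Nullstellensatz and checking the subspace topology), which the paper passes over in a single clause.
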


\begin{proof}
First of all, Proposition \ref{PropSubEqBij} gives us the first equivalence. Now suppose that $k[X] \inj k[Y]$ is subintegral. Then $\pi$ is bijective and since $k[X] \inj k[Y]$ is integral, we can apply Lemma \ref{LemFiniBijDoncHomeoSurSpec} and get that $\pi$ is a $Z$-homeomorphism. Finally, if $\pi$ is a Z-homeomorphism then $\pik$ is a Z-homeomorphism and thus $\pik$ is bijective.
\end{proof}

Before going further, we must define the seminormalization of an affine variety. If $\pi : Y \to X$ is a finite morphism between two affine varieties, then $k[X]^+_{k[Y]}$ is a finite $k[X]$-module because it is a submodule of $k[Y]$. Thus $k[X]^+_{k[Y]}$ is a $k$-algebra of finite type because so is $k[X]$. 
\begin{definition}\label{DefSeminormVariete}
Let $\pi : Y \to X$ be a finite morphism between two affine varieties. The affine variety defined by
$$X^+_Y = \spec(k[X]^+_{k[Y]})$$
is called the seminormalization of $X$ in $Y$.
\end{definition}
As a consequence of the previous theorem, we are going to see that, in order to get the seminormalization of a variety in an other one, it is sufficient to glue together the closed points in the fibers of $\pik$.
\begin{definition}
Let $A \inj B$ be an integral extension of rings.\\
We define
$$A^{+_{max}}_B = \{b \in B\mid\forall \m \in \spm(A) \text{, } b_{\m} \in A_{\m}+\Rrm(B_{\m})\}$$
\end{definition}

\begin{cor}\label{CorSemiMaxEstSemi}
Let $\pi : Y \to X$ be a finite morphism between two affine varieties.\\ Then
$$ k[X]^{^{+_{max}}}_{k[Y]} = k[X]^{^+}_{k[Y]} $$
\end{cor}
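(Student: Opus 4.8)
The plan is to prove the two inclusions separately. The inclusion $k[X]^{+}_{k[Y]} \subseteq k[X]^{+_{max}}_{k[Y]}$ is immediate, since the defining condition of the seminormalization quantifies over all of $\spec(k[X])$, hence in particular over $\spm(k[X])$. The whole content is therefore the reverse inclusion, and I would obtain it by showing that $A \inj C$ is subintegral, where I write $A = k[X]$, $B = k[Y]$ and $C = k[X]^{+_{max}}_{k[Y]}$. Once this is known, the implication 1)$\,\Rightarrow\,$2) of Proposition \ref{PropPU} forces the image of $C$ to sit inside $A^+_B$, which is exactly the missing inclusion. First I would note that $C$ is indeed a subring of $B$ (by the same verification as for $A^+_B$, each $A_{\m} + \Rrm(B_{\m})$ being a subring of $B_{\m}$), that it is reduced as a subring of $k[Y]$, and that it is a finite $A$-module; hence $C$ is the coordinate ring of an affine variety $Z := \spec(C)$ carrying a finite morphism $Z \to X$.

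The key step is a geometric description of $C$: an element $b \in B$ lies in $C$ if and only if $b$ is constant on every fiber of $\pik : Y(k) \to X(k)$ over a closed point. To establish this I would fix $x \in X(k)$ with associated maximal ideal $\m$, write the fiber as $\pik^{-1}(x) = \{y_1, \dots, y_r\}$ (finite, and consisting of closed points of $Y$ by Lemma \ref{LemGoingUp}), and use Lemma \ref{LemMaxLoc} to identify $\spm(B_{\m})$ with $\{\m_{y_1}B_{\m}, \dots, \m_{y_r}B_{\m}\}$. The Chinese Remainder Theorem then yields $B_{\m}/\Rrm(B_{\m}) \cong \prod_i \kappa(y_i) = k^r$, under which $b$ maps to the tuple of its values $(b(y_1), \dots, b(y_r))$. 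Since every $a \in \m$ pulls back to a function vanishing at each $y_i$, the image of $A_{\m}$ in $k^r$ is exactly the diagonal; hence $b_{\m} \in A_{\m} + \Rrm(B_{\m})$ if and only if $b(y_1) = \cdots = b(y_r)$. Ranging over all $x$ gives the description (and re-proves that $C$ is a ring, fiber-constant functions being closed under sums and products).

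With this in hand I would check that the induced map $Z(k) \to X(k)$ is bijective. Surjectivity is the Lying-Over property together with Lemma \ref{LemGoingUp}. For injectivity, suppose $z_1 \neq z_2 \in Z(k)$ lie over the same $x \in X(k)$, and lift them through the integral (hence surjective) map $Y(k) \to Z(k)$ to points $y_1, y_2 \in Y(k)$, which both lie in $\pik^{-1}(x)$. Choosing $c \in C$ with $c(z_1) \neq c(z_2)$ (possible by the Nullstellensatz since $z_1 \neq z_2$), and noting that $c(z_i) = c(y_i)$ because all residue fields equal $k$ and $\m_{z_i} = \m_{y_i} \cap C$, the fiber-constancy of $c$ forces $c(y_1) = c(y_2)$, a contradiction. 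Thus $Z(k) \to X(k)$ is bijective, so Proposition \ref{PropSubEqBij} gives that $A \inj C$ is subintegral, which closes the argument as explained above.

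The main obstacle is the local computation of the second paragraph: one must correctly identify the maximal ideals of $B_{\m}$, the semisimple quotient $B_{\m}/\Rrm(B_{\m})$, and the diagonal image of $A_{\m}$. This is where the standing hypotheses ($k$ algebraically closed, so that every residue field at a closed point is $k$) are genuinely used; everything else is a formal consequence of Lemma \ref{LemGoingUp}, Lemma \ref{LemMaxLoc}, Proposition \ref{PropSubEqBij} and Proposition \ref{PropPU}.
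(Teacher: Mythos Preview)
Your proposal is correct, and its overall architecture matches the paper's: the inclusion $A^+_B \subseteq A^{+_{max}}_B$ is trivial, and for the reverse you show that $\spm(C) \to \spm(A)$ is bijective (with $C = A^{+_{max}}_B$), then invoke Proposition~\ref{PropSubEqBij} and the universal property (Proposition~\ref{PropPU}).

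The genuine difference is in how injectivity of $\spm(C) \to \spm(A)$ is proved. The paper argues by direct element-chasing in the localizations, essentially replaying the proof of Proposition~\ref{PropSnSub} restricted to maximal ideals: given $b \in \m_1 \setminus \m_2$ one decomposes $b_\m = \alpha + \beta$ with $\alpha \in A_\m$, $\beta \in \Rrm(B_\m)$, and pushes $\alpha$ and $\beta$ through Lemmas~\ref{LemIntersecLoc} and~\ref{LemJacobsonIntersec} to a contradiction. Your route instead establishes first a concrete description of $C$ as the set of $b \in B$ that are constant on every closed fiber of $\pi_k$, obtained via the Chinese Remainder Theorem and the identification of each residue field with $k$; injectivity is then a short geometric argument lifting through $Y(k) \to Z(k)$. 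Your approach leans more heavily on $k$ being algebraically closed (through $\kappa(\m) = k$ and the diagonal description of the image of $A_\m$), whereas the paper's local-algebra argument for this step does not. On the other hand, your method yields as a byproduct the fiber-constancy characterization of $k[X]^{+_{max}}_{k[Y]}$, which the paper only establishes later, inside the proof of Theorem~\ref{TheoRatioContEgalPolySN}.
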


\begin{proof}
We write $A := k[X]$ and $B := k[Y]$. So we have to show $A_B^{+_{max}} = A^+_B$. First of all, note that the inclusion $A^+_B \subseteq A_B^{+_{max}}$ is obvious.

In order to prove the other inclusion, we have to show that $\spm(A_B^{+_{max}}) \to \spm(A)$ is bijective. If we succeed, we will be able to deduce, thanks to Proposition \ref{PropSubEqBij}, that $A\inj A_B^{+_{max}}$ is subintegral and the universal property of the seminormalization will give us the result.

Let $\m \in \spm(A)$ and $\m_1,\m_2 \in \spm(A_B^{+_{max}})$ such that $\m_1 \cap A = \m_2 \cap A = \m$. Suppose that there exists $b\in \m_1\cap \m_2^c$ and write $b_\m = \alpha + \beta \in A_\m + \Rrm(B_\m)$. Then $\beta = b_{\m} - \alpha \in A^{+_{max}}_{B,\m}$, so $\beta \in \Rrm(B_{\m})\cap A^{+_{max}}_{B,\m}$. By Lemma \ref{LemJacobsonIntersec}, we get $\beta \in \Rrm(A^{+_{max}}_{B,\m}) \subset \m_1A^{+_{max}}_{B,\m}\cap \m_2A^{+_{max}}_{B,\m}$. In particular, $\beta \in \m_1A^{+_{max}}_{B,\m}$ and by hypothesis on $b$, we also have $b_{\m} \in \m_1A^{+_{max}}_{B,\m}$. 
We then have $\alpha = b_\m - \beta \in \m_1A^{+_{max}}_{B,\m} \cap A_\m$ and $\m_1 A^{+_{max}}_{B,\m}\cap A_{\m} = \m A_\m = \m_2A^{+_{max}}_{B,\m}\cap A_{\m}$, by Lemma \ref{LemIntersecLoc}. Thus $\alpha \in \m_2A^{+_{max}}_{B,\m}$ and since $\beta \in \m_2A^{+_{max}}_{B,\m}$, we get $b_\m \in \m_2A^{+_{max}}_{B,\m}$ which is not possible by assumption on $b$. Considering the fact that the role of $\m_1$ and $\m_2$ can be inverted in the proof, we get $\m_1 = \m_2$.
\end{proof}

We are now able to write a geometric version of the universal property of seminormalization. By just adapting the one we have obtained at the end of the first section with coordinate rings, we get :

$$\xymatrix{
    k[X] \ar@{_{(}->}[rrd]_{subint.} \ar@{^{(}->}[rr]& & k[X]_{k[Y]}^+\simeq k[X^+_Y] \ar@{^{(}->}[rr] & & k[Y] \\
    && k[Z] \ar@{.>}[u] \ar@{_{(}->}[urr] &&
}$$

This gives us the following statement for varieties :
\begin{theorem}[Universal property of the seminormalization]\label{propPUgéo}
Let $Y \to Z \to X$ be finite morphisms of affine varieties. Then $\pik : Z(k) \to X(k)$ is bijective if and only if there exists a morphism $\pi_Z^+ : X_Y^+ \to Z$ such that $\pi \circ \pi^+_Z = \pi^+$. Moreover $\pi_Z^+$ is unique and $\pi_{Z(k)}^+ : X^+_Y(k) \to Z(k)$ is bijective.
$$\xymatrix{
    Y \ar[rrd] \ar[rr]&& X^+_Y \ar@{.>}[d]^{\pi^+_Z} \ar[rr]^{\pi^+} && X \\
    && Z \ar[urr]_{\pi bij} &&
}$$
\end{theorem}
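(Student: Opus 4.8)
The plan is to reduce the statement to the purely algebraic universal property already established. Put $A=k[X]$, $C=k[Z]$ and $B=k[Y]$, so that the finite morphisms $Y\to Z\to X$ correspond to integral extensions $A\inj C\inj B$, while $X^+_Y=\spec(A^+_B)$ and $\pi^+$ is induced by the inclusion $A\inj A^+_B$. Under this dictionary a morphism $\pi_Z^+:X^+_Y\to Z$ sitting in the diagram, i.e.\ compatible with the structural maps $Y\to X^+_Y$ and $Y\to Z$, is exactly the datum of an inclusion $C\subseteq A^+_B$ of subrings of $B$, and the relation $\pi\circ\pi_Z^+=\pi^+$ records that the composite $A\inj C\inj A^+_B$ equals the inclusion $A\inj A^+_B$. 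Hence the equivalence will follow by combining Proposition \ref{PropSubEqBij} (which says $\pik$ is bijective if and only if $A\inj C$ is subintegral) with the universal property of Proposition \ref{PropPU} (which says $A\inj C$ is subintegral if and only if $C\subseteq A^+_B$).

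For the forward implication, assume $\pik$ is bijective. Proposition \ref{PropSubEqBij} makes $A\inj C$ subintegral, and then Proposition \ref{PropPU} gives $C\subseteq A^+_B$ inside $B$. Taking $\pi_Z^+$ to be the morphism induced by this inclusion, the chain of inclusions $A\inj C\inj A^+_B$ coincides with $A\inj A^+_B$, which is precisely $\pi\circ\pi_Z^+=\pi^+$.

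For the converse, a morphism $\pi_Z^+$ as in the diagram furnishes the inclusion $C\subseteq A^+_B$; Proposition \ref{PropPU} then shows $A\inj C$ is subintegral and Proposition \ref{PropSubEqBij} yields the bijectivity of $\pik$. For the final assertions, note first that $A\inj A^+_B$ is subintegral by Proposition \ref{PropSnSub}, so $(\pi^+)_k$ is bijective by Proposition \ref{PropSubEqBij}. Together with the bijectivity of $\pik$ and the relation $\pik\circ\pi_{Z(k)}^+=(\pi^+)_k$, this gives that $\pi_{Z(k)}^+$ is bijective. Uniqueness then follows: since $\pik$ is injective, the same relation determines $\pi_{Z(k)}^+$, and a morphism of reduced affine varieties over an algebraically closed field is determined by its action on closed points.

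The delicate point is the first step of this dictionary: one must check that the factorizing morphism is genuinely the one induced by the inclusion $C\subseteq A^+_B$, that is, that it is compatible with the maps coming from $Y$. This compatibility is indispensable, for the bare relation $\pi\circ\pi_Z^+=\pi^+$ is too weak on its own: with $A=k[t]$, $C=k[t]\times k[t]$ (diagonal), $B=C$, one has $A^+_B=A$, and the projection $\A^1\to\A^1\sqcup\A^1$ onto one factor satisfies $\pi\circ\pi_Z^+=\pi^+$ although $\pik$ is two-to-one. Once the factorization is pinned down as the inclusion, the statement becomes a direct consequence of Propositions \ref{PropPU} and \ref{PropSubEqBij}, with no further computation.
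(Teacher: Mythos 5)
Your proof is correct and follows essentially the same route as the paper: both reduce the statement to the combination of Proposition \ref{PropSubEqBij} (bijectivity of $\pik$ versus subintegrality of $k[X]\inj k[Z]$) and Proposition \ref{PropPU} ($k[Z]\subseteq k[X]^+_{k[Y]}$), with the last assertions handled by minor bookkeeping (you compose two bijections where the paper invokes Lemma \ref{LemTransSub} and Theorem \ref{TheoSubSsiBij}; the effect is the same).

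One remark is worth keeping. Your observation that the bare relation $\pi\circ\pi_Z^+=\pi^+$ is too weak — with the example $A=k[t]\inj C=B=k[t]\times k[t]$, where $A^+_B=A$ and the inclusion of $\A^1$ as one component of $\A^1\sqcup\A^1$ satisfies the relation while $\pik$ is two-to-one — is a genuine point that the theorem's statement glosses over. The paper's own proof quietly repairs this by writing ``$\exists\,\pi_Z^+:X^+_Y\to Z$ \emph{dominant} such that $\pi\circ\pi_Z^+=\pi^+$'' in the chain of equivalences: dominance makes $(\pi_Z^+)^*:k[Z]\to k[X^+_Y]$ injective, identifies $k[Z]$ with an $A$-subalgebra of $A^+_B$, and excludes your example (whose factorizing morphism is not dominant). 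Your alternative fix, requiring compatibility with the structural maps from $Y$ as the diagram suggests, works equally well. Either hypothesis should really appear in the statement; with it in place, your argument is complete.
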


\begin{proof}
We have the following equivalences :
$$\begin{array}{rcl}
    \piC : Z(k) \to X(k) \text{ bijective} & \iff & k[X] \inj k[Z] \text{ subintegral} \text{ ( by Theorem \ref{TheoSubSsiBij} )}\\
     & \iff & k[Z] \subset k[X]_{k[Y]}^+ \simeq k[X_{k[Y]}^+] \text{ ( by Proposition \ref{PropPU} )}\\
     & \iff & \exists \pi_Z^+ : X_Y^+ \to Z \text{ dominant, such that } \pi \circ \pi^+_Z = \pi^+
\end{array}$$
We get the uniqueness of $\pi_Z^+$ by injectivity of $\pi$. Since $k[X] \inj k[X^+_Y]$ is subintegral then Lemma \ref{LemTransSub} says that $k[Z] \inj k[X^+_Y]$ is also subintegral. So Theorem \ref{TheoSubSsiBij} tells us that $\pi_{Z(k)}^+ : X^+_Y(k) \to Z(k)$ is bijective.
\end{proof}

\section{Continuous rational functions on complex affine varieties.}\label{SectionContinuousRational}

This section is dedicated to the introduction and study of continuous rational functions on affine varieties and to show that they are linked with the concept of seminormalization. Since we use the Euclidean topology, we restrict ourself to complex affine varieties. A generalization on any algebraic variety over algebraic closed field of characteristic zero is given in the forthcoming paper \cite{BFMQ}.
The first subsection is dedicated to the study of the ring of continuous rational functions. Concretely, we show that this ring corresponds to the coordinate ring of the seminormalization of the variety.
In the second subsection, we show that continuous rational functions are always regulous in the case of complex affine varieties. The theory of regulous functions comes from real algebraic geometry and was introduced in \cite{FHMM} and \cite{Kollar}. In this theory, continuous rational functions and regulous functions are not the same on real singular algebraic sets.
In the third subsection we look at continuous functions which are a root of a polynomial in $\C[X][t]$. This leads to identify continuous rational functions with "c-holomorphic" functions with algebraic graphs on algebraic varieties. This kind of functions is studied in \cite{BDTW} and \cite{BDT} in the point of view of complex analytic geometry. We show, with algebraic arguments, that they coincide on
algebraic varieties. Finally, the fourth subsection presents examples of continuous rational functions. 

We begin by recalling classical results on the normalization of an affine variety.

\begin{definition}\label{DefAnneauTotalDesFrac}
Let $A$ be a commutative ring. We note $\K$ the localization $S^{-1}A$ where $S$ is the set of non-zero-divisors $A$. The ring $\K$ is called the \textit{total ring of fractions} of $A$.\\

If $A$ is reduced with a finite number of minimal prime ideals $\p_1,...,\p_n$, then $$\p_1 \cap ... \cap \p_n = \bigcap_{p\in \spec(A)} p = (0)$$
We get the following injections :
$$ A\inj A / \p_1 \times ... \times A / \p_n \inj K_1 \times ... \times K_n \text{ where }K_i := \fracloc(A/\p_i) $$
Then the total ring of fractions of $A$ corresponds to the product of the fields $K_i$.
\end{definition}
We define $A'$ to be the integral closure of $A$ in $\K$ and we simply call it \textit{the integral closure of $A$}. In the same spirit, the seminormalization of $A$ in $\K$ is denoted by $A^+$ and is simply called \textit{the seminormalization of $A$}. Finally, we say that $A$ is \textit{seminormal} if $A^+=A$.

For $X$ an affine variety, the total ring of fractions of $\C[X]$ is denoted by $\K(X)$. The ring $\K(X)$ (which is a field when $X$ is irreducible) is also the ring of classes of rational fractions on $X$ and is called the ring of rational functions on $X$. It means that it represents the set of classes of regular functions $f$ on a Z-dense Z-open set $U$ of $X(\C)$ with the equivalence relation $(f_1, U_1) \sim (f_2, U_2)$ iff $f_1 = f_2$ on $U_1 \cap U_2$.

We say that a morphism $\varphi : Y \to X$ is birational if the associated morphism $\K(X) \inj \K(Y)$ is an isomorphism.

The integral closure of $\C[X]$ being a finite $\C[X]$-module ( see \cite{Eisen} Thm 4.14 ), it is also a $\C$-algebra of finite type. Thus we can define the \textit{normalization} $X'$ of $X$ such that $X' = \spec(\C[X]')$. We get a finite and birational morphism $\pi' : X' \to X$. The normalization of $X$ is the biggest affine variety finitely birational to $X$. It means that for every finite, birational morphism $\varphi : Y\to X$, there exists $\psi : X' \to Y$ such that $\pi' = \varphi \circ \psi$.

The seminormalization $X^+$ of $X$ is the variety $X^+_{X'}$ defined in Definition \ref{DefSeminormVariete}. The seminormalization $X^+$ comes with a finite, birational and bijective morphism ${\pi^+ : X^+ \to X}$ whose universal property is given by Proposition \ref{propPUgéo}.

Finally for every affine variety $X$, we have the following extensions of rings :
$$ \C[X] \inj \C[X^+] \inj \C[X'] \inj \K(X). $$ 


\begin{definition}
Let $X$ be an affine variety. We write $\KO(X(\C))$ the set of continuous functions $f : X(\C) \to \C$ for the Euclidean topology which are rational on $X(\C)$.
\end{definition}

\begin{ex}
The most classical example of such a function is the following :\\
Let $X = \spec(\C[x,y]/<y^2-x^3>)$, consider the function $f$ defined on $X(\C) = \Zcal(y^2-x^3)$ by 
$$f = \left\{ 
\begin{array}{l}
    \frac{y}{x} \text{ if }x\neq 0 \\
    0 \text{ else} \end{array} 
    \right.$$
\end{ex}

First of all, we show that we will always be able to assume $X$ to be irreducible thanks to the following lemmas.

\begin{lemma}\label{LemUnionDenseEstDense}
Let $E$ be a topological space and $\{E_i\}_{i\in I}$ be a covering of $E$. Let $A$ be a subspace of $E$ such that $A\cap E_i$ is dense in $E_i$ for all $i\in I$. Then $A$ is dense in $E$.
\end{lemma}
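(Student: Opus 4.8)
The plan is to prove density through the standard characterization: a subspace $A$ is dense in $E$ if and only if every nonempty open subset of $E$ meets $A$. So I would fix an arbitrary nonempty open set $U\subseteq E$ and aim to show $U\cap A\neq\varnothing$; once this is established for every such $U$, density of $A$ follows immediately.

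First I would use the covering hypothesis to localize the problem to a single piece of the cover. Choosing a point $x\in U$, the fact that $\{E_i\}_{i\in I}$ covers $E$ gives an index $i$ with $x\in E_i$. Then $U\cap E_i$ is open in the subspace topology of $E_i$, and it is nonempty since it contains $x$. This reduces the question to finding a point of $A$ inside $U\cap E_i$.

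Next I would invoke the density hypothesis inside $E_i$. Saying that $A\cap E_i$ is dense in $E_i$ means precisely that every nonempty open subset of $E_i$ meets $A\cap E_i$. Applying this to the nonempty open set $U\cap E_i$ yields a point $a\in (U\cap E_i)\cap(A\cap E_i)$. In particular $a\in U\cap A$, so $U\cap A\neq\varnothing$, and since $U$ was an arbitrary nonempty open set of $E$, the subspace $A$ is dense in $E$.

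The argument is entirely elementary and I do not expect a genuine obstacle; the only point requiring care is bookkeeping with the subspace topology on $E_i$. Concretely, one must remember that open sets of $E_i$ are exactly those of the form $V\cap E_i$ with $V$ open in $E$, which is what guarantees that $U\cap E_i$ is itself open in $E_i$ and hence that the density of $A\cap E_i$ in $E_i$ may legitimately be applied to it.
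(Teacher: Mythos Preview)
Your proof is correct and follows essentially the same argument as the paper: take a nonempty open set $U$, use the covering to find an index $i$ with $U\cap E_i\neq\varnothing$, and then apply the density of $A\cap E_i$ in $E_i$ to conclude $A\cap U\neq\varnothing$. Your version is slightly more explicit about the subspace topology, but the approach is identical.
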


\begin{proof}
Let $U$ be a non-empty open set of $E$. Then there exists $i\in I$ such that $U\cap E_i \neq \varnothing$. Since $A\cap E_i$ is dense in $E_i$ and $U\cap E_i$ is a non-empty open set of $E_i$, we get $A\cap U \cap E_i \neq \varnothing$. Then $A\cap U \neq \varnothing$. So $A$ is dense in $E$.
\end{proof}

\begin{lemma}\label{Lemirreducible}
Let $X$ be an affine variety and $f:X(\C) \to \C$. We write $X = \displaystyle\bigcup_{i=1}^n X_i$ where the $X_i$ are the irreducible components of $X$. The following properties are equivalent
\begin{enumerate}
    \item[1)] $f\in \KO(X(\C))$
    \item[2)] $\forall i\in \llbracket 1;n \rrbracket \hspace{0.3cm} f_{|X_i(\C)} \in \KO(X_i(\C))$
\end{enumerate}

\end{lemma}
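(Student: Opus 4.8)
The plan is to treat the two implications separately, exploiting that the finitely many irreducible components $X_i(\C)$ are simultaneously Zariski-closed and Euclidean-closed in $X(\C)$, and that on the irreducible $X_i$ every nonempty Z-open set is Z-dense.

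For $1)\Rightarrow 2)$, continuity of $f_{|X_i(\C)}$ is automatic, being the restriction of a continuous map to a subspace. For rationality, I would pick a Z-dense Z-open $U\subseteq X(\C)$ together with a regular $g$ such that $f=g$ on $U$. As $U$ is Z-dense, its complement contains no component, so $U\cap X_i(\C)$ is a nonempty, hence Z-dense, Z-open subset of $X_i(\C)$; the restriction $g_{|X_i}$ is regular there and coincides with $f_{|X_i}$, so $f_{|X_i(\C)}\in\KO(X_i(\C))$.

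The converse $2)\Rightarrow 1)$ splits into continuity and rationality. Continuity follows from the pasting lemma for a finite closed cover: the preimage under $f$ of a Euclidean-closed set meets each $X_i(\C)$ in a set closed in $X_i(\C)$, hence closed in $X(\C)$, and a finite union of these is closed. For rationality, I would choose for each $i$ a regular $g_i$ on a Z-dense Z-open $U_i\subseteq X_i(\C)$ with $f_{|X_i}=g_i$ on $U_i$, and then introduce the pure locus $X_i^{\circ}:=X(\C)\setminus\bigcup_{j\neq i}X_j(\C)$, which is Z-open in $X(\C)$, contained in $X_i(\C)$, and Z-dense in $X_i(\C)$. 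Writing $U_i=W_i\cap X_i(\C)$ for a Z-open $W_i$ of $X(\C)$, one has $U_i\cap X_i^{\circ}=W_i\cap X_i^{\circ}$, which is Z-open in $X(\C)$; hence $U:=\bigcup_i\bigl(U_i\cap X_i^{\circ}\bigr)$ is Z-open in $X(\C)$, its pieces are pairwise disjoint and therefore clopen in $U$, and since $U\cap X_i(\C)\supseteq U_i\cap X_i^{\circ}$ is Z-dense in $X_i(\C)$, Lemma \ref{LemUnionDenseEstDense} shows $U$ is Z-dense in $X(\C)$. Defining $g$ on $U$ to be $g_i$ on the $i$-th clopen piece produces a regular function on $U$ equal to $f$, whence $f$ is rational and $f\in\KO(X(\C))$.

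The main obstacle is precisely the rationality half of $2)\Rightarrow 1)$: the component-wise dense opens $U_i$ need not be Z-open in $X(\C)$, and the components can overlap, so one cannot naively glue the $g_i$ along the $X_i(\C)$. Restricting attention to the pure loci $X_i^{\circ}$ removes the overlaps and turns the pieces into genuine Z-open (indeed clopen-in-$U$) subsets of $X(\C)$, which is exactly what is needed both to assemble a single regular representative and to apply Lemma \ref{LemUnionDenseEstDense}.
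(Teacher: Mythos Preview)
Your proof is correct and follows essentially the same approach as the paper: both directions hinge on the same ideas (restriction preserves regularity on a dense open for $1)\Rightarrow 2)$, and for $2)\Rightarrow 1)$ one assembles a global dense open from the componentwise data via Lemma~\ref{LemUnionDenseEstDense}). Your presentation is in fact a bit more careful than the paper's---you make explicit the passage to the pure loci $X_i^{\circ}$ needed to get a genuine Z-open of $X(\C)$ on which the glued function is regular, and you invoke the pasting lemma where the paper writes out an $\epsilon$-argument---but these are cosmetic differences, not a different route.
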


\begin{proof}
Let $f \in \KO(X(\C))$, we call $U$ the Z-dense Z-open set on which $f$ is regular. For $j\in \llbracket1;n\rrbracket$, the set $ X\backslash \bigcup_{i\neq j} X_i $ is a Z-open set contained in $X_j$, which is not empty because $X_j \nsubseteq \bigcup_{i\neq j} X_i$. Thus, since $U$ is Z-dense, we have $X_j(\C)\cap U \neq \varnothing$. So $f$ is regular on $X_j(\C) \cap U$ which is Z-dense because $X_j$ is irreducible. Then, $f_{|X_j(\C)}$ being clearly continuous, we have $f_{|X_j(\C)}\in \KO(X_j(\C))$.\\

Let $f : X(\C) \to \C$ such that $f_{|X_i(\C)} \in \KO(X_i(\C))$ for all $i\in \llbracket 1;n \rrbracket$. Then $f$ is regular on a Z-dense Z-open set of every component and since, by Lemma \ref{LemUnionDenseEstDense}, a union of dense open sets of each irreducible component of $X$ is a dense open set of $X$, we get that $f$ is regular on a Z-dense Z-open set of $X(\C)$. Let's show that $f$ is continuous on $X(\C)$. First of all, $f$ is clearly continuous on every point of the set $$X(\C)\backslash \left( \bigcup_{i\neq j} X_i(\C) \cap X_j(\C) \right)$$ which are the points that do not belong to any intersection of components. Now let's take a look at the continuity near the other points. Let $x \in \bigcap_{j\in J} X_j(\C)$ where $J$ is a subset of $\llbracket1;n\rrbracket$. Let $\epsilon > 0$ and $j\in J$. Since $f$ is continuous on $X_j(\C)$, we can consider a Euclidean open set $U_j$ containing $x$ such that $\forall y\in X_j(\C)\cap U_j$ we have $|f(x)-f(y)|<\epsilon$. By doing the same for all $j\in J$, we obtain 
$$ \forall y\in X(\C)\cap \left( \bigcap_{j\in J} U_j \right) \hspace{1cm} |f(x)-f(y)|<\epsilon $$
\end{proof}

\begin{rmq}
Let $X$ be an affine variety and $X = \displaystyle\bigcup_{i=1}^n X_i$ its decomposition into irreducible components. Then
$$\KO(X(\C)) \simeq \{(f_1,...,f_n)\in \KO(X_1(\C))\times ... \times \KO(X_n(\C)) \mid f_{i_{\mid X_i(\C)\cap X_j(\C)}} = f_{j_{\mid X_i(\C)\cap X_j(\C)}} \}$$
\end{rmq}

The next proposition shows that the continuity allows us to be more precise concerning the Z-open set where an element of $\KO(X(\C))$ is regular. We write $X_{reg}$ (resp. $X_{sing}$) the set of regular (resp. singular) points of $X$ and $X_{reg}(\C)$ (resp. $X_{sing}(\C)$) those of $X(\C)$.

\begin{prop}\label{PropRatEstRegSurRegX}
A function belongs to $\KO(X(\C))$ if and only if it is continuous for the Euclidean topology and it is regular on $X_{reg}(\C)$.
\end{prop}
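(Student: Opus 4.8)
The plan is to prove the two implications separately; the only nontrivial content is that, for $f\in\KO(X(\C))$, continuity forces regularity at every smooth point, which is exactly the claim announced in the introduction.

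For the easy implication, suppose $f$ is continuous and regular on $X_{reg}(\C)$. First I would check that $X_{reg}$ is a Z-open set which is also Z-dense. On each irreducible component $X_i$, the set obtained by removing $\sing(X_i)$ and the other components is a dense Z-open subset of $X_i$ (in characteristic zero each variety is generically smooth, and discarding the other components only removes a closed set missing the generic point of $X_i$), and every such point is a smooth point of $X$ since locally $X$ coincides with $X_i$ there. Hence $X_{reg}\cap X_i$ is dense in $X_i$, and Lemma \ref{LemUnionDenseEstDense} gives that $X_{reg}(\C)$ is dense in $X(\C)$. Thus $f$ is regular on a Z-dense Z-open set, so it is rational, and being continuous it lies in $\KO(X(\C))$.

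For the converse, take $f\in\KO(X(\C))$ and fix $x\in X_{reg}(\C)$; since regularity is a local property I only need a Zariski neighborhood of $x$ on which $f$ is regular. Because $x$ is a smooth point it lies on a single component, so it admits a Z-open neighborhood $V$ which is smooth and irreducible, and on $V$ the function $f$ is continuous and regular on the dense open set $U\cap V$, whence $f\in\K(V)$. I would then use that a smooth variety is normal, so that the regular local ring $\Ocal_{V,x}$, being a normal domain, equals the intersection of its localizations at the height-one primes, each of which is a discrete valuation ring. Consequently $f$ is regular at $x$ if and only if it has nonnegative valuation along every prime divisor passing through $x$.

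Assume for contradiction that $f$ is not regular at $x$. Then there is a prime divisor $D$ through $x$ along which $f$ has a pole. I would choose a point $d$ of $D$ that is smooth in both $V$ and $D$ and at which the numerator of $f$ does not vanish, take local analytic coordinates with $D=\{z_1=0\}$, and write $f=z_1^{-m}g$ with $m\geqslant 1$ and $g(d)\neq 0$; letting $z_1\to 0$ with the remaining coordinates fixed at $d$ forces $|f|\to\infty$, contradicting the finiteness and continuity of $f$ at $d$. Hence $f$ is regular at $x$, and $x$ being arbitrary, $f$ is regular on all of $X_{reg}(\C)$. The main obstacle is precisely this last step: converting a pole along a codimension-one subvariety into a genuine blow-up of the continuous function, which requires selecting a sufficiently generic smooth point of the polar divisor so that the local normal form $f=z_1^{-m}g$ with $g$ non-vanishing is available.
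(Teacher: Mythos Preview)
Your argument is correct, and the reduction to a smooth irreducible neighborhood $V$ of $x$ is clean. The paper, however, takes a different route for the nontrivial implication. Rather than invoking the valuative criterion and producing an analytic blow-up at a generic point of a polar divisor, it argues purely algebraically: having extended the relation $pf=q$ from the domain of regularity to all of $X(\C)$ by Euclidean density and continuity, it uses that $\Ocal_{X,x}$ is a UFD (Auslander--Buchsbaum) to factor $p=\prod p_i^{s_i}$ into primes near $x$; since $f$ is everywhere finite, $q$ must vanish on each $\Zcal(p_i)$, so the Nullstellensatz gives $p_i\mid q$, and iterating strips every prime factor of $p$ from both sides until $f$ itself is exhibited as a regular function at $x$. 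Your approach is more geometric and makes the role of Euclidean continuity vivid (a pole forces $|f|\to\infty$), while the paper's approach is constructive and never leaves the Zariski setting once the identity $pf=q$ is secured globally. One small point worth making explicit in your contradiction step: the point $d\in D$ should also be chosen off every \emph{other} polar component of $f$, so that $g=z_1^{m}f$ is genuinely regular in a full neighborhood of $d$ and not merely at the generic point of $D$; this is again a generic condition on $D$, so it costs nothing, and your closing sentence already gestures at it.
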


\begin{rmq}
Let $x\in X$, we write $\Ocal_{X,x} := \C[X]_{\p_x}$ the ring of functions which are regular at $x$. If $X$ is irreducible and $W$ is a subvariety of $X$, we write $\Ocal_X(W) := \cap_{x\in W} \Ocal_{X,x}$.
\end{rmq}

\begin{proof} We assume $X$ irreducible thanks to Lemma \ref{Lemirreducible}. Let $f : X(\C) \to \C$ be regular on the Z-dense Z-open set $U(\C)$ and continuous on $X(\C)$. Then there exists $q,p \in \C[X]$ such that $pf = q$ on $U(\C)$. As a Z-dense Z-open set is dense for the Euclidean topology and $pf - q$ is continuous, we get $pf - q = 0$ on $X(\C)$.\\
Let $x\in X_{reg}(\C)$, we have to show $f \in \Ocal_{X,x}$. If $p$ is a unit in $\Ocal_{X,x}$, then $f = q.p^{-1} \in \Ocal_{X,x}$.
Else, since $x\in X_{reg}(\C)$, the Auslander-Buchsbaum theorem tells us that $\Ocal_{X,x}$ is a UFD. So, even if it means multiplying $q$ by some unit elements of $\Ocal_{X,x}$, we can write $$p = \prod_{i=1}^np_i^{s_i}$$ with $p_i$ some prime elements of $\Ocal_{X,x}$.
We now consider a $Z$-open neighbourhood $W_1(\C)$ of $x$ such that $pf=q$ on $W_1(\C)$ and $p_1$ is a prime element of $\Ocal_X(W_1(\C))$. Thus $q$ vanish on $\Zcal(p_1)$ and since the Nullstellensatz tells us that $\Jcal(\Zcal(p_1)) = p_1\Ocal_X(W_1(\C))$, we have $q\in p_1\Ocal_X(W_1(\C))$. So there exists $q_1 \in \Ocal_X(W_1(\C))$ such that $q = p_1q_1$. Then we obtain 
$$\prod_{i=2}^n p_i^{s_i}p_1^{s_1-1}f_{|_{W_1(\C)}} = q_{1|_{W_1(\C)}} $$
In the case where $s_1>1$ we'll have $\Zcal(p_1) \subset \Zcal(q_1)$ so we can iterate the process and get $q_{s_1} \in \Ocal_X(W_1(\C))$ such that 
$$\prod_{i=2}^n p_i^{s_i}f_{|_{W_1(\C)}} = q_{s_1|_{W_1(\C)}}$$
If we take $W_2(\C)$ a $Z$-open neighbourhood of $x$ on which $p_2$ is prime, we can the repeat the previous argument on $W_1(\C)\cap W_2(\C)$. Thus, by doing this $n$ times, we can consider a $Z$-open neighbourhood $W_n(\C)$ of $x$ in $X(\C)$ and $q_{\Sigma s_1} \in \Ocal_X(W_n(\C))$ such that $f_{|_{W_n(\C)}} = q_{\Sigma s_i|_{W_n(\C)}}$. Then we finally conclude that $f \in \Ocal_{X,x}$.
\end{proof}

\begin{rmq}
Note that the Euclidean continuity is essential in the proof to apply the argument of density at the end of the first paragraph. In particular, the Zariski continuity used in \cite{LV} wouldn't be enough because it doesn't allow us in general to extend an equality which is true on a Z-dense set. As an example we can consider the function $f$ defined on $\A^1(\C)$ by
$$f = \left\{ 
\begin{array}{l}
    \frac{1}{z} \text{ if }z\neq 0 \\
    0 \text{ else} \end{array} 
    \right.$$
This function is Z-continuous because it is a bijection and the Z-open sets of $\A^1(\C)$ are of the form $\A^1(\C)\backslash\{\text{finite nb of points}\}$. However, even if $zf(z) = 1$ on the Z-dense Z-open set $\A^1(\C)\backslash\{0\}$, this equality does not extend on the whole space $\A^1(\C)$.\\
\end{rmq}

\subsection{Connection between continuous rational functions and seminormalization.}\label{ConnectionBetweenRat}

The goal of this subsection is to study the ring $\KO(X(\C))$ for $X$ an affine variety. The main result being Proposition \ref{TheoRatioContEgalPolySN} which tells us that this ring is in fact the coordinate ring of the seminormalization of $X$, in other words $\KO(X(\C)) = \C[X^+]$. To do this, we must look at how the continuous rational functions behave when they are composed with finite morphisms of affine varieties. 

As we have seen previously, the functions in $\KO(X(\C))$ are regular on the regular points of $X(\C)$. Thus, if $X$ is normal, the singular locus is too thin for a continuous rational function not to be polynomial. This allows us to identify the ring $\KO(X(\C))$ when $X$ is normal. 
\begin{prop}\label{PropRegSurNormalSontPoly}
Let $X$ be an affine normal variety. Then $\KO(X(\C)) = \Ocal_X(X(\C)) = \C[X]$.
\end{prop}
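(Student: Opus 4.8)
The plan is to establish the chain of inclusions $\C[X] \subseteq \Ocal_X(X(\C)) \subseteq \KO(X(\C)) \subseteq \C[X]$, so that the three rings coincide. The two leftmost inclusions are immediate: a polynomial is regular at every closed point, so it lies in $\Ocal_X(X(\C)) = \bigcap_{x\in X(\C)}\Ocal_{X,x}$, and a function regular at every closed point is in particular continuous and rational, hence belongs to $\KO(X(\C))$. Thus the whole content lies in the inclusion $\KO(X(\C)) \subseteq \C[X]$. Since a normal variety is a disjoint union of irreducible normal varieties, Lemma \ref{Lemirreducible} lets me assume $X$ irreducible, so that $\C[X]$ is an integrally closed domain with fraction field $\K(X)$.

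First I would record the two consequences of normality that drive the argument. On one hand, $\C[X]$ being integrally closed makes it a Krull domain, so that inside $\K(X)$ one has $\C[X] = \bigcap_{\mathrm{ht}(\p)=1}\C[X]_{\p}$, the intersection ranging over the height-one primes. On the other hand, Serre's criterion $R_1$ gives that $X$ is regular in codimension one; equivalently $\codim_X X_{sing} \geq 2$, and in particular the generic point of every codimension-one subvariety is a regular point.

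Now I would take $f \in \KO(X(\C))$. By Proposition \ref{PropRatEstRegSurRegX}, $f$ is rational, i.e. $f \in \K(X)$, and it is regular at every point of $X_{reg}(\C)$. Fix a height-one prime $\p$ and let $V = \Vcal(\p)$ be the corresponding codimension-one subvariety. Because $X_{sing}$ has codimension at least two, $V$ is not contained in $X_{sing}$, so $V(\C)$ contains a point $x \in X_{reg}(\C)$. Since $\p \subseteq \m_x$, localization gives $\Ocal_{X,x} = \C[X]_{\m_x} \subseteq \C[X]_{\p}$, and as $f \in \Ocal_{X,x}$ we obtain $f \in \C[X]_{\p}$. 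As $\p$ was an arbitrary height-one prime, $f \in \bigcap_{\mathrm{ht}(\p)=1}\C[X]_{\p} = \C[X]$, which closes the chain and proves the proposition.

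The main obstacle is precisely this passage from regularity at the regular closed points to membership in $\C[X]$: it is an algebraic Hartogs phenomenon, and it is here — and only here — that normality is indispensable. Concretely, one must (i) know that $\C[X]$ is recovered as the intersection of its localizations at height-one primes, which fails without integral closedness, and (ii) ensure that each such localization is "seen" by a regular closed point, which uses that the singular locus has codimension at least two. Without normality a continuous rational function may genuinely fail to be polynomial, as the cuspidal example preceding the proposition illustrates.
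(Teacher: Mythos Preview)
Your proof is correct and rests on the same two pillars as the paper's: Proposition~\ref{PropRatEstRegSurRegX} to get regularity of $f$ on $X_{reg}(\C)$, and the codimension-two bound on the singular locus of a normal variety. The difference lies in how the extension across $X_{sing}$ is carried out. The paper invokes a ready-made Hartogs-type extension theorem (\cite{I}, p.~124) to produce a global regular function $\tilde f$ agreeing with $f$ on $X_{reg}(\C)$, and then uses Euclidean density to conclude $f=\tilde f$. You instead unpack this extension step algebraically: using that an integrally closed noetherian domain is Krull, you write $\C[X]=\bigcap_{\mathrm{ht}\,\p=1}\C[X]_\p$, and for each height-one $\p$ you pick a regular closed point $x$ on $\Vcal(\p)$ (possible precisely because $\codim X_{sing}\geq 2$) to witness $f\in\Ocal_{X,x}\subset\C[X]_\p$. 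This makes the argument more self-contained and pinpoints exactly where each half of normality ($R_1$ for the choice of $x$, $S_2$/Krull for the intersection formula) is used, at the cost of a little more bookkeeping. One small point worth making explicit: once you have shown that the rational-function class of $f$ lies in $\C[X]$, the identification of the continuous map $f$ with this polynomial on all of $X(\C)$ still uses density of $X_{reg}(\C)$ and Euclidean continuity, exactly as in the paper's last line.
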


\begin{proof}
First $\C[X] \subset \KO(X(\C))$ is obvious. Conversely, let $f\in \KO(X(\C))$. By the previous proposition we get $f\in \Ocal_X(X_{reg}(\C))$. But since $X$ is normal we have codim($X_{sing}(\C)$)$\geqslant 2$. Thus, by \cite{I} p.124, there exists a function $\tilde{f} \in \Ocal_X(X(\C))$ which coincides with $f$ on $X_{reg}(\C)$. As the function $f-\tilde{f}$ is continuous for the Euclidean topology and vanish on $X_{reg}(\C)$ which is a dense open set of $X(\C)$, we get $f = \tilde{f} \in \Ocal_X(X(\C)) = \C[X]$.
\end{proof}

In the following proposition, we improve the result of Proposition \ref{PropRatEstRegSurRegX}. We write $\Norm(X) := \{ x\in X\text{ | }\Ocal_{X,x}\text{ is integrally closed} \}$ and $\Norm(X(\C)) = \Norm(X)\cap X(\C)$.

\begin{prop}\label{PropRegSurPtsNormaux}
Let $X$ be an affine variety and $f \in \KO(X(\C))$. Then $$\forall x\in \Norm(X(\C)) \hspace{0.2cm}\text{  } f\in \Ocal_{X,x}$$
\end{prop}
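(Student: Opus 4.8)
The plan is to reduce to the irreducible case and then exploit the description of a Noetherian normal local ring as the intersection of the discrete valuation rings attached to its height-one primes.

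First I would reduce to $X$ irreducible, as in the previous propositions. Since $\Ocal_{X,x}$ is integrally closed it is in particular a domain, so $x$ cannot lie on two distinct irreducible components; writing $X_{i_0}$ for the unique component through $x$, the set $X\setminus\bigcup_{i\neq i_0}X_i$ is a Zariski neighbourhood of $x$ contained in $X_{i_0}$, whence $\Ocal_{X,x}=\Ocal_{X_{i_0},x}$ and $x\in\Norm(X_{i_0}(\C))$. By Lemma \ref{Lemirreducible} we have $f_{|X_{i_0}(\C)}\in\KO(X_{i_0}(\C))$, and membership of $f$ in $\Ocal_{X,x}$ is a local condition that only involves $X_{i_0}$ near $x$. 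So I may assume $X$ irreducible, so that $\Ocal_{X,x}$ is a Noetherian integrally closed domain with fraction field $\K(X)$.

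The key structural input is the classical fact that a Noetherian integrally closed domain $A$ is the intersection $\bigcap_{\p}A_{\p}$ taken over its height-one primes $\p$, each $A_{\p}$ being a discrete valuation ring. Applied to $A=\Ocal_{X,x}$, this reduces the claim $f\in\Ocal_{X,x}$ to showing $f\in(\Ocal_{X,x})_{\p}$ for every height-one prime $\p$ of $\Ocal_{X,x}$. Now such a $\p$ is of the form $\p_Z\Ocal_{X,x}$ for a unique codimension-one irreducible subvariety $Z$ of $X$ through $x$, and the localization $(\Ocal_{X,x})_{\p}$ is exactly $\Ocal_{X,\eta_Z}$, the local ring of $X$ at the generic point $\eta_Z$ of $Z$. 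Thus it suffices to prove $f\in\Ocal_{X,\eta_Z}$ for every such $Z$.

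Finally I would feed in regularity on the smooth locus. Since $(\Ocal_{X,x})_{\p}=\Ocal_{X,\eta_Z}$ is a discrete valuation ring, it is regular, so $\eta_Z\in X_{reg}$; hence $Z\cap X_{reg}$ is a non-empty Zariski-open subset of $Z$ and, $\C$ being algebraically closed, it contains a closed point $z\in(Z\cap X_{reg})(\C)$. By Proposition \ref{PropRatEstRegSurRegX} the function $f$ is regular at the smooth point $z$, that is $f\in\Ocal_{X,z}$; and since $z\in Z$ we have $\p_{\eta_Z}\subseteq\m_z$, so $\Ocal_{X,z}\subseteq\Ocal_{X,\eta_Z}$ and therefore $f\in\Ocal_{X,\eta_Z}$. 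Intersecting over all height-one primes of $\Ocal_{X,x}$ gives $f\in\Ocal_{X,x}$, as wanted. I expect the only genuine subtlety to be the bookkeeping that identifies the height-one localizations of $\Ocal_{X,x}$ with the local rings $\Ocal_{X,\eta_Z}$ at generic points of codimension-one subvarieties, together with the observation that each such generic point is smooth; the Euclidean continuity of $f$ enters only indirectly, through the regularity on $X_{reg}(\C)$ already established in Proposition \ref{PropRatEstRegSurRegX}.
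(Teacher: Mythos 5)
Your proof is correct, but it takes a genuinely different route from the paper's. The paper argues globally through the normalization: it invokes Proposition \ref{PropRegSurNormalSontPoly} to get $f\circ\piC'\in\C[X']$ (which rests on the extension of regular functions across the codimension-$\geqslant 2$ singular locus of the normal variety $X'$), and then descends to $x$ using that $\Ocal_{X,x}\simeq\Ocal_{X',x'}$ at a normal point because normalization commutes with localization. You instead stay entirely local at $x$ and replace the geometric Hartogs-type extension by its algebraic avatar, namely $A=\bigcap_{\mathrm{ht}\,\p=1}A_{\p}$ for a Noetherian integrally closed domain $A=\Ocal_{X,x}$; this reduces the claim to $f\in\Ocal_{X,\eta_Z}$ for each divisor $Z$ through $x$, which you obtain from Proposition \ref{PropRatEstRegSurRegX} at a smooth closed point of $Z$ (such a point exists because $\eta_Z$ is a regular point, the DVR $(\Ocal_{X,x})_{\p}$ being regular, and the regular locus of $X$ is Zariski-open). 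The paper's version is shorter given that Proposition \ref{PropRegSurNormalSontPoly} is already in hand and it fits the surrounding narrative built around $X\to X^+\to X'$; yours is more self-contained commutative algebra, bypasses the normalization and the citation to Iitaka entirely, and in fact reproves the normal case (Proposition \ref{PropRegSurNormalSontPoly}) as a by-product by taking $x$ to range over all of $X(\C)$. The only points worth flagging, neither of which is a gap, are the implicit use of openness of the regular locus (standard for varieties over a field, and used elsewhere in the paper) and the observation that $f$, being rational, lives in $\K(X)$ so that the intersection of the $(\Ocal_{X,x})_{\p}$ is taken in the right place; your reduction to the irreducible case via the fact that a local integrally closed reduced ring is a domain is also sound.
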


\begin{proof}
Let $\pi' : X' \to X$ be the normalization morphism of $X$ and $f\in \KO(X(\C))$. Proposition \ref{PropRegSurNormalSontPoly} tells us that  $f\circ\pi' \in \C[X']$. Let $x\in \Norm(X(\C))$, then there exists a unique $x'\in X'$ such that $\pi'(x')=x$ and so $\Ocal_{X,x} \inj \Ocal_{X',x'}$. Since normalization commutes with localisation (one can see \cite{AM} for example), we get $\Ocal_{X,x}' \simeq \Ocal_{X',x'}$. But $x \in \Norm(X(\C))$ implies $\Ocal_{X,x} = \Ocal_{X,x}'$ so $$ \Ocal_{X,x} \simeq \Ocal_{X',x'} $$
Finally, since $f\circ \pi' \in \Ocal_{X',x'}$, we get $f\in \Ocal_{X,x}$.
\end{proof}

\begin{rmq}
A UFD being integrally closed, we have $X_{reg} \subset \Norm(X)$. So Proposition \ref{PropRegSurPtsNormaux} implies Proposition \ref{PropRatEstRegSurRegX}.
\end{rmq}

Before continuing the study of continuous rational functions, we have to establish some properties of finite morphisms that we will need later.
\begin{lemma}\label{LemMorphFiniDoncFerme}
Let $\pi : Y \rightarrow X$ be a finite morphism of affine varieties. Then 
$$\pik : Y(\C) \rightarrow X(\C)\text{ is surjective and closed for the Euclidean topology.}$$
\end{lemma}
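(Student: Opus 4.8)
The plan is to treat the two assertions separately: surjectivity comes straight from the algebraic Lying-Over property, while closedness follows from the fact that a finite morphism induces a proper map on complex points.

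For surjectivity, recall that $\pi$ finite means $\C[X]\inj\C[Y]$ is an integral extension. Let $\m\in X(\C)=\spm(\C[X])$. By the Lying-Over property (Lemma \ref{LemGoingUp}, 1)) there is $\q\in\spec(\C[Y])$ with $\q\cap\C[X]=\m$, and by Lemma \ref{LemGoingUp}, 2) this $\q$ is maximal. Since $\C[Y]$ is a finitely generated $\C$-algebra and $\C$ is algebraically closed, the Nullstellensatz forces $\kappa(\q)=\C$, so $\q\in Y(\C)$ and $\pik(\q)=\m$. As every maximal ideal of $\C[Y]$ similarly has residue field $\C$, this shows $\pik$ maps $Y(\C)$ onto $X(\C)$.

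For closedness I would first realize $Y(\C)$ concretely. Writing $\C[Y]=\C[X][g_1,\dots,g_m]$ as a finite $\C[X]$-module, each generator $g_i$ satisfies a monic relation $g_i^{d_i}+a_{i,d_i-1}g_i^{d_i-1}+\dots+a_{i,0}=0$ with $a_{i,j}\in\C[X]$. The resulting $\C$-algebra surjection $\C[X][T_1,\dots,T_m]\twoheadrightarrow\C[Y]$ yields a closed immersion, hence an embedding $\iota:Y(\C)\inj X(\C)\times\C^m$, $y\mapsto(\pik(y),g_1(y),\dots,g_m(y))$, with Euclidean-closed image and through which $\pik$ is the first projection. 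The key step is then to show $\pik$ is \emph{proper}, i.e.\ that $\pik^{-1}(K)$ is compact for every compact $K\subseteq X(\C)$. For $y\in\pik^{-1}(K)$ with $x=\pik(y)$, each $g_i(y)$ is a root of the monic polynomial with coefficients $a_{i,j}(x)$, so the Cauchy bound gives $|g_i(y)|\leqslant 1+\max_j|a_{i,j}(x)|$; since the $a_{i,j}$ are continuous and $K$ is compact, these bounds are uniform over $K$, say by $R_i$. Hence $\iota(\pik^{-1}(K))$ lies in the compact set $K\times\prod_{i=1}^m\overline{D(0,R_i)}$ and, being also closed, is compact, so $\pik^{-1}(K)$ is compact.

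Finally, properness implies closedness because all spaces are metrizable subsets of affine space. Given a Euclidean-closed $C\subseteq Y(\C)$ and $x\in\overline{\pik(C)}$, I would pick $c_n\in C$ with $\pik(c_n)\to x$; the set $\{x\}\cup\{\pik(c_n)\}_n$ is compact, so its preimage is compact and contains every $c_n$, whence a subsequence $c_{n_\ell}$ converges to some $c\in C$, and continuity gives $\pik(c)=x$, so $x\in\pik(C)$. The main obstacle is the properness step, namely the uniform boundedness of the fibre coordinates over a compact base together with the closedness of $\iota(Y(\C))$; once properness is established, the passage to closedness is routine via sequences.
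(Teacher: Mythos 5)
Your proposal is correct and follows essentially the same route as the paper: surjectivity via Lying-Over together with the Nullstellensatz, and closedness by proving $\pik$ is proper, bounding the fibre coordinates over a compact base through their monic integral equations over $\C[X]$. You are somewhat more explicit than the paper (the Cauchy bound, the closed embedding into $X(\C)\times\C^m$, and the sequence argument that properness implies closedness, which the paper simply cites as a standard fact), but the underlying idea is identical.
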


\begin{proof}
Proposition \ref{LemGoingUp} tells us that the morphism $\pi$ is surjective and that the inverse image of $X(\C)$ is $Y(\C)$. It gives us the surjectivity of $\piC : Y(\C) \rightarrow X(\C)$.\\
To show that $\piC$ is closed, we are going to prove that it is proper for the Euclidean topology.
Let $K$ be a compact subset of $X(\C)$. We first have that $\piC^{-1}(K)$ is closed because $\piC$ is continuous. Suppose $\C[Y]=\C[y_1,...,y_n]/I_Y$ and $Y(\C) \subset \C^n$.
We write $Y_i : y \mapsto y_i$ the map giving the $i^{th}$ coordinate of an element of $Y(\C)$. We then have $Y_i\in \C[Y]$ and, since by hypothesis $\C[Y]$ is a finite $\C[X]$-module, there exists an identity of the form
$$ Y_i^k + a_{k-1}\circ \piC. Y_i^{k-1}+...+a_0\circ \piC = 0 \text{ with  }a_i \in \C[X] $$
Let $y \in \piC^{-1}(K)$. We write $\piC(y)=x$. If $y_i\neq 0$, then

$$\begin{array}{ll}
     & Y_i^k(y) + a_{k-1}\circ \piC(y). Y_i^{k-1}(y)+...+a_0\circ\piC(y) = 0  \\ [0.2cm]
     \implies & y_i^k+a_{k-1}(x)y_i^{k-1} + ... + a_0(x) = 0 \\[0.2cm]
     \implies & 1+a_{k-1}(x)/y_i + ... + a_0(x)/y_i^k = 0
\end{array} $$

As $K$ is a compact set, the $a_j(K)$ are bounded. So 
$$\forall (y_n)_n\in \piC^{-1}(K)^{\mathbb{N}}\text{\hspace{0.5cm}} Y_i(y_n) \nrightarrow +\infty$$
This means that $Y_i(\piC^{-1}(K))$ is bounded for all $i$ and so that $\piC^{-1}(K)$ is a compact set. More generally, we have shown :
\begin{center}
    $\piC : Y(\C) \to X(\C)$ is proper for the Euclidean topology. 
\end{center} 
Since a proper continuous map is closed, the lemma is proved.
\end{proof}

\begin{rmq}
What we have just shown implies that for every finite morphism $\pi : Y \to X$ of affine varieties with $\piC$ bijective, the morphism $\piC$ is an Euclidean homeomorphism.
\end{rmq}

Henceforth, for any given morphism $\pi : Y \to X$ of affine varieties, we shall write $\varphi : \KO(X(\C)) \to \Ccal(Y(\C),\C)$ the map $f \mapsto f\circ \piC$. The purpose of this notation is to differentiate this map from the morphism $\pi^* : \C[X] \to \C[Y]$. We will see that if $\pi$ is a finite morphism we can determine the image of $\varphi$. This will be useful for us since the normalization and seminormalization morphisms are finite.

\begin{lemma}\label{lemPhiInj}
Let $\pi : Y \rightarrow X$ be a surjective morphism of affine varieties. Then $\varphi$ is injective and $$\im(\varphi) \subset \{ f\in \KO(Y(\C))\text{ with $f$ constant on the fibers of }\piC \}$$
\end{lemma}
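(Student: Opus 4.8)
The statement splits into three claims: that $\varphi(f)=f\circ\piC$ lands in $\KO(Y(\C))$, that it is constant on the fibers of $\piC$, and that $\varphi$ is injective. The plan is to dispatch the two formal claims first. That $\varphi(f)$ is constant on fibers is immediate from the definition: if $y_1,y_2\in Y(\C)$ satisfy $\piC(y_1)=\piC(y_2)$, then $\varphi(f)(y_1)=f(\piC(y_1))=f(\piC(y_2))=\varphi(f)(y_2)$. For injectivity, suppose $f\circ\piC=g\circ\piC$; once $\piC$ is known to be onto $X(\C)$, every $x\in X(\C)$ equals $\piC(y)$ for some $y$, whence $f(x)=g(x)$ and $f=g$. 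Surjectivity of $\piC$ follows from that of $\pi$: for $x\in X(\C)$ the scheme-theoretic fiber $\spec(\C[Y]\otimes_{\C[X]}\kappa(\m_x))$ is a nonempty $\C$-algebra of finite type, so by the Nullstellensatz it has a point with residue field $\C$, i.e. a point of $Y(\C)$ over $x$.

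The substantive point is that $\varphi(f)$ is continuous and rational on $Y(\C)$. Continuity is free, since $\piC$ is continuous for the Euclidean topology and $f$ is continuous. For rationality, I would take a Z-dense Z-open set $U$ of $X$ on which $f$ is regular (for instance $U=X_{reg}$, using Proposition \ref{PropRatEstRegSurRegX}); then $f\circ\pi$ is regular on the Z-open set $\pi^{-1}(U)$ as the pullback of a regular function. By Lemma \ref{Lemirreducible} it suffices to check $\varphi(f)$ is rational on each irreducible component $Y_j$ of $Y$, so I would restrict to $Y_j$ and set $Z_j:=\overline{\pi(Y_j)}$, an irreducible subvariety of $X$ onto which $\pi|_{Y_j}$ is dominant. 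Provided $U$ meets $Z_j$, the set $(\pi|_{Y_j})^{-1}(U)$ is a nonempty, hence Z-dense, Z-open subset of $Y_j$ on which $f\circ\pi$ is regular; assembling these over all $j$ via Lemma \ref{LemUnionDenseEstDense} produces a Z-dense Z-open subset of $Y$ on which $\varphi(f)$ is regular.

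The main obstacle is exactly the proviso that $U$ meet $Z_j$: a component $Y_j$ may be carried into a proper subvariety $Z_j\subsetneq X_i$ that the regular locus of $f$ avoids, in which case $\pi^{-1}(U)$ fails to be dense along $Y_j$. To address this I would exploit that regularity of $f$ is local and is inherited under pullback: whenever $\pi(y)$ is a normal point of $X$, the morphism induces a local homomorphism $\Ocal_{X,\pi(y)}\to\Ocal_{Y,y}$, and Proposition \ref{PropRegSurPtsNormaux} gives $f\in\Ocal_{X,\pi(y)}$, hence $f\circ\pi\in\Ocal_{Y,y}$; so $f\circ\pi$ is already regular on $\pi^{-1}(\Norm(X(\C)))$. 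The residual difficulty is density of this set along components mapping into the non-normal locus, where the rationality of $\varphi(f)$ genuinely reflects that the restriction of $f$ to $Z_j$ is rational. I expect this to be the delicate step, and the cleanest route is to reduce via Lemma \ref{Lemirreducible} to the case $Y$ irreducible and argue on $Z_j$ directly.
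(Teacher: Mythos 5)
Your handling of the two formal points is fine and agrees with the paper: constancy on fibers is definitional, and injectivity follows from surjectivity of $\piC$, which you justify via the Nullstellensatz more carefully than the paper does (it simply asserts that $\pi$ surjective gives $\piC$ surjective). The gap is in the remaining claim, that $f\circ\piC$ is rational on $Y(\C)$. Your argument pulls back the regular locus $U$ of $f$ and then, as you yourself observe, breaks down on any irreducible component $Y_j$ of $Y$ whose image closure $Z_j$ is a proper subvariety of $X$ avoided by $U$ (and by $\Norm(X(\C))$, so Proposition \ref{PropRegSurPtsNormaux} does not help there either). You end by saying you ``expect this to be the delicate step'' and sketching a reduction to arguing on $Z_j$ directly; that is not an argument, and the rationality of $f_{|Z_j(\C)}$ is precisely what is in question. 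You also cannot invoke Proposition \ref{PropRatContSontRegulues} (restrictions of continuous rational functions to subvarieties are rational), since that result is proved later and depends on the present lemma.

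The device the paper uses, and which your proposal never invokes, is to promote the rational representation to a global identity before pulling back: write $qf-p=0$ with $q$ a non-zero-divisor; this holds on a Z-dense, hence Euclidean-dense, open set, and $qf-p$ is Euclidean-continuous, so $qf-p=0$ on all of $X(\C)$. Pulling back gives $(q\circ\piC)(f\circ\piC)-(p\circ\piC)=0$ on every component $Y_i(\C)$ in its entirety, so $f\circ\piC|_{Y_i}=(p\circ\piC)/(q\circ\piC)\in\K(Y_i)$ as soon as $q\circ\pi$ is not identically zero on $Y_i$; Lemma \ref{Lemirreducible} then assembles the components. This extension-by-continuity step is the one piece of substance in the proof and is absent from your write-up. (To be fair, the paper is itself silent about a component on which every admissible denominator $q$ pulls back to the zero function, i.e.\ one whose image lies inside the indeterminacy locus of $f$ --- the very case you flagged --- so your diagnosis of where the difficulty sits is accurate; but flagging it is not resolving it, and as written your proposal does not reach a complete proof of the inclusion $\im(\varphi)\subset\KO(Y(\C))$.)
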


\begin{proof}
If $f\in \KO(X(\C))$ then we can consider $p,q \in \C[X]$ with $q$ a non-zero-divisor such that $q.f-p=0$ on $X(\C)$. Write $Y = \bigcup_{i=1}^n Y_i$ the decomposition in irreducible component of $Y$. Then, for all $i\in \llbracket 1;n \rrbracket$, we get $q\circ\pi_{\mid_{Y_i(\C)}}.f\circ\pi_{\mid_{Y_i(\C)}} - p\circ\pi{_{\mid_{Y_i(\C)}}} =0$ on $Y_i(\C)$. Thus $f\circ\pi_{\mid_{Y_i(\C)}} \in \K(Y_i)$ and since $\piC$ is continuous, we have $f\circ\pi{_{\mid_{Y_i(\C)}}}\in \KO(Y_i(\C))$. By Lemma \ref{Lemirreducible}, we get $f\circ\piC \in \KO(Y(\C))$. Moreover
$$\forall y_1,y_2 \in Y(\C) \text{\hspace{0.5cm} } \piC(y_1)= \piC(y_2) \implies f\circ\piC(y_1)= f\circ\piC(y_2)$$
which shows the lemma's inclusion. It remains to prove the injectivity of $\varphi$ :\\
Let $f \in \KO(X(\C))$ be such that $\varphi(f) = f\circ \piC = 0$ and take $x\in X(\C)$. As $\pi$ is surjective, there exists $y\in Y(\C)$ such that $\pi(y)=x$. So we can write $f(x) = f\circ\piC(y) = 0$. Thus we get $f=0$ which shows that $\varphi$ is injective.
\end{proof}

We deduce from Lemma \ref{lemPhiInj} that continuous rational functions on an affine variety can be seen as polynomial functions on its normalization.

\begin{prop}\label{PropRatiContEstEntiere}
Let $X$ be an affine variety and $f\in \KO(X(\C))$. Then $f\text{ is integral on }\C[X]$.
\end{prop}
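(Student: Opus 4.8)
The plan is to reduce to the normalization and exploit the fact, already available from Proposition~\ref{PropRegSurNormalSontPoly}, that a continuous rational function pulled back to a normal variety becomes polynomial. So the first step is to consider the normalization morphism $\pi' : X' \to X$ and its restriction $\piC'$. Since $\pi'$ is finite and surjective (Lemma~\ref{LemMorphFiniDoncFerme}), Lemma~\ref{lemPhiInj} applies to give that, for $f \in \KO(X(\C))$, the pullback $f \circ \piC'$ lies in $\KO(X'(\C))$. But $X'$ is normal, so Proposition~\ref{PropRegSurNormalSontPoly} upgrades this to $f \circ \piC' \in \C[X']$. Thus $f$, viewed through the normalization, is actually a polynomial function on $X'$.

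Next I would translate this into an integral dependence relation over $\C[X]$. The key point is that $\C[X'] = \C[X]'$ is the integral closure of $\C[X]$ in $\K(X)$, and in particular $\C[X']$ is a finite $\C[X]$-module. Writing $g := f\circ \pi'^* \in \C[X']$ (I mean the element of $\C[X']$ corresponding to $f\circ\piC'$ via the Nullstellensatz dictionary between functions on $X'(\C)$ and elements of $\C[X']$), finiteness of $\C[X']$ over $\C[X]$ gives at once that $g$ is integral over $\C[X]$: there is a monic polynomial $P \in \C[X][T]$ with $P(g)=0$ in $\C[X']$. Evaluating this relation as an identity of functions on $X'(\C)$ yields $P(f\circ\piC')=0$ pointwise, i.e. $P(f)\circ\piC' = 0$ on $X'(\C)$.

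The remaining step is to descend this relation from $X'(\C)$ back to $X(\C)$ to conclude that $f$ itself satisfies $P(f)=0$, hence is integral over $\C[X]$. Here is where surjectivity of $\piC'$ is decisive: the coefficients of $P$ lie in $\C[X]$, so $P(f)$ is a well-defined function on $X(\C)$ (it is a polynomial expression in $f$ and in the pulled-back coordinate functions from $X$), and $(P(f))\circ\piC' = P(f\circ\piC') = 0$ on all of $X'(\C)$. Since $\piC' : X'(\C) \to X(\C)$ is surjective by Lemma~\ref{LemMorphFiniDoncFerme}, every point of $X(\C)$ is hit, so $P(f)$ vanishes identically on $X(\C)$. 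Therefore $f$ is a root of the monic polynomial $P \in \C[X][T]$, which is exactly the assertion that $f$ is integral over $\C[X]$.

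The one point that deserves care—and which I expect to be the main subtlety rather than a deep obstacle—is the bookkeeping of the two parallel pictures: the algebraic relation $P(g)=0$ lives in the ring $\C[X']$, while the conclusion is about $f$ as a genuine function on $X(\C)$. One must be sure that the coefficients $a_i \in \C[X]$ of $P$ act compatibly, i.e. that $a_i\circ\piC' $ is the function on $X'(\C)$ corresponding to $a_i$ viewed in $\C[X'] \supseteq \C[X]$, so that pulling back the functional identity $P(f)=0$ along $\piC'$ matches the algebraic identity $P(g)=0$. This is just the coherence of the inclusion $\C[X]\inj\C[X']$ with composition by $\piC'$, together with the injectivity of $\varphi$ from Lemma~\ref{lemPhiInj} if one prefers to phrase the descent that way; no estimate or topological argument beyond the already-proved surjectivity of $\piC'$ is needed.
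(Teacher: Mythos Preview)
Your proof is correct and follows essentially the same route as the paper: pull back $f$ along the normalization $\pi':X'\to X$, use Proposition~\ref{PropRegSurNormalSontPoly} to get $f\circ\piC'\in\C[X']$, extract a monic relation from integrality of $\C[X']$ over $\C[X]$, and descend it to $X(\C)$ via surjectivity of $\piC'$. The paper's version is terser but the logic is identical.
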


\begin{proof}
Let $f\in \KO(X(\C))$ and $\pi : X' \to X$ be the normalization morphism of $X$. Since $\pi$ is a finite morphism, the previous lemma gives us $\KO(X(\C))\inj \KO(X'(\C))$. But, by Proposition \ref{PropRegSurNormalSontPoly}, we get $\KO(X'(\C)) = \C[X']$ so $\KO(X(\C)) \inj \C[X']$ and so $f\circ \piC \in  \C[X']$. By definition of $\C[X']$, we can consider a relation of the form $(f\circ\piC)^n+(a_{n-1}\circ\piC) (f\circ\piC)^{n-1}+...+(a_0\circ\piC) = 0$ on $X'(\C)$ with $a_0,...,a_{n-1} \in \C[X]$. Since $\piC$ is surjective, we get that $f^n+a_{n-1} f^{n-1}+...+a_0 = 0$ on $X(\C)$. Hence $f$ is integral on $\C[X]$.
\end{proof}

\begin{rmq}
In general, a function $f : X(\C) \to \C$ is integral on $\C[X]$ if and only if it is rational and locally bounded on $X(\C)$.
\end{rmq}

We obtain the following commutative diagram which sums up the situation : 
$$\xymatrix{
    \KO(X(\C)) \ar@{^{(}->}[r]^{\varphi} & \KO(X'(\C)) \\
    \C[X] \ar@{^{(}->}[u] \ar@{^{(}->}[r]& \C[X'] \ar@{=}[u]
}$$

As previously announced, we are going to give a description of the image of $\varphi$ in the case $\pi$ is finite.
\begin{prop}\label{PropConstanteSurLesFibres}
Let $\pi : Y \rightarrow X$ be a finite morphism of affine varieties. Then the image of $\varphi : \KO(X(\C)) \to \KO(Y(\C))$ is
$$\im(\varphi) = \{ f\in \KO(Y(\C))\text{ with $f$ constant on the fibers of }\piC \}$$
\end{prop}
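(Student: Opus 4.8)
By Lemma~\ref{lemPhiInj} we already have the inclusion
$$\im(\varphi) \subseteq \{ f\in \KO(Y(\C))\text{ with $f$ constant on the fibers of }\piC \},$$
so the whole content of the proposition is the reverse inclusion. Thus I would start from a function $g\in\KO(Y(\C))$ that is constant on the fibers of $\piC$, and try to produce $f\in\KO(X(\C))$ with $g = f\circ\piC$. The set-theoretic candidate is forced: since $\piC$ is surjective (Lemma~\ref{LemMorphFiniDoncFerme}) and $g$ is constant on fibers, there is a unique function $f:X(\C)\to\C$ with $g = f\circ\piC$, defined by $f(x) = g(y)$ for any $y\in\piC^{-1}(x)$. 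The real work is to check that this $f$ is both \emph{continuous} for the Euclidean topology and \emph{rational} on $X(\C)$.

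\emph{Continuity.} This is where I would lean on the topological properties of finite morphisms. Since $\piC$ is continuous, closed, and surjective (Lemma~\ref{LemMorphFiniDoncFerme}), it is a topological quotient map for the Euclidean topology. A function $f$ on the quotient is continuous exactly when $f\circ\piC$ is continuous; as $f\circ\piC = g$ is continuous by assumption, $f$ is Euclidean-continuous. Concretely, to see that $\piC$ is a quotient map one checks that a subset $V\subseteq X(\C)$ is closed whenever $\piC^{-1}(V)$ is closed: then $\piC^{-1}(V)$ closed implies $\piC(\piC^{-1}(V)) = V$ (using surjectivity) is closed since $\piC$ is a closed map, so $V$ is closed.

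\emph{Rationality.} This is the step I expect to be the main obstacle, since ``constant on fibers'' is a set-theoretic condition that a priori says nothing algebraic. The plan is to reduce to the normalization, where everything is polynomial. By Proposition~\ref{PropRatiContEstEntiere}, $g$ is integral over $\C[Y]$, and composing with the normalization morphism $\pi_{Y}':Y'\to Y$ gives $g\circ\pi'_{Y(\C)}\in\C[Y']$ by Proposition~\ref{PropRegSurNormalSontPoly}. I would then work over a Zariski-dense Z-open set $U\subseteq X$ over which $\pi$ is as well-behaved as possible—restricting to the irreducible case via Lemma~\ref{Lemirreducible}, one may shrink $U$ so that $\pi^{-1}(U)\to U$ is finite \'etale with all fibers of size $[\K(Y):\K(X)]$ (Lemma~\ref{LemRamifié}). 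On such a $U$, the value $f(x)$ is a symmetric expression in the values of the regular function $g$ at the finitely many points of the fiber $\piC^{-1}(x)$ (indeed, as $g$ is constant on the fiber, $f(x)$ equals the average, or any elementary symmetric combination normalized appropriately, of those values). Symmetric functions of the fiber values of an element of $\C[Y]$ descend to elements of $\C[X]$ via the trace/norm for the finite extension $\K(X)\inj\K(Y)$; this exhibits $f$ as a rational function on $U$, hence on $X$.

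\emph{Putting it together.} Once $f$ is rational on a Z-dense Z-open set and Euclidean-continuous on all of $X(\C)$, it lies in $\KO(X(\C))$ by definition, and by construction $\varphi(f) = f\circ\piC = g$. This gives the reverse inclusion and completes the proof. The delicate point throughout is keeping track of the two topologies simultaneously: the quotient-map argument handles continuity cleanly, but the descent of rationality genuinely uses that constancy on fibers lets the fiberwise-constant value be recovered as a symmetric—hence rational—function on the base, and one must verify that the rational function so obtained agrees with the pointwise-defined $f$ on a dense set, so that Euclidean continuity forces agreement everywhere.
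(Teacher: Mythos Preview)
Your continuity argument is exactly the paper's: both use that $\piC$ is surjective and closed (Lemma~\ref{LemMorphFiniDoncFerme}), hence a quotient map, so $f$ is continuous because $f\circ\piC=g$ is.

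For rationality, however, you and the paper take genuinely different routes. The paper does \emph{not} use a trace/symmetric-function descent. Instead it argues: $g=f\circ\piC\in\KO(Y(\C))$ is integral over $\C[Y]$ (Proposition~\ref{PropRatiContEstEntiere}), hence integral over $\C[X]$ by transitivity, hence $f$ itself is integral over $\C[X]$ (using surjectivity of $\piC$); and then it invokes the forward reference to Corollary~\ref{CoroClotureDansFoncCont} (a continuous function integral over $\C[X]$ lies in $\KO(X(\C))$) to conclude. Your approach avoids that forward reference entirely, which is a real advantage in terms of logical flow---the paper's proof of Proposition~\ref{PropConstanteSurLesFibres} literally cites a result ``that we prove further away''. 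On the other hand, the paper's argument is shorter and does not need to shrink open sets or worry about fiber structure.

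Two small imprecisions in your version are worth tightening. First, Lemma~\ref{LemRamifié} requires both source and target irreducible, but reducing via Lemma~\ref{Lemirreducible} only makes $X$ irreducible; $Y$ may still be reducible. The fix is easy: pick any irreducible component $Y_j$ of $Y$ dominating $X$ and run the trace argument for $\K(X)\hookrightarrow\K(Y_j)$ with $g|_{Y_j}$. Second, you write ``an element of $\C[Y]$'' when $g$ is only in $\K(Y)$; this is harmless since you may shrink $U$ so that $g$ is regular on $\pi^{-1}(U)$, and then the trace of $g$ lands in $\Ocal_X(U)\subseteq\K(X)$, which is all you need. The detour through the normalization $Y'$ that you mention is not actually used in your argument and can be dropped.
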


\begin{proof}
Let $f \in \KO(Y(\C))$ be such that $$\forall y_1,y_2\in Y(\C) \text{ \hspace{0.5cm}} \piC(y_1) = \piC(y_2) \implies f(y_1) = f(y_2)$$
We consider
$$\begin{array}{cccccl}
g & : & X(\C)& \longrightarrow& \C&\\
&& x &\mapsto &f(y_i) &\text{with }y_i \in \piC^{-1}(\{x\})
\end{array}$$

The map is well defined by hypothesis and because $\pi$ is surjective.
Moreover we have $f = g \circ \piC$ and $f\in \Ccal^0(Y(\C),\C)$ so $g\circ \piC \in \Ccal^0(Y(\C),\C)$.\\
We now show that $g \in \Ccal^0(X(\C),\C)$. Let $F$ be a closed subset of $\C$. Then $(g\circ\piC)^{-1}(F) = \piC^{-1}(g^{-1}(F))$ is closed because $g\circ \piC =f$ is continuous and $\piC(\piC^{-1}(g^{-1}(F))) = g^{-1}(F)$ because $\piC$ is surjective. Thus, since $\piC$ is closed (see lemma \ref{lemPhiInj}), the set $g^{-1}(F)$ is closed and so $g$ is continuous.\smallskip \\
It remains to prove that $g$ is a rational function. We have $$\begin{array}{rll}
     g\circ\piC \in \KO(Y(\C)) & \implies g\circ\piC \text{ is integral on }\C[Y] &\text{ by Proposition \ref{PropRatiContEstEntiere}}  \\
     & \implies g\circ \piC \text{ is integral on }\C[X] &\text{ by \cite{AM} Corollary 5.4 p.60} \\
     & \implies g \text{ is integral on }\C[X] &\text{ because }\pi\text{ is surjective}\\
     & \implies g \in \KO(X(\C)) &\text{ by Proposition \ref{CoroClotureDansFoncCont} that we prove further away.}
\end{array}$$

This shows $f\in \im(\varphi)$. So we have proved that the set of functions in $\KO(Y(\C))$ which are constant on the fibers of $\piC$ is included in $\im(\varphi)$. The reverse inclusion being given by Lemma \ref{lemPhiInj}, we finally get
$$\im(\varphi) = \{ f\in \KO(Y(\C))\text{ with $f$ constant on the fibers of }\piC \}$$
\end{proof}

Now, considering the last proposition, it is natural to wonder what happens when there is only one element in every fiber of $\piC$. The answer is given in the following proposition.

\begin{prop}\label{PropSubEquiMemeFctRatioCont}
Let $\pi : Y \to X$ be a finite morphism of affine varieties. Then the following properties are equivalent :
\begin{enumerate}
    \item[1)] The extension $\pi^* : \C[X] \inj \C[Y]$ is subintegral.
    \item[2)] The morphism $\varphi: \KO(X(\C)) \to \KO(Y(\C))$ is an isomorphism.
    \item[3)] The morphism $\piC$ is a homeomorphim for the Euclidean topology.
\end{enumerate}
\end{prop}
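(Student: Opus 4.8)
The plan is to establish the cycle of implications $1) \Rightarrow 2) \Rightarrow 3) \Rightarrow 1)$, leaning on Theorem \ref{TheoSubSsiBij} to translate subintegrality into bijectivity of $\piC$, and on Proposition \ref{PropConstanteSurLesFibres} to control the image of $\varphi$. The observation tying everything together is that, for a finite morphism, the surjectivity of $\varphi$ is exactly the assertion that all fibers of $\piC$ are singletons.

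For $1) \Rightarrow 2)$, I would first invoke Theorem \ref{TheoSubSsiBij} to see that subintegrality of $\pi^*$ forces $\piC$ to be bijective, so every fiber of $\piC$ reduces to a single point. Then any function on $Y(\C)$ is trivially constant on the fibers, and Proposition \ref{PropConstanteSurLesFibres} gives $\im(\varphi) = \KO(Y(\C))$, i.e. $\varphi$ is surjective. Since $\pi$ is finite, hence surjective by Lemma \ref{LemMorphFiniDoncFerme}, Lemma \ref{lemPhiInj} already supplies the injectivity of $\varphi$, so $\varphi$ is an isomorphism.

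The heart of the argument is $2) \Rightarrow 3)$. Assuming $\varphi$ is an isomorphism, in particular surjective, Proposition \ref{PropConstanteSurLesFibres} tells us that every element of $\KO(Y(\C))$ is constant on the fibers of $\piC$; this applies in particular to every polynomial function in $\C[Y] \subseteq \KO(Y(\C))$. The key observation is then that $\C[Y]$ separates the points of $Y(\C)$: identifying $Y(\C)$ with an algebraic subset of some $\C^n$, two distinct points already differ in one coordinate function. Consequently no two distinct points of $Y(\C)$ can lie in the same fiber, so $\piC$ is injective; being also surjective by Lemma \ref{LemMorphFiniDoncFerme}, it is bijective. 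Finally the remark following Lemma \ref{LemMorphFiniDoncFerme} upgrades a bijective finite $\piC$ to a homeomorphism for the Euclidean topology.

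The implication $3) \Rightarrow 1)$ is immediate: a homeomorphism is in particular bijective, so Theorem \ref{TheoSubSsiBij} returns the subintegrality of $\pi^*$. I expect the main obstacle to be step $2) \Rightarrow 3)$, namely convincing oneself that surjectivity of $\varphi$ really does rule out nontrivial fibers; the crux is the separation-of-points property of $\C[Y]$, which forbids any genuinely merged fiber from being compatible with every polynomial being fiber-constant.
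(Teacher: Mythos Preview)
Your proof is correct and follows essentially the same approach as the paper. The only cosmetic difference is that the paper establishes $1)\Leftrightarrow 2)$ directly (using the contrapositive ``$\piC$ not bijective $\Rightarrow$ $\varphi$ not surjective'' via a separating polynomial in $\C[Y]$, exactly your separation-of-points argument) and then appeals to the remark after Lemma~\ref{LemMorphFiniDoncFerme} for the equivalence with $3)$, whereas you organize the same ingredients into a cycle $1)\Rightarrow 2)\Rightarrow 3)\Rightarrow 1)$.
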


\begin{proof}
Suppose $\C[X]\inj \C[Y]$ is subintegral, by Theorem \ref{TheoSubSsiBij} it means that $\piC : Y(\C)\to X(\C)$ is bijective. So, in this case, every function of $\KO(Y(\C))$ is clearly constant on the fibers of $\piC$. So, by Proposition \ref{PropConstanteSurLesFibres}, the map $\varphi$ is surjective and Lemma \ref{lemPhiInj} gives us the injectivity.\\

Conversely, if $\piC$ is not bijective, there exists $y_1 \neq y_2 \in Y(\C)$ such that $\pi(y_1) = \pi(y_2)$ and we can find $f \in \C[Y]$ such that $f(y_1) \neq f(y_2)$. Thus $f\in \KO(Y(\C))$ but $f \notin \im(\varphi)$.\\

Finally
$$\begin{array}{ccl}
\varphi : \KO(X(\C))\to \KO(Y(\C)) \text{ is an isomorphism} & \iff & \piC : Y(\C)\to X(\C) \text{ is bijective}\\
& \iff &\pi^* : \C[X] \inj \C[Y]\text{ subintegral }
\end{array}$$
The third statement comes from Lemma \ref{LemMorphFiniDoncFerme}
\end{proof}

Let $X$ be an affine variety, the coordinate ring of $X^+$ is the largest ring subintegral over $\C[X]$. Thus for every finite morphism between two varieties $\pi : Y\to X$ with $\C[X]\inj \C[Y]$ subintegral, we obtain the following commutative diagram :
$$\xymatrix{
    \KO(X(\C)) \ar[r]^{\simeq}& \KO(Y(\C)) \ar[r]^{\simeq} & \KO(X^+(\C)) \ar@{^{(}->}[r] & \KO(X'(\C))\\
    \C[X] \ar@{^{(}->}[u] \ar@{^{(}->}[r]^{subint.} & \C[Y] \ar@{^{(}->}[u] \ar@{^{(}->}[r]^{subint.} & \C[X^+] \ar@{^{(}->}[u] \ar@{^{(}->}[r] & \C[X'] \ar@{=}[u]
}$$
We will complete the diagram by showing $\C[X^+] = \KO(X^+(\C))$. We prove it in the next theorem by saying that the polynomial functions on the seminormalization are the polynomial functions on the normalization which are constant on the fibers of $\piC' : X'(\C) \to X(\C)$.\bigskip \\

\begin{theorem}\label{TheoRatioContEgalPolySN}
Let $X$ be an affine complex variety and $\pi^+ : X^+ \to X$ be the seminormalization morphism. We have the following isomorphism
$$\begin{array}{lccc}
     \varphi : & \KO(X(\C)) & \xrightarrow{\sim} & \C[X^+]\\
     & f & \mapsto & f\circ \piC^+ 
\end{array}$$
\end{theorem}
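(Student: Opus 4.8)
The plan is to route the whole comparison through the normalization $\pi' : X' \to X$, which is precisely where Proposition~\ref{PropRegSurNormalSontPoly} already identifies the continuous rational functions as polynomials. Recall the tower $\C[X] \inj \C[X^+] \inj \C[X']$, which geometrically reads $X' \xrightarrow{\sigma} X^+ \xrightarrow{\pi^+} X$ with $\pi' = \pi^+\circ\sigma$; here $X'$ is simultaneously the normalization of $X$ and of $X^+$ (same fraction field $\K(X)$, same integral closure $\C[X']$), so $\sigma$ is finite and surjective. First I would study the pullback $\psi : \KO(X(\C)) \to \C[X']$, $f\mapsto f\circ\piC'$. By Lemma~\ref{lemPhiInj} it is injective, and by Proposition~\ref{PropConstanteSurLesFibres} together with $\KO(X'(\C)) = \C[X']$ (Proposition~\ref{PropRegSurNormalSontPoly}, $X'$ being normal) its image is exactly
$$\im(\psi) = \{\, g\in\C[X'] \mid g \text{ is constant on the fibers of } \piC' \,\}.$$

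The crux is then the purely algebraic identity $\C[X^+] = \{\, g\in\C[X'] \mid g \text{ constant on the fibers of } \piC' \,\}$ inside $\C[X']$. I would prove it using Corollary~\ref{CorSemiMaxEstSemi}, which allows testing the seminormalization on maximal ideals only:
$$\C[X^+] = \C[X]^{+_{max}}_{\C[X']} = \{\, g\in\C[X'] \mid \forall\, \m\in\spm(\C[X]),\ g_\m \in \C[X]_\m + \Rrm(\C[X']_\m) \,\}.$$
Since $\C$ is algebraically closed and $\C[X]$ is of finite type, every $\m\in\spm(\C[X])$ is $\m_x$ for a unique $x\in X(\C)$, the primes of $\C[X']$ over $\m$ are the maximal ideals $\m_y$ with $y\in(\piC')^{-1}(x)$, and by the remark following Lemma~\ref{LemMaxLoc} one has $\Rrm(\C[X']_\m) = \bigcap_{y\in(\piC')^{-1}(x)} \m_y\C[X']_\m$. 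For $\subseteq$, writing $g_\m = \alpha + \beta$ with $\alpha\in\C[X]_\m$ and $\beta\in\Rrm(\C[X']_\m)$, the value $g(y)$ at any $y$ over $x$ equals $\alpha(x)$ (as $\beta$ dies modulo $\m_y$ and every residue field is $\C$), so $g$ is constant on the fiber. For $\supseteq$, if $g$ takes the single value $c\in\C$ on $(\piC')^{-1}(x)$, then $g - c \in \m_y$ for each such $y$, hence $g_\m - c \in \Rrm(\C[X']_\m)$, and since the constant $c$ lies in $\C\subseteq\C[X]_\m$ we get $g_\m \in \C[X]_\m + \Rrm(\C[X']_\m)$. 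The availability of constants in the base ring is exactly what makes the reverse inclusion work over an algebraically closed field.

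Combining these two steps yields an isomorphism $\KO(X(\C)) \xrightarrow{\sim} \C[X^+]$ realized inside $\C[X']$ by $\psi$. To recover the stated map $\varphi : f\mapsto f\circ\piC^+$, I would use the factorization $\piC' = \piC^+\circ\sigma_\C$, so that $\psi(f) = (f\circ\piC^+)\circ\sigma_\C = \sigma^*(\varphi(f))$, where $\sigma^* : \KO(X^+(\C)) \inj \C[X']$, $h\mapsto h\circ\sigma_\C$, is injective (Lemma~\ref{lemPhiInj}, $\sigma$ surjective) and restricts to the inclusion $\C[X^+]\inj\C[X']$. From $\sigma^*\circ\varphi = \psi$ injective I obtain that $\varphi$ is injective, and from $\sigma^*(\im\varphi) = \im\psi = \C[X^+] = \sigma^*(\C[X^+])$ together with the injectivity of $\sigma^*$ I conclude $\im(\varphi) = \C[X^+]$; in particular $\varphi$ already lands in $\C[X^+]$ and is onto, so it is the desired isomorphism.

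The main obstacle is the algebraic identity of the second paragraph; everything else is bookkeeping with the propositions already established. One could shorten the first reduction by invoking Proposition~\ref{PropSubEquiMemeFctRatioCont} (as $\pi^+$ is subintegral, $\varphi:\KO(X(\C))\to\KO(X^+(\C))$ is already an isomorphism), reducing the problem to $\KO(X^+(\C)) = \C[X^+]$; but this variant still requires the same fiber computation applied to $\sigma$, so it offers no genuine saving.
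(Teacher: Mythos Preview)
Your proof is correct and follows essentially the same route as the paper: both identify $\C[X^+]$ with the functions in $\C[X']$ constant on the fibers of $\piC'$ via Corollary~\ref{CorSemiMaxEstSemi} and the Jacobson-radical description (same two inclusions, same use of constants in $\C$), and both feed this into Proposition~\ref{PropConstanteSurLesFibres} combined with $\KO(X'(\C))=\C[X']$. The only cosmetic difference is that you carefully separate the pullback $\psi$ along $\piC'$ from the stated map $\varphi$ along $\piC^+$ and recover the latter via $\sigma^*$, whereas the paper leaves this identification implicit.
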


\begin{proof}
We have shown in Corollary \ref{CorSemiMaxEstSemi} that
$$\C[X^+] = \C[X^{+_{max}}] = \{ f\in \C[X']\text{ / }\forall x \in X(\C) \text{   }f_x\in \Ocal_{X,x} + \Rrm(\Ocal_{X',x}) \}$$
Let $\pi' : X' \to X$ be the normalization morphism of $X$. We want to show $$\C[X^{+_{max}}] = \{ f\in \C[X']\text{ / } f \text{ constant on the fibers of }\piC' \}$$
Let $x\in X(\C)$ and $f\in \C[X^{+_{max}}]$. We write $\piC'^{-1}(\{x\}) = \{ x_1',...,x_n' \}$. The goal is to show $\text{ }f(x_i')=f(x_j')$ for all $i,j \in \llbracket1,n\rrbracket$. First, the ideals of $\Ocal_{X',x}$ above $\m_x$ are of the from $\m_{x_i'} \Ocal_{X',x}$ :

$$\begin{array}{ccc}
\Ocal_{X,x} &\inj & \Ocal_{X',x} \smallskip \\
\m_x\Ocal_{X,x} &\mapsfrom& \m_{x_1'} \Ocal_{X',x}\\
&&\hspace{0.5cm}\vdots\\
&&\m_{x_n'} \Ocal_{X',x}
\end{array}$$

By definition, we have $f_x \in \Ocal_{X,x} + \Rrm(\Ocal_{X',x})$. So we can write $f_x = \alpha +\beta$ with $\alpha \in \Ocal_{X,x} \subset \Ocal_{X',x}$ and $\beta \in \m_{x_1'}\Ocal_{X',x} \cap ... \cap \m_{x_n'}\Ocal_{X',x}$. Thus, for all $i \in \{1,...,n\}$
$$f_x(x_i') = \alpha(\pi'(x_i')) + \beta(x_i')$$
But $\beta(x_i') = 0$ because $\beta\in \m_{x_i'}\Ocal_{X',x}$ and $\alpha(\pi'(x_i')) = \alpha(x)$. So $\alpha(x) = f(x_1') = ... = f(x_n')$ and we obtain 

$$\C[X^{+_{max}}] \subset \{ f\in \C[X']\text{ / } f \text{ constant on the fibers of }\piC' \}.$$

Conversely, let $f \in \C[X']$ be constant on the fibers of $\piC'$. Let $x\in X(\C)$, then $\forall y \in \piC'^{-1}(x)$, $f(y) = \alpha \in \C$. We then have $$f_x - \alpha \in \displaystyle{\bigcap_{y\in \piC'^{-1}(x)}\m_y\Ocal_{X',x}} = \Rrm(\Ocal_{X',x})$$ and so $f \in \C[X^{+_{max}}]$. We have proved $$\C[X^{+_{max}}] = \{ f\in \C[X']\text{ / } f \text{ constant on the fibers of }\piC' \}.$$

But since $\KO(X'(\C)) = \C[X']$ by Proposition \ref{PropRegSurNormalSontPoly} and since
$$\varphi : \KO(X(\C)) \xrightarrow{\sim} \{ f\in \KO(X'(\C))\text{ / } f \text{ constant on the fibers of }\piC' \}$$
by Proposition \ref{PropConstanteSurLesFibres}, we get
$$\varphi : \KO(X(\C)) \xrightarrow{\sim} \C[X^{+_{max}}] = \C[X^+].$$

\end{proof}

We have managed to identify the ring of continuous rational functions of an affine complex variety : it corresponds to the coordinate ring of its seminormalization. We can now complete the previous diagram.

For every morphism $\pi : Y\to X$ of affine varieties such that $\C[X]\inj \C[Y]$ is subintegral, we get
$$\xymatrix{
    \KO(X(\C)) \ar[r]^{\simeq}& \KO(Y(\C)) \ar[r]^{\simeq} & \KO(X^+(\C)) \ar@{^{(}->}[r] & \KO(X'(\C))\\
    \C[X] \ar@{^{(}->}[u] \ar@{^{(}->}[r]^{subint.} & \C[Y] \ar@{^{(}->}[u] \ar@{^{(}->}[r]^{subint.} & \C[X^+] \ar@{=}[u] \ar@{^{(}->}[r] & \C[X'] \ar@{=}[u]
}$$

\subsection{Continuous rational functions and regulous functions.}\label{SubsectionContinuousRationalvsRegulous}

As said before, continuous rational functions are of particular interest in real algebraic geometry. They are very related to an other kind of functions : the \textit{regulous} functions. Let $X$ be a real algebraic set and let $f : X\to \R$ be continuous. We say that $f$ is regulous on $X$ if for every algebraic subset $Z\subset X$, the restriction $f_{|_Z}$ has a rational representation. This is why they are sometimes called "hereditarily rational functions". Those two types of functions are not the same in the case of real singular algebraic sets. One can consider the following example (from \cite{Kollar}) of a continuous rational function which is not regulous.

\begin{ex}
Let $X := \{x^3 - (1+z^2)y^3\} \subset \R^3$ be an real algebraic set. Consider $f : X \to \R$ such that $f(x,y,z) = (1+z^2)^{\frac{1}{3}}$. See that, although $f$ is continuous on $X$ and $f = x/y$ if $y\neq 0$, the function $f$ is not rational on $\{y=0\}$.
\end{ex}

For more details, one can see \cite{Kuch} section 3 for a review on these notions. We show, in the following proposition, that these two kinds of functions are the same in complex algebraic geometry.

\begin{prop}\label{PropRatContSontRegulues}
Let $f \in \KO(X(\C))$. Then for every subvariety $V \subset X$, we have $$f_{|_{V(\C)}} \in \KO(V(\C))$$
\end{prop}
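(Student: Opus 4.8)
The restriction $f_{|V(\C)}$ is automatically continuous for the Euclidean topology, so the entire content of the statement is that it is \emph{rational} on $V$. The plan is to use Theorem \ref{TheoRatioContEgalPolySN} to replace $f$ by a genuine polynomial function upstairs on the seminormalization, where restriction to a subvariety is harmless, and then to transport the restricted function back down. First I would lift $f$: writing $\pi^+ : X^+ \to X$ for the seminormalization morphism, the isomorphism $\varphi$ of Theorem \ref{TheoRatioContEgalPolySN} produces $\tilde f := f \circ \piC^+ \in \C[X^+]$. Since $\piC^+$ is bijective (Theorem \ref{TheoSubSsiBij}), one has $f = \tilde f \circ (\piC^+)^{-1}$ as functions on $X(\C)$.

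Next I would pull the subvariety back. Let $V^+ := (\pi^+)^{-1}(V)$, endowed with its reduced structure; it is a closed, hence affine, subvariety of $X^+$, and since $\pi^+$ is finite the restriction $\rho := \pi^+_{|V^+} : V^+ \to V$ is again a finite morphism of affine varieties (finiteness of the module structure passes to the quotients $\C[V]\inj\C[V^+]$). On complex points $V^+(\C) = (\piC^+)^{-1}(V(\C))$, so $\rho_{\C}$ is the restriction of the bijection $\piC^+$ to the full preimage of $V(\C)$ and is therefore itself bijective. By Theorem \ref{TheoSubSsiBij} the extension $\C[V] \inj \C[V^+]$ is then subintegral, and Proposition \ref{PropSubEquiMemeFctRatioCont} yields an isomorphism $\varphi_V : \KO(V(\C)) \xrightarrow{\sim} \KO(V^+(\C))$, $g \mapsto g \circ \rho_{\C}$.

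Finally I would descend. The restriction $\tilde f_{|V^+(\C)}$ lies in $\C[V^+] \subset \KO(V^+(\C))$, so it admits a preimage $g \in \KO(V(\C))$ under $\varphi_V$, i.e. $g \circ \rho_{\C} = \tilde f_{|V^+(\C)}$. Evaluating at $w \in V^+(\C)$ and setting $v = \rho_{\C}(w)$ gives $g(v) = \tilde f(w) = f(\piC^+(w)) = f(v)$, so $g = f_{|V(\C)}$, whence $f_{|V(\C)} = g \in \KO(V(\C))$.

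The conceptual heart, and the only place requiring genuine care, is the middle step: one must check that restricting the seminormalization morphism to the preimage of $V$ yields a finite morphism that is still bijective on $\C$-points, so that $\C[V]\inj\C[V^+]$ is subintegral \emph{even though $V^+$ need not be the seminormalization of $V$}. This is exactly the feature that fails in the real setting and makes complex continuous rational functions hereditarily rational. Granting this, the remaining verifications — that $\rho$ is finite and that $V^+(\C)$ is precisely the preimage of $V(\C)$ (using that $\C$ is algebraically closed and $\pi^+$ finite) — are routine.
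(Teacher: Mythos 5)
Your proof is correct and follows essentially the same route as the paper's: lift $f$ to $\tilde f = f\circ\piC^+ \in \C[X^+]$ via Theorem \ref{TheoRatioContEgalPolySN}, observe that the restriction of $\pi^+$ over $V$ is a finite morphism that is bijective on closed points (hence $\C[V]\inj\C[V^+]$ is subintegral by Theorem \ref{TheoSubSsiBij}), and descend $\tilde f_{|V^+(\C)}$ through the isomorphism of Proposition \ref{PropSubEquiMemeFctRatioCont}. The only cosmetic difference is that the paper first reduces to $V$ irreducible and works with $W=\Vcal(\q)$ for $\q$ the unique prime over $\p$, which coincides with your reduced preimage $V^+$ in that case.
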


\begin{proof}
As usual, Lemma \ref{Lemirreducible} allows us to suppose $V$ irreducible. Let's considerate $\pi^+ : X^+ \to X$ the seminormalization morphism of $X$ and $V = \Vcal(\p)$ an irreducible subvariety of $X$. Like in Proposition \ref{PropSubEqBij}, we have the following commutative diagram :

$$\xymatrix{
    \C[X] \ar@{->>}[d] \ar@{^{(}->}[r]^{(\pi^+)^*}& \C[X^+] \ar@{->>}[d] \\
    \C[V] \ar@{^{(}->}[d] \ar@{^{(}->}[r]^{(\pi_{|W}^+)^*} &\C[W] \ar@{^{(}->}[d] \\
     \kappa(V) \ar@{=}[r] & \kappa(W),
}$$

with $W = \Vcal(\q)$ where $\q$ is the unique prime ideal of $\C[X^+]$ above $\p$. By the going-up property and the description of prime ideals for quotient rings, one can see that $W \to V$ is a bijection. Thus Theorem \ref{TheoSubSsiBij} tells us that $\C[V]\inj\C[W]$ is subintegral. Since $\C[W]$ is a finite $\C[V]$-module, we can apply Proposition \ref{PropSubEquiMemeFctRatioCont} and get that $\KO(V) \inj \KO(W)$ is an isomorphism. Thus
$$ f\in \KO(X(\C)) \implies f\circ\piC^+ \in \Ocal_{X^+}(X^+(\C)) \implies f\circ\pi^+_{|W(\C)} \in \Ocal_{X^+}(W(\C))$$
$$\implies f_{|V(\C)}\circ \pi_{|W(\C)}^+ = f\circ \pi^+_{|W(\C)}\in \KO(W(\C)) \implies f_{|V(\C)}\in \KO(V(\C))$$
\end{proof}

\begin{rmq}
In real algebraic geometry, regulous functions can also be defined in the following equivalent way. Let $f : X \to \R$ be a continuous function on a real algebraic set. We say that $f$ is \textit{regulous} ( or sometimes \textit{stratified-regular} ) if there exists a finite stratification $\mathcal{S}$ of $X$, with Zariski locally closed strata (i.e. the intersection of a closed and an open set), such that for all $S\in \mathcal{S}$ the restriction $f_{|_{S}}$ is regular. It also applies in our case, if $f : X(\C) \to \C$ is a continuous rational function, then we can write

$$ f = \left\{ \begin{array}{lll}
     & p_1/q_1 & \text{ if } q_1\neq 0  \\
     & p_2/q_2 & \text{ if } q_1 = 0\text{ and }q_2\neq 0  \\
     & p_3/q_3 & \text{ if } q_1=q_2=0\text{ and }q_3\neq 0  \\
     & ... &
\end{array} \right.$$ 
and for every $n > 1$, we have $\Zcal(q_n) \subset \sing(\Zcal(q_{n-1}))$.
\end{rmq}

\subsection{The ring of continuous rational functions seen as an integral closure and algebraic Whitney theorem.}\label{TheRingOfRat}

In Whitney's book \cite{Whitney}, one can find a chapter dedicated to a certain type of functions : the "c-holomorphic" functions. The c-holomorphic functions are defined as continuous functions on an analytic variety which are holomorphic on the smooth points of the variety. Note that, by Proposition \ref{PropRatEstRegSurRegX}, continuous rational functions are c-holomorphic. A characterization of c-holomorphic functions, given by Whitney, is that a continuous function is c-holomorphic if and only if it has an analytic graph. This theorem naturally leads to wonder if we can have the same characterization for continuous rational functions. In other words, do we have, on an affine algebraic variety, that a continuous function is rational if and only if its graph is algebraically closed ? The answer is yes and a proof with arguments from analytic geometry can be found in \cite{BDTW}. The goal of this section is to prove a slightly stronger version with arguments from algebraic geometry.\\
More precisely we aim to show that, if $X$ is an affine variety, then every continuous function from $X(\C)$ to $\C$, for which there exists $P(t)\in \C[X][t]$ such that $P(f)=0$, is rational.\\
It allows us to deduce the algebraic version of Whitney's theorem discussed above but also to identify the ring in which $\KO(X(\C))$ is the integral closure of $\C[X]$. 

We start by proving the theorem in the case where $X$ is irreducible and where the polynomial, for which the continuous function is a root, is irreducible in $\K(X)[t]$. With those hypothesis we can give a proof similar to the one given in \cite{Shafa2} (theorem 8.4 p.176). It is very important for the polynomial to be irreducible in $\K(X)[t]$ otherwise the new variety created from it won't necessarily be irreducible, whereas the key argument uses the irreducibility of this new variety.  

\begin{notation}
Let $P$ be a polynomial, we note $\disc(P)$ its discriminant.
\end{notation}

\begin{lemma}\label{LemEntContDonneRatIrréductible}
Let $X$ be an irreducible affine variety and $f: X(\C)\to \C$ be a continuous function. Suppose there exists an irreducible polynomial $P\in \K(X)[t]$ such that $$\exists U \text{ a Z-open set \hspace{0.5cm}}\forall x\in U(\C) \text{ \hspace{0.5cm}}P(x,f(x))=0$$
then
$$f \in \KO(X(\C))$$
\end{lemma}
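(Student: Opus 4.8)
The plan is to show that the irreducibility of $P$ forces its degree in $t$ to be $1$, so that $f$ agrees with a rational function on a dense open set; the assumed continuity then yields $f\in\KO(X(\C))$. After dividing by its leading coefficient I may assume $P$ is monic, say $P(t)=t^n+a_{n-1}t^{n-1}+\dots+a_0$ with $a_i\in\K(X)$. Since $X$ is irreducible and we are in characteristic zero, $P$ is separable over $\K(X)$, so $\disc(P)\neq 0$; inverting a single $g\in\C[X]$ that kills all the denominators of the $a_i$ and of $\disc(P)$ (and shrinking inside $U$) produces a nonempty principal Z-open set $U'=\{g\neq 0\}$, which is affine with coordinate ring $\C[X]_g$, on which the coefficients are regular, $P(x,f(x))=0$ holds, and $P(x,\cdot)$ has $n$ distinct simple roots.

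Next I would form the affine variety $Z:=\spec(\C[X]_g[t]/(P))$. Because $P$ is monic over $\C[X]_g$ and irreducible in $\K(X)[t]$, the ring $\C[X]_g[t]/(P)$ embeds into the field $\K(X)[t]/(P)$ and is therefore a domain, so $Z$ is irreducible; consequently $Z(\C)$ is connected for the Euclidean topology. The projection $p:Z(\C)\to U'(\C)$, $(x,t)\mapsto x$, is finite, and over $U'$ the nonvanishing of $\disc(P)$ together with the holomorphic implicit function theorem make $p$ a local homeomorphism with $n$ sheets, i.e. an $n$-sheeted topological covering. I also record that $U'(\C)$ is connected, being a dense Z-open subset of the irreducible variety $X$.

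The heart of the argument is that $f$ provides a continuous section of this covering: since $P(x,f(x))=0$ for $x\in U'(\C)$, the map $s:U'(\C)\to Z(\C)$, $x\mapsto(x,f(x))$, is well defined and continuous, and satisfies $p\circ s=\mathrm{id}$. For a finite covering over a connected base, the image of a continuous section is open and closed in the total space, since locally it is exactly one of the sheets; as $Z(\C)$ is connected this forces $s(U'(\C))=Z(\C)$, whence $p$ is a homeomorphism and $n=1$. This is the step I expect to be the main obstacle, since it is where the purely topological input — connectedness of $Z(\C)$, which encodes the transitivity of monodromy coming from the irreducibility of $P$ over $\K(X)$ — must be matched against the covering structure.

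Finally, $n=1$ means $P(t)=t-a$ with $a\in\K(X)$, so $f=a$ on $U'(\C)$ and $f$ is a rational function on $X(\C)$. Since $f$ is continuous by hypothesis, I conclude $f\in\KO(X(\C))$.
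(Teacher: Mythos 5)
Your proposal is correct and follows essentially the same route as the paper: restrict to a principal affine open set where $P$ is monic and $\disc(P)$ is invertible, form the irreducible (hence Euclidean-connected) variety cut out by $P$, and use the continuous section $x\mapsto(x,f(x))$ of the resulting $n$-sheeted cover to show its image is open and closed, forcing $n=1$. The only cosmetic difference is that you phrase the key step in covering-space language with the implicit function theorem, where the paper argues directly with the connected components $V_x^i$ of the fibers of small neighbourhoods; the content is identical.
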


\begin{proof}
First we consider the affine Z-open set $X_1$ such that $P$ is a monic polynomial of $\C[X_1][t]$. Then we write $Y_1 = \spec(\C[X_1][t]/<P>)$, which is irreducible because $P$ is irreducible in $\K(X)[t] = \K(X_1)[t]$, and $\pi : Y_1 \to X_1$ the induced finite morphism. We note $X_2$ the affine Z-open set where $\disc(P)$ does not vanish. Finally we write $Y_2 = \pi^{-1}(X_2)$. Now $X_2$ and $Y_2$ are two irreducible affine varieties with $\pi : Y_2 \to X_2$ finite and $$\forall x\in X_2(\C) \hspace{0.3cm} \#\piC^{-1}(x) = [\K(Y_2):\K(X_2)] = \deg(P).$$ We write $m:= \deg(P)$ and we prove by contradiction that $m=1$. Let's suppose $m>1$.

Let $x\in X_2(\C)$, we can consider $U_x$ a Euclidean open set such that $X_2(\C)\cap U_x$ is connected and $$\pi^{-1}(X_2(\C)\cap U_x) = \bigsqcup_{i=1}^m V_x^i\text{ where }V_x^i \subset Y_2(\C) \text{ are two by two disjoint connected open sets }$$
We note
$$\begin{array}{cccc}
     \varphi : & X_2(\C) & \to & Y_2(\C) \\
     &  x & \mapsto & (x;f(x)) 
\end{array} $$

which is, by hypothesis on $f$, a continuous section of $\pi$. Thus $\varphi(X_2(\C)\cap U_x)$ is connected and so it corresponds to one of the $V_x^i$ which we denote $V_x^{i_0}$. We then have that $\varphi(X_2(\C))$ and $Y_2(\mathbb{C})\backslash \varphi(X_2(\C))$ are open sets because $$\varphi(X_2(\C)) = \bigcup_{x\in X_2(\C)} V_x^{i_0}$$
and
$$ Y_2(\mathbb{C})\backslash \varphi(X_2(\C)) = \bigcup_{x\in X_2(\C)} \bigsqcup_{i\neq i_0} V_x^i $$
Moreover, since $m>1$, we clearly have $\varphi(X_2(\C)) \neq Y_2(\C)$. But since $Y_2$ is irreducible, the set $Y_2(\C)$ must be connected, a contradiction. So $m$ must be equal to 1 and then $f$ is rational.\\
\end{proof}

In order to prove the desired theorem with Lemma \ref{LemEntContDonneRatIrréductible}, we need to find an irreducible polynomial of $\K(X)[t]$ for which the continuous function is a root. This is what Lemma \ref{LemTrouverUnPolIrr} gives us.
\begin{lemma}\label{LemTrouverUnPolIrr}
Let $X$ be an affine irreducible smooth variety and $f:X(\C)\to \C$ be continuous with respect to the Euclidean topology. Suppose there exists a monic polynomial $P\in\C[X][t]$ such that $$\forall x\in X(\C)\text{ \hspace{0.5cm}} P(x,f(x))=0$$
Then $f$ is a root of an irreducible polynomial of $\K(X)[t]$.
\end{lemma}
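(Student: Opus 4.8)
The plan is to factor $P$ over $\K(X)[t]$ into distinct monic irreducible factors and then argue that, after passing to a suitable Zariski-dense open set, the continuous function $f$ must satisfy exactly one of these factors. Since $X$ is irreducible, $\C[X]$ is a domain and $\K(X)$ is its fraction field, so $\K(X)[t]$ is a UFD. Write $P = P_1^{a_1}\cdots P_r^{a_r}$ with the $P_i\in\K(X)[t]$ monic, irreducible and pairwise distinct; because $P$ is monic this factorization is compatible with its leading coefficient $1$.

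First I would choose a non-empty Zariski-open $U\subseteq X$ on which all the (finitely many) coefficients of all the $P_i$ are regular, and on which the discriminant $\disc\bigl(\prod_{i=1}^r P_i\bigr)$ does not vanish. Such a $U$ exists and is Zariski-dense: the coefficients are rational functions, so they are regular off a proper closed set, and since we work in characteristic zero the distinct irreducibles $P_i$ are separable and pairwise coprime, so $\prod_i P_i$ is squarefree and $\disc\bigl(\prod_i P_i\bigr)$ is a non-zero element of $\K(X)$. On $U$ the identity $P(x,t)=\prod_i P_i(x,t)^{a_i}$ holds coefficientwise, and for every $x\in U(\C)$ the polynomial $\prod_i P_i(x,t)$ has simple roots; in particular each of its roots is a root of exactly one factor $P_i(x,t)$.

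Next, for each $i$ set $A_i := \{x\in U(\C)\mid P_i(x,f(x))=0\}$. Because the coefficients of $P_i$ are regular, hence continuous, on $U$ and $f$ is continuous, the map $x\mapsto P_i(x,f(x))$ is continuous on $U(\C)$, so each $A_i$ is Euclidean-closed in $U(\C)$. The hypothesis $P(x,f(x))=0$ gives $\prod_i P_i(x,f(x))^{a_i}=0$ for $x\in U(\C)$, so $f(x)$ is a root of $\prod_i P_i(x,t)$ and therefore lies in at least one $A_i$; by the simple-roots property it lies in exactly one. Hence the $A_i$ are pairwise disjoint, closed, and cover $U(\C)$, so each $A_i$ is also open, being the complement of the union of the remaining (finitely many) closed sets.

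Finally, $U$ is a non-empty Zariski-open subset of the irreducible variety $X$, hence irreducible, so $U(\C)$ is connected for the Euclidean topology; the smoothness hypothesis makes this transparent, since $U(\C)$ is then a connected complex manifold. A connected space partitioned into finitely many clopen subsets has all but one of them empty, so some $A_{i_0}$ equals $U(\C)$, i.e. $P_{i_0}(x,f(x))=0$ for all $x\in U(\C)$. As $U(\C)$ is Zariski-dense and $P_{i_0}$ is irreducible in $\K(X)[t]$, this exhibits $f$ as a root of an irreducible polynomial of $\K(X)[t]$, as required. I expect the main obstacle to be arranging the factors to have simple roots on $U(\C)$: this is precisely what turns the closed cover $\{A_i\}$ into a clopen partition and lets connectedness single out one factor, and it is the reason for introducing the discriminant condition rather than working with the raw factorization.
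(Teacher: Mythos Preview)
Your proof is correct and takes a genuinely different route from the paper's. The paper invokes an analytic lemma of Whitney to conclude from smoothness and the monic integral relation that $f$ is holomorphic on $X(\C)$, hence lies in the field $\mathcal{M}(X(\C))$ of meromorphic functions; then the evaluation map $\K(X)[t]\to\mathcal{M}(X(\C))$, $Q\mapsto Q(f)$, has nonzero kernel in the PID $\K(X)[t]$, and since the target is a domain any generator of this kernel is irreducible. You instead factor $P$ over $\K(X)[t]$ and use a clopen-partition-plus-connectedness argument on a suitable $U(\C)$ to single out one irreducible factor annihilating $f$---essentially the same topological mechanism the paper itself deploys in the proof of Lemma~\ref{LemEntContDonneRatIrréductible}. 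Your argument is more self-contained, avoiding the appeal to Whitney's analytic result, and in fact uses smoothness only to make the connectedness of $U(\C)$ transparent (a fact that holds for any irreducible complex variety). The paper's route is shorter once Whitney's lemma is granted and yields as a byproduct the stronger intermediate statement that $f$ is holomorphic.
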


\begin{proof}
Since $X$ is supposed to be smooth and $P$ to be a monic polynomial, we can apply \cite{Whitney} Lemma 2J Chapter 4 to get that $f$ is holomorphic on $X(\C)$. In particular, we get $f\in \mathcal{M}(X(\C))$ which is a field because $X$ is irreducible (see \cite{Shafa2} Theorem 7.1). Now consider the morphism
$$ \begin{array}{cccc}
     ev_f :& \K(X)[t] & \to & \mathcal{M}(X(\C))  \\
     & Q(t) & \mapsto & Q(f)
\end{array}$$
We have that $\K(X)[t]$ is a principal ideal ring because $\K(X)$ is a field. Since $P(f)=0$ in $\mathcal{M}(X(\C))$, then $\ker(ev_f)\neq 0$. So there exists $F\neq 0$ such that $\ker(ev_f) = <F>$. Since $\mathcal{M}(X(\C))$ is a domain, the polynomial $F$ is irreducible in $\K(X)[t]$.
\end{proof}
\newpage
We now have all the arguments we need to demonstrate the main theorem of this section.
\begin{theorem}\label{TheoEntContDonneRat}
Let $X$ be an affine variety and $f: X(\C)\to \C$ be a continuous function for the Euclidean topology. Suppose there exists a polynomial $P\in \C[X][t]$ which is non-zero on each irreducible component of $X(\C)$ and such that $$\forall x\in X(\C) \text{ \hspace{0.5cm}}P(x,f(x))=0$$
then
$$f \in \KO(X(\C))$$
\end{theorem}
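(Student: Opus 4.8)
The plan is to reduce the statement to the two preceding lemmas, whose content does the real work, by manoeuvring into the monic, smooth, irreducible-polynomial situation that they require. First I would invoke Lemma \ref{Lemirreducible} to reduce to the case where $X$ is irreducible: a function is in $\KO(X(\C))$ iff its restriction to each component is continuous rational, and the hypothesis that $P$ is non-zero on each irreducible component guarantees that the restriction of $P$ to each $X_i$ remains a non-zero element of $\C[X_i][t]$, so the hypotheses survive the reduction. Thus I may assume $X$ irreducible, so that $\C[X]$ is a domain and $\K(X)$ is a field, and I let $a\in\C[X]$ denote the (non-zero) leading coefficient of $P$.

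Next I would arrange to be in the monic, smooth setting demanded by Lemma \ref{LemTrouverUnPolIrr}. On the principal open set $X_a=\{a\neq 0\}$ the element $a$ is a unit, so $\tilde P := P/a$ is monic in $\C[X_a][t]$ and still satisfies $\tilde P(x,f(x))=0$ for every $x\in X_a(\C)$. Since $X$ is irreducible its smooth locus $X_{reg}$ is a non-empty Z-open set, so I can choose a non-empty principal open $U\subseteq X_{reg}\cap X_a$. Being a principal open of an affine variety, $U$ is affine; it is irreducible and smooth, and $a$ remains a unit on it, so $\tilde P\in\C[U][t]$ is monic with $\tilde P(x,f(x))=0$ on all of $U(\C)$, while $f|_{U(\C)}$ is continuous. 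Applying Lemma \ref{LemTrouverUnPolIrr} on $U$ then produces an irreducible polynomial $Q\in\K(U)[t]=\K(X)[t]$ with $Q(f)=0$ (the function fields agree because $U$ is Z-dense in the irreducible $X$); in particular $Q(x,f(x))=0$ holds on a Z-open subset of $X$.

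Finally I would feed $Q$ and this Z-open set into Lemma \ref{LemEntContDonneRatIrréductible}, applied on the whole irreducible variety $X$ (on which $f$ is continuous by hypothesis), to conclude directly that $f\in\KO(X(\C))$, which is the assertion. The genuinely hard steps are already packaged inside the lemmas being cited — the holomorphy of $f$ at smooth points (via Whitney) in Lemma \ref{LemTrouverUnPolIrr}, and the connectedness/monodromy argument forcing the field extension to have degree one in Lemma \ref{LemEntContDonneRatIrréductible}. The main obstacle, and the only delicate point of this argument, is the bookkeeping: making $P$ monic by passing to $X_a$ and dividing by the leading coefficient, and verifying that shrinking to a smooth affine principal open neither alters the function field nor destroys the polynomial relation, so that the two lemmas can be chained coherently across compatible open sets.
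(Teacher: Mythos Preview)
Your proposal is correct and follows essentially the same route as the paper: reduce to irreducible components via Lemma~\ref{Lemirreducible}, pass to a principal affine open where the leading coefficient is invertible and the variety is smooth, apply Lemma~\ref{LemTrouverUnPolIrr} there to extract an irreducible polynomial in $\K(X)[t]$, and feed that into Lemma~\ref{LemEntContDonneRatIrréductible} on all of $X$. The paper compresses the smooth-and-monic reduction into the single open set $\Dcal(q_1 a_n)$ (with $q_1$ a generator of the singular-locus ideal), but your more explicit bookkeeping achieves the same thing.
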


\begin{proof}
Let $X = \bigcup_{i=1}^n X_i$ be its decomposition into irreducible components. Let $i\in \llbracket 1,n \rrbracket$ then $f_{|X_i(\C)}$ is a root of the polynomial $P$ with its coefficients restricted to $X_i(\C)$. Thus, by Lemma \ref{Lemirreducible}, it is enough to prove the theorem for an irreducible affine variety.\\
If $X_{sing}(\C) = \Zcal(<q_1,...,q_s>)$ and $a_n$ is the leading coefficient of $P$, we can replace $X$ by $\Dcal(q_1a_n)$ and then suppose that $X$ is smooth and that $P$ is a monic polynomial. It allows us to use Lemma \ref{LemTrouverUnPolIrr} and to get an irreducible polynomial $F \in \K(X)[t]$ such that there exists a Z-open set $U$ where for all $x\in U(\C)$, $F(x,f(x)) = 0$. The conclusion is now given by Lemma \ref{LemEntContDonneRatIrréductible}.
\end{proof}

Thanks to Theorem \ref{TheoEntContDonneRat} we can now see the ring of continuous rational functions as an integral closure of $\C[X]$.
\begin{cor}\label{CoroClotureDansFoncCont}
Let $X$ be an affine variety. Then
$$\KO(X(\C))  = \C[X]'_{\Ccal^0(X(\C),\C)}$$
\end{cor}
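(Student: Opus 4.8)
The plan is to prove the asserted equality by a double inclusion, observing that both directions follow almost immediately from results already established, so that the corollary is really a packaging of Proposition \ref{PropRatiContEstEntiere} and Theorem \ref{TheoEntContDonneRat}.

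First I would treat the inclusion $\KO(X(\C)) \subseteq \C[X]'_{\Ccal^0(X(\C),\C)}$. Take $f \in \KO(X(\C))$. By definition $f$ is continuous for the Euclidean topology, hence $f \in \Ccal^0(X(\C),\C)$. On the other hand, Proposition \ref{PropRatiContEstEntiere} asserts precisely that $f$ is integral over $\C[X]$, i.e. $f$ satisfies a monic polynomial relation with coefficients in $\C[X]$. A continuous function that is integral over $\C[X]$ belongs by definition to the integral closure of $\C[X]$ inside $\Ccal^0(X(\C),\C)$, which gives the inclusion.

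For the reverse inclusion $\C[X]'_{\Ccal^0(X(\C),\C)} \subseteq \KO(X(\C))$, I would take $f \in \C[X]'_{\Ccal^0(X(\C),\C)}$. By definition $f$ is continuous and there exists a monic polynomial $P \in \C[X][t]$ with $P(x,f(x)) = 0$ for every $x \in X(\C)$. The key observation is that since $P$ is monic its leading coefficient is the constant function $1$, which vanishes on no irreducible component of $X(\C)$; in particular $P$ is non-zero on each irreducible component. The hypotheses of Theorem \ref{TheoEntContDonneRat} are therefore met, and that theorem yields directly $f \in \KO(X(\C))$.

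I do not expect a genuine obstacle here: the substantial content, namely that a continuous root of a monic (or componentwise non-zero) polynomial over $\C[X]$ must in fact be rational, was already carried by Theorem \ref{TheoEntContDonneRat}, and the converse direction is just continuity together with Proposition \ref{PropRatiContEstEntiere}. The only point requiring care is the remark that monicity forces the leading coefficient to be non-vanishing on every component, which is exactly what lets us invoke Theorem \ref{TheoEntContDonneRat}.
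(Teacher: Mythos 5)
Your proof is correct and is essentially the paper's own argument: the paper proves this corollary by simply citing Proposition \ref{PropRatiContEstEntiere} for one inclusion and Theorem \ref{TheoEntContDonneRat} for the other, and you have merely spelled out those two steps, including the (correct) observation that monicity guarantees the nonvanishing hypothesis of Theorem \ref{TheoEntContDonneRat} on every irreducible component.
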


\begin{proof}
The result follows from Proposition \ref{PropRatiContEstEntiere} and Theorem $\ref{TheoEntContDonneRat}$.
\end{proof}

\begin{rmq}
By using Corollary \ref{CoroClotureDansFoncCont}, one can give a very short proof of Proposition \ref{PropRatContSontRegulues}. Indeed, if $V$ is a subvariety of $X$ and if $f \in \KO(X(\C))$, then there exists a monic polynomial in $\C[X][t]$ for which $f$ is a root. So $f_{|_{V(\C)}}$ is a root of the same polynomial with its coefficients restricted to $V(\C)$. Since $f_{|_{V(\C)}}$ is continuous, we get $f_{|_{V(\C)}} \in \C[V]'_{\Ccal^0(V(\C),\C)} = \KO(V(\C))$.
\end{rmq}

Let's conclude this section by proving the algebraic version of Whitney's Theorem 4.5Q in \cite{Whitney} introduced at the beginning of this section.

\begin{cor}\label{PropWhitney}
Let $X$ be an affine variety and $f:X(\C) \to \C$ be a continuous function. We note ${\Gamma_f :=\{ (x,f(x)) \mid x\in X(\C) \} \subset X(\C)\times \A^1(\C)}$ the graph of $f$. Then the following properties are equivalent :
\begin{enumerate}
    \item[1)] The graph $\Gamma_f$ is Z-closed.
    \item[2)] $f\in \KO(X(\C))$
\end{enumerate}
\end{cor}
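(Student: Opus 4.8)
The plan is to reduce immediately to the irreducible case and then treat the two implications separately: the forward one by exhibiting an explicit finite morphism whose complex image is exactly the graph, and the backward one by producing a nonzero vanishing polynomial and invoking Theorem \ref{TheoEntContDonneRat}.

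First I would reduce to $X$ irreducible. Writing $X=\bigcup_{i=1}^n X_i$ for the decomposition into irreducible components, one has $\Gamma_{f_{|X_i(\C)}}=\Gamma_f\cap(X_i(\C)\times\A^1(\C))$, so $\Gamma_f$ is Z-closed in $X(\C)\times\A^1(\C)$ if and only if each $\Gamma_{f_{|X_i(\C)}}$ is Z-closed in $X_i(\C)\times\A^1(\C)$ (using that each $X_i(\C)\times\A^1(\C)$ is itself Z-closed). Combined with Lemma \ref{Lemirreducible}, which says $f\in\KO(X(\C))$ iff $f_{|X_i(\C)}\in\KO(X_i(\C))$ for every $i$, this lets me assume $X$ irreducible.

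For $2)\Rightarrow 1)$, let $\pi':X'\to X$ be the normalization. By Proposition \ref{PropRegSurNormalSontPoly} we have $g:=f\circ\piC'\in\C[X']$, so $\Phi:=(\pi',g):X'\to X\times\A^1$ is a morphism. Its comorphism $\Phi^*:\C[X][t]\to\C[X']$ sends $t\mapsto g$ and embeds $\C[X]$ via $\pi'^*$; since $\C[X']$ is a finite $\C[X]$-module it is a finite module over $\im\Phi^*$, so $\Phi$ is a finite morphism onto the closed subvariety $W=\spec(\C[X][t]/\ker\Phi^*)$. By the lying-over property (Lemma \ref{LemGoingUp}), $\Phi$ is surjective and carries $\C$-points onto $\C$-points, whence $\Phi(X'(\C))=W(\C)$ is Z-closed. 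Finally, as $\piC'$ is surjective and $g=f\circ\piC'$, one computes $\Phi(X'(\C))=\{(\piC'(x'),f(\piC'(x')))\mid x'\in X'(\C)\}=\Gamma_f$, so $\Gamma_f$ is Z-closed.

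For $1)\Rightarrow 2)$, suppose $\Gamma_f$ is Z-closed. Since it is the graph of a function defined on all of $X(\C)$, its projection to $X(\C)$ is surjective, so no nonzero element of $\C[X]$ vanishes on $\Gamma_f$; and since over each point $x\in X(\C)$ the fibre of $\Gamma_f$ is the single point $(x,f(x))$, we have $\Gamma_f\neq X(\C)\times\A^1(\C)$. Hence the vanishing ideal $\I(\Gamma_f)\subset\C[X][t]$ is nonzero and meets $\C[X]$ only in $0$; choosing any $0\neq P\in\I(\Gamma_f)$ gives a nonzero $P\in\C[X][t]$ with $P(x,f(x))=0$ for all $x\in X(\C)$. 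As $X$ is irreducible, $P$ is nonzero on $X$, so Theorem \ref{TheoEntContDonneRat} yields $f\in\KO(X(\C))$.

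The main obstacle is the forward implication: the graph of a continuous section is automatically closed for the Euclidean topology, and the entire difficulty is upgrading this to Zariski-closedness. This is precisely what the algebraicity of $f$ provides, through the finiteness of $\Phi$ (equivalently, through the integrality of $f$ over $\C[X]$ granted by $f\in\KO(X(\C))$). The delicate point to verify carefully is that a finite morphism is closed and surjective on complex points, so that the set-theoretic image $\Phi(X'(\C))$ equals the closed subvariety $W(\C)$ rather than merely being contained in it.
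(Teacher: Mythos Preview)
Your proof is correct, but your argument for $2)\Rightarrow 1)$ differs from the paper's. The paper uses the seminormalization $\pi^+:X^+\to X$: by Theorem \ref{TheoRatioContEgalPolySN} one has $f\circ\piC^+\in\C[X^+]$, so $\Gamma_{f\circ\pi^+}$ is Z-closed; then since $\pi^+$ is a Z-homeomorphism (Theorem \ref{TheoSubSsiBij}), the map $\pi^+\times Id$ is Z-closed and bijective, carrying $\Gamma_{f\circ\pi^+}$ onto $\Gamma_f$. You instead go through the normalization $X'$ and use that the morphism $\Phi=(\pi',f\circ\piC'):X'\to X\times\A^1$ is finite onto its scheme-theoretic image, hence surjective on closed points by lying-over, yielding $\Gamma_f=\Phi(X'(\C))=W(\C)$ Z-closed. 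Your route is somewhat more elementary in that it only needs Proposition \ref{PropRegSurNormalSontPoly} rather than the full identification $\KO(X(\C))\simeq\C[X^+]$; the paper's route is cleaner set-theoretically because bijectivity of $\pi^+$ makes the graph transport immediate without having to argue separately that the image fills out all of $W(\C)$. For $1)\Rightarrow 2)$ both you and the paper invoke Theorem \ref{TheoEntContDonneRat}; your explicit justification that $\I(\Gamma_f)$ contains a polynomial nonzero on $X$ is a useful detail the paper leaves implicit.
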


\begin{proof}
The implication 1) implies 2) comes from Theorem \ref{TheoEntContDonneRat}. Conversely, we suppose $f\in \KO(X(\C))$ and we note $\pi : X^+ \to X$ the seminormalization morphism. By Proposition \ref{TheoRatioContEgalPolySN}, we have $f\circ \pi \in \C[X^+]$. Thus $$\Gamma_{f\circ \pi} = \{(y,f\circ\pi(y)) \mid y\in X^+(\C)\} $$ is Z-closed. Moreover the map $\pi\times Id$ is Z-closed because, by Theorem \ref{TheoSubSsiBij}, $\pi$ is a Z-homeomorphism. So $\pi\times Id(\Gamma_{f\circ\pi}) = \{ (\pi(y),f\circ\pi(y)) ; y\in X^+(\C) \}$ is Z-closed. Finally, since $\pi$ is bijective, we get
$$\left\{ (\pi(y),f\circ\pi(y)) \mid y\in X^+(\C) \right\} = \{ (x,f(x)) \mid x\in X(\C) \} = \Gamma_f$$
\end{proof}

\begin{rmq}
In \cite{BDT} and \cite{BDTW}, the authors consider c-holomorphic functions with an algebraic graph. Corollary \ref{PropWhitney} tells us that those functions are the same as the ones considered in this paper when we work on algebraic varieties.
\end{rmq}

\begin{rmq}
In real algebraic geometry, the zero sets of regulous functions are the closed sets of a thinner topology than the Zariski topology called the \textit{regulous topology}. In \cite{FHMM}, the authors show that we can recover some classical theorems of complex algebraic geometry if we work with the regulous topology instead of the Zariski topology. In our case, if $f\in \KO(X(\C))$ then Corollary 4.18 tells us that $\{x\in X(\C) \mid f(x)=0\} = \Gamma_f \cap (X(\C)\times \{0\})$ is a Zariski closed set.
\end{rmq}

\subsection{Examples of continuous rational functions.}\label{ExamplesOfRat}

In general, it is not easy to determine the seminormalization of a variety. We present in this subsection several examples of continuous rational functions and also some explicit seminormalizations of affine varieties. In order to do this we give a pleasant criterion to identify continuous rational functions.\\

\begin{theorem}\label{TheoGraphEntiereDoncCont}
Let $X$ be an affine variety and $f:X(\C)\to \C$. Then $f\in \KO(X(\C))$ if and only if it verifies the following properties :
\begin{center}
\begin{enumerate}
    \item[1)] $f\in \K(X)$
    \item[2)] There exists a monic polynomial $P(t)\in \C[X][t]$ such that $P(f)=0$ on $X(\C)$.
    \item[3)] The graph $\Gamma_f$ is Zariski closed in $X(\C)\times\mathbb{A}^1(\C)$.
\end{enumerate}
\end{center}

\end{theorem}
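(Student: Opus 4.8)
The plan is to prove the two implications separately, putting essentially all of the work into the "if" direction, since the "only if" direction only collects results that are already available.

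For the implication $f\in\KO(X(\C))\Rightarrow(1),(2),(3)$, I would argue as follows. Condition (1) is immediate: by definition the elements of $\KO(X(\C))$ are rational functions, hence they define elements of $\K(X)$. Condition (2) is exactly Proposition \ref{PropRatiContEstEntiere}, which asserts that every element of $\KO(X(\C))$ is integral over $\C[X]$; the integral-dependence relation provides the required monic $P(t)\in\C[X][t]$ with $P(f)=0$ on $X(\C)$. Condition (3) is one half of Corollary \ref{PropWhitney}, namely that a continuous rational function has Zariski-closed graph.

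The substance of the proof is the converse $(1),(2),(3)\Rightarrow f\in\KO(X(\C))$, and the key point is that Euclidean continuity of $f$ is \emph{not} assumed but must be deduced from (2) and (3); once continuity is established, (1) supplies rationality and we are done. I would first equip the Zariski-closed set $\Gamma_f$ of condition (3) with its reduced structure, obtaining an affine variety $Y$ with $\C[Y]=\C[X][t]/J$, where $J$ is the (radical) ideal of $\Gamma_f$, and with $Y(\C)=\Gamma_f$. Condition (2) gives $P\in J$ with $P$ monic in $t$, so $\C[Y]$ is a quotient of the finite $\C[X]$-module $\C[X][t]/(P)$; hence $\C[Y]$ is a finite $\C[X]$-module and the first projection $\pi:Y\to X$ is a finite morphism. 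Since $\Gamma_f$ is the graph of a function defined on all of $X(\C)$, the restriction $\piC:Y(\C)=\Gamma_f\to X(\C)$, $(x,f(x))\mapsto x$, is a bijection.

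Now comes the decisive step. The morphism $\pi$ is finite and $\piC$ is bijective, so by the remark following Lemma \ref{LemMorphFiniDoncFerme} the map $\piC$ is an Euclidean homeomorphism. Writing $q:Y(\C)\to\C$ for the second-coordinate projection, which is the restriction of a polynomial and hence Euclidean-continuous, we have $f=q\circ\piC^{-1}$ on $X(\C)$. As a composition of the continuous maps $\piC^{-1}$ and $q$, the function $f$ is continuous for the Euclidean topology; together with (1) this yields $f\in\KO(X(\C))$. I expect the continuity argument to be the only real obstacle: in contrast with Theorem \ref{TheoEntContDonneRat} and Corollary \ref{PropWhitney}, continuity cannot be taken for granted here, and it is precisely the Zariski-closedness of the graph that upgrades the finite bijective projection to a homeomorphism. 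I would also remark that (1) is in fact redundant, since a finite bijective $\pi$ is subintegral by Theorem \ref{TheoSubSsiBij}, hence birational, so $q\in\C[Y]$ already corresponds to a rational function on $X$; but keeping (1) makes the criterion more convenient to apply in the examples that follow.
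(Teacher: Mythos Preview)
Your proof is correct, and for the converse direction it is genuinely different from the paper's argument. Both proofs introduce an auxiliary variety $Y$ mapping finitely to $X$ with $Y(\C)=\Gamma_f$, but the paper builds $Y$ from $\C[X][f]\subset\K(X)$ (so condition (1) is used at the outset) and then has to verify $Y(\C)=\Gamma_f$ by an inclusion-plus-surjectivity argument; you instead take $Y$ to be the reduced Zariski closure $\Gamma_f$ itself, so that $Y(\C)=\Gamma_f$ is tautological. More importantly, the paper concludes by invoking the full machinery developed earlier: the universal property of the seminormalization places $\C[Y]$ inside $\C[X^+]$, and then Theorem \ref{TheoRatioContEgalPolySN} produces $g\in\KO(X(\C))$ with $g=f$. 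Your route is shorter and more self-contained: once $\pi$ is finite and $\piC$ bijective, the remark after Lemma \ref{LemMorphFiniDoncFerme} immediately gives that $\piC$ is a Euclidean homeomorphism, and continuity of $f=q\circ\piC^{-1}$ follows. Your approach thus avoids both the universal property and the main isomorphism $\KO(X(\C))\simeq\C[X^+]$; the paper's approach, by contrast, exhibits Theorem \ref{TheoGraphEntiereDoncCont} as a direct application of that isomorphism and of the seminormalization framework. Your closing observation that (1) is redundant is also correct, since subintegrality at the minimal primes forces the total rings of fractions to coincide.
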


\begin{proof}
The direct implication is given by Propositions \ref{PropRatiContEstEntiere} and \ref{PropWhitney}. Conversely, suppose that $f$ verifies the three properties above. We consider the map
$$\begin{array}{cccc}
     \psi :&\C[X][t]&\to&\K(X)  \\
     &Q(t)& \mapsto & Q(f)
\end{array}$$
and write $\C[Y] \simeq \C[X][t]/\ker\psi \simeq \C[X][f]$ with $\pi:Y\to X$ the morphism induced by $\C[X]\inj \C[Y]$. We then have
$$\C[X] \inj \C[Y] \simeq \C[X][f] \subset \K(X)$$
So $\K(X)\simeq \K(Y)$ and $\pi$ is birational. Moreover $\C[Y]$ is a finite $\C[X]$-module because so is $\C[X][t]/<P(t)>$ and 
$$ \C[Y] \simeq \C[X][t]/\ker\psi \simeq ( \C[X][t]/<P(t)> ) / (\ker\psi / <P(t)> ) $$
Hence $\pi : Y \to X$ is a finite birational morphism. We want to show that $\piC$ is bijective. By hypothesis, there exists an ideal $I_f \subset \C[X][t]$ such that $\Gamma_f = \Zcal(I_f)$. We have $I_f \subset \ker\psi$ because $\forall Q\in I_f$ , $\forall x\in X(\C)$ ,  $Q(x,f(x))=0$. So 
$$ Y(\C) = \Zcal(\ker\psi) \subset \Zcal(I_f)=\Gamma_f = \{(x,f(x)) \mid x\in X(\C)\} $$
Then $$\forall x\in X(\C)\text{, \hspace{0.1cm}}\piC^{-1}(x) = \varnothing\text{ or }\{f(x)\}.$$
Since $\pi$ is finite, $\piC$ is surjective. So, for all $x\in X(\C)$, $\piC^{-1}(x)$ is not empty, which means that $\piC^{-1}(x) = \{f(x)\}$. We have shown $Y(\C)=\Gamma_f$ thus $\piC$ is bijective with the inverse map $x\mapsto (x;f(x))$. Thus $\piC$ is a finite birational and bijective morphism. From the universal property of the seminormalization, we get
$$ \C[X] \inj \C[Y] \inj \C[X^+] $$
that induces $$ X^+ \xrightarrow{\pi^+_Y} Y \xrightarrow{\pi} X$$
So, if we note $t\in \C[Y]$ such that $t:(x,f(x))\mapsto f(x)$, Theorem \ref{TheoRatioContEgalPolySN} gives us the existence of $g \in \KO(X(\C))$ such that $t \circ (\pi^+_Y)_{\C} = g \circ \piC \circ (\pi^+_Y)_{\C} $ so $t = g \circ \piC$.
Therefore, since $\piC$ is surjective, we get for all $x\in X(\C)$ 
$$g(x) = g\circ\piC(x;f(x)) = t(x;f(x)) = f(x)$$
Thus $f=g\in \KO(X(\C))$ which concludes the proof.
\end{proof}

\begin{ex}
Let $V = \spec(\C[x,y]/<y^2+(x^2-1)x^4=0>)$ and
$$\begin{array}{cccc}
     f: & V(\C) & \to & \C  \\
     & (x;y) & \mapsto & \left\{ \begin{array}{l} y/x\text{ if }x\neq 0 \\ 0\text{ else} \end{array}\right.
\end{array}$$ 
The function $f$ is a root of the polynomial $P_{(x;y)}(t) = t^2 + x^2(x^2-1)$. Since $P_{(0;0)}(t) = t^2$, all the values of $f$ are given by the roots of $xt-y =0 $ on $\{x\neq 0\}$ and else by the root of $P_{(0;0)}(t)$. Thus we have $\Gamma_f = \Zcal(<y^2+(x^2-1)x^4;xt-y;t^2 + x^2(x^2-1)>)$ which is a Zariski closed set. So Theorem \ref{TheoGraphEntiereDoncCont} tells us that $f\in \KO(V(\C))$.
\end{ex}

\begin{rmq}
The key thing in the criterion we gave is that $f$ is defined on all $V(\C)$. When one add rational functions to the coordinate ring of a variety to get its normalization, the functions are only defined on a Z-open set. Consider again the previous example but with the fraction $\frac{y}{x^2}\in \K(V)$. It is a root of $P_{(x;y)}(t) = t^2 + (x^2-1)$ on $\{x\neq 0\}$. But $P_{(0;0)} = t^2 -1$ as two distinct roots, so ${\Zcal(<y^2+(x^2-1)x^4;x^2t-y;t^2 + (x^2-1)>)}$ cannot be the graph of a map on $V(\C)$.\\
\end{rmq}

\begin{ex}
Now we give an example which illustrates the fact that a continuous rational function is a \textit{stratified-regular} function (see remark after Proposition \ref{PropRatContSontRegulues}). Let $V$ be a variety such that the set of its closed points, seen in $\A^4(\C)$, is defined by the following equations

$$V(\C) : \left\{ \begin{array}{lr}
     x^2+zyx+ty^2 = 0 & (1)  \\
     z^2+z^2t+t^3+yt = 0 & (2)\\ 
     t^2x^2+x^2y-y^2z^2 = 0 & (3)
     \end{array} \right.$$

Let $f:V(\C)\to \C$ such that

$$ f = \left\{ \begin{array}{lll}
     & x/y & \text{ if } y\neq 0  \\
     &z/t & \text{ if } y = 0\text{ and }t\neq 0  \\ 
     &0 & \text{ else}
\end{array} \right.$$ 

To show that $f$ is indeed a continuous rational function on $V(\C)$, we show that $f$ satisfies the three properties of Theorem \ref{TheoGraphEntiereDoncCont}. In particular we look at its graph $\Gamma_f$ and show that it is the following Z-closed set in $\A^5(\C)$ defined by

$$\Gamma_f = \left\{ \begin{array}{lr}
     x^2+zyx+ty^2 = 0 & (1)\\
     z^2+z^2t+t^3+yt = 0 & (2)\\ 
     t^2x^2+x^2y-y^2z^2 = 0 & (3)\\
     yX-x = 0 & (4)\\
     X^2 + zX +t = 0 & (5)\\
     t^2X^2 + xX - z^2 = 0 & (6)
     \end{array} \right.$$

First of all, let's verify that $f$ is indeed a root of the polynomials $(4),(5)$ and $(6)$ on $V(\C)$. We start by looking on $\Dcal(y)\subset V(\C)$ where $f=x/y$ :\\

$\begin{array}{lll}
     (4) & : & y\left(\frac{x}{y}\right)-x = 0 \\[0.3cm]
     (5) & : & \left(\frac{x}{y}\right)^2+z\left(\frac{x}{y}\right)+t = \frac{x^2+zyx+ty^2}{y^2} = 0\text{ by }(1)  \\ [0.3cm]
     (6) & : & t^2\left(\frac{x}{y}\right)^2+x\left(\frac{x}{y}\right)-z^2 = \frac{t^2x^2+x^2y-y^2z^2}{y^2} = 0\text{ by }(3)
\end{array}$\\

Now we check that it is still true on $\Zcal(y)\cap\Dcal(t)$ :\\

$\begin{array}{lll}
     (5) & : & \left(\frac{z}{t}\right)^2+z\left(\frac{z}{t}\right)+t = \frac{z^2+z^2t+t^3+yt}{t^2} = 0\text{ by }(2)  \\ [0.3cm]
     (6) & : & t^2\left(\frac{z}{t}\right)^2+x\left(\frac{z}{t}\right)-z^2 = \frac{t^2z^2+xzt-z^2t^2}{t^2} = 0\text{ since }y=0 \text{ implies }x=0\text{ by (1).}
\end{array}$\\

 We get that $f$ is a root of the polynomials $(4),(5)$ and $(6)$. It remains to see if the values of $f$ are completely determined by those polynomials.\\
 
If $y\neq 0$, then the equation $(4)$ forces the value of $f$ to be $x/y$ on $\Dcal(y)$. If $y = 0$ and $t\neq 0$, then the system $(4),(5),(6)$ becomes $\left\{ \begin{array}{l} X^2 + zX + t = 0  \\ X^2 = z^2 / t^2 \end{array} \right.$ which forces the value of $f$ to be $z/t$ on $\Zcal(y)\cap\Dcal(t)$. Finally if $y=t=0$, then the system becomes $X^2=0$.\\

We have shown that $\Gamma_f$ is completely described by the system given above. Thus $\Gamma_f$ is Z-closed. By (5) then $f$ is integral on $\C[X]$. By (4) then $f$ is rational on $V(\C)$. So Theorem \ref{TheoGraphEntiereDoncCont} tells us that $f \in \KO(V(\C))$.\\
\end{ex}

\begin{rmq}
The jacobian matrix of the equations defining $V$ is
$$ \text{Jac}(V) = \begin{pmatrix}
2x + yz & zx+2ty & xy & y^2\\
0 & t & 2z+2zt & z^2+y\\
2t^2x+2yx & x^2-2yz^2 & -2y^2z & 2tx^2
\end{pmatrix}$$ 

and, if $y=0$, it becomes :

$$ \text{Jac}(V)_{|_{\{y=0\}}} = \begin{pmatrix}
0&0&0&0\\
0 & t & 2z+2zt & z^2\\
0&0&0&0
\end{pmatrix}$$ 

So we have $\{y=0\} \subset V_{sing}(\C)$ which is coherent with Proposition \ref{PropRatEstRegSurRegX}.\\
\end{rmq}

\begin{rmq}
In the equations defining $\Gamma_f$, we could replace (6) by $(x-zt^2)X-(t^3+z^2)$.
\end{rmq}

\begin{ex}
It is shown in \cite{Dav} that, for plane curves, the seminormality can be read on the geometry of the singularities. A curve in $\A^2(\C)$ is seminormal if and only if its singularities are double points whose tangents are linearly independent. We illustrate this by looking at the example of three lines crossing at the origin in $\A^2(\C)$.

Let $V = \spec(\C[X;Y]/<XY(Y-X)>)$. It is clear that $V$ is not seminormal because the lines are not linearly independent. Let $f:V(\C)\to \C$ be such that
$$ f = \left\{ \begin{array}{lll}
     & \frac{2xy}{x+y} & \text{ if } (x;y)\neq (0;0)  \\
     &0 & \text{ else}
\end{array} \right.$$
We can see that $f$ is a root of the polynomial $P_{(x;y)}(t)=t^2-xy$ and that $\Gamma_f$ is equal to $\Zcal(xy(y-x);(x+y)t-2xy;t^2-xy)$ because $0$ is the only root of $P_{(0;0)}$. So, by Theorem \ref{TheoGraphEntiereDoncCont}, we have $f \in \KO(V(\C))$. Furthermore, we have $\Gamma_f = V^+(\C)$ because the graph corresponds to three linearly independent lines in $\A^3(\C)$. Indeed it is the union of three lines crossing at the origin with direction vectors (1,0,0), (0,1,0) and (1,1,1).\\
Another way to see that $f$ is continuous is that $f_{\mid x=0} = 0$, $f_{\mid y=0} = 0$ and $f_{\mid x=y} = x$. So $f$ is a continuous rational function on each irreducible component of $V(\C)$. Thus $f \in \KO(V(\C))$ by Lemma \ref{Lemirreducible}.
\end{ex}

\begin{rmq}
Let $X$ be an affine variety. Since $\C[X]\inj \C[X^+]$ is finite and $\C[X]$ is a noetherian ring, one can show that the process of adding elements $f_i \in \KO(X(\C))$ with $f_{i+1} \notin \C[X][f_1,...,f_i]$ ends after a finite number of steps. 
\end{rmq}

\subsection{Nullstellensatz for complex regulous functions.}

A very important property of the regulous functions in real algebraic geometry is the regulous version of the Nullstellensatz (\cite{FHMM} Theorem 5.24). We give here a regulous version of the Nullstellensatz for complex affine varieties. One can also find a proof of this result for c-holomorphic functions with algebraic graph in \cite{BDT}.

We consider the same notations as in Theorem \ref{TheoRatioContEgalPolySN}. So, if $X$ is an affine variety and $\pi : X^+ \to X$ is its seminormalization morphism, we consider the isomorphism $$\begin{array}{lccc}
     \varphi : & \KO(X(\C)) & \xrightarrow{\sim} & \C[X^+]\\
     & f & \mapsto & f\circ \piC 
\end{array}$$

Let $I \subset \KO(X(\C))$. We write
$$\Zcal^0(I) := \{ x\in X(\C) \mid \forall f\in I\text{, }f(x)=0 \}$$
Let $E \subset X(\C)$. We write $$ \Jcal^0(E) := \{ f\in \KO(X(\C)) \mid \forall x\in E\text{, } f(x) = 0\}$$
Let $I$ be an ideal of $\KO(X(\C))$, then see that $I$ is of the form $I = <g_1,...,g_n>$ by noetherianity of $\C[X^+]$. So $$\piC^{-1}(\Zcal^0(I)) = \piC^{-1}(\cap \Zcal^0(g_i)) = \cap \piC^{-1}(\Zcal^0(g_i)) = \cap \Zcal(g_i\circ \piC) = \Zcal(\varphi(I))$$

\begin{theorem}[Nullstellensatz]\label{TheoNullstellensatz}
Let $X$ be an affine complex variety and $I$ be an ideal of $\KO(X(\C))$. Then 
$$ \Jcal^0(\Zcal^0(I)) = \sqrt{I} $$
\end{theorem}

\begin{proof}
One of the inclusion is clear. For the other inclusion, we consider $f \in \Jcal^0(\Zcal^0(I))$. It is equivalent to say that $\Zcal^0(I) \subset \Zcal^0(f)$. Then $\Zcal(\varphi(I)) = \piC^{-1}(\Zcal^0(I)) \subset \piC^{-1}(\Zcal^0(f)) = \Zcal(f\circ \piC)$. Then, by the classical Nullstellensatz on $\C[X^+]$, we can consider $n\in \mathbb{N}$ such that $(f\circ \piC)^n \in \varphi(I)$. So we get $f^n = \varphi^{-1}(\varphi(f)^n) \in \varphi^{-1}(\varphi(I)) = I$ and finally $f\in \sqrt{I}$.
\end{proof}

We also get a version of the Nullstellensatz where we want to study only one element of $\KO(X(\C))$. We will need this result in Section \ref{SectionCritères}. One can do the exact same proof as Theorem \ref{TheoNullstellensatz} by adapting it with the following notations. For $f \in \KO(X(\C))$, $I \subset \C[X][f]$ and $E\subset X(\C)$, consider $\Zcal^f(I) := \{ x\in X(\C) \mid \forall g\in I\text{, }g(x)=0 \}$ and $\Jcal^f(E) := \{ g\in \C[X][f] \mid \forall x\in E\text{, } g(x) = 0\}$. Also, consider $\C[Y] \simeq \C[X][t]/I_f$, $\pi : Y\to X$ and $\varphi : \C[X][f] \xrightarrow{\sim} \C[Y]$.

\begin{theorem}\label{TheoNullstellensatzElement}
Let $X$ be an affine complex variety. Let $f \in \KO(X(\C))$ and $I$ be an ideal of $\C[X][f]$ then 
$$ \Jcal^f(\Zcal^f(I)) = \sqrt{I} $$
\end{theorem}

\section{Classical results on seminormality with regulous functions.}\label{SectionClassicalResults}

We revisit several results on seminormality using regulous functions. In this section $X$ will be an affine variety. If $f\in \KO(X(\C))$, then we have shown in the previous section that $\Gamma_f$, the graph of $f$, is a Z-closed set of $X(\C)\times\A^1(\C)$. So there exists an ideal $I_f \subset \C[X][t]$ such that $\Gamma_f = \Zcal(I_f)$. Moreover, we have $\C[X][t]/I_f \simeq \C[X][f]$ and, since $f$ is integral over $\C[X]$, the ring $\C[X][f]$ is a $\C[X]$-module of finite type. We note $\Cond(f) := (\C[X] : \C[X][f]) = {\{ p\in \C[X] \mid p.\C[X][f] \subseteq \C[X] \}}$ the conductor of $\C[X]$ in $\C[X][f]$.\\

\subsection{Definitions and criteria of seminormality in commutative algebra.}\label{SectionCritères}

In this paper we have used Traverso's definition of the seminormalization \cite{T} where, for an integral extension of rings $A\inj B$, the seminormalization of $A$ in $B$ is given by \[A^+_B = \{b\in B \mid \forall \p \in \spec(A)\text{, }b_{\p}\in A_{\p}+\Rrm(B_{\p}) \}\] But, as explain in \cite{V}, there are several definitions of the seminormalization for commutatives rings. For Hamann a ring $A$ is seminormal in $B$ if, for $n\in\mathbb{N}^*$, $A$ contains all the elements $b\in B$ such that $b^n,b^{n+1}\in A$. The equivalent definition used by Leahy and Vitulli consist in replacing $n$ and $n+1$ by any positive relatively prime integers. Finally Swan gave an other definition of the seminormalization which is not equivalent to the previous ones for general commutative rings. Our goal in this section is to reinterpret those definitions in term of regulous functions and to see that they are all equivalent for affine rings.

\begin{definition}\label{DefElementaryExtension}
Let $A\inj B$ be an extension of rings and $b\in B$ be such that $b^2$, $b^3\in\C[X]$. In this case, we say that $A\inj A[b]$ is an elementary subintegral extension.
\end{definition}

It is shown in \cite{Swan} that, if a ring $A$ is not seminormal in an other ring $B$, then we can always find a proper elementary subintegral subextension of $A\inj B$. The following proposition gives a similar result with regulous functions.

\begin{prop}\label{PropSwan}
Let $X$ be a complex affine variety and $f \in \KO(X(\C))\setminus \C[X]$. Then there exists an element $g\in \C[X][f]\setminus \C[X]$ such that $g^n \in \Cond(f) \subset \C[X]$, for all integer $n\geqslant 2$.
\end{prop}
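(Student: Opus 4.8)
The plan is to work with $A := \C[X]$ and $C := \C[X][f]$ and to reduce the whole statement to producing a single element. First I would record the structural facts. By Proposition \ref{PropRatiContEstEntiere} the function $f$ is integral over $A$, so $C$ is a finite $A$-module; it is reduced, being a subring of $\KO(X(\C)) \simeq \C[X^+]$ via Theorem \ref{TheoRatioContEgalPolySN}. Moreover $A \inj C$ is a subextension of the subintegral extension $A \inj \C[X^+]$, hence itself subintegral by Lemma \ref{LemTransSub}. Since $f \notin A$ we have $A \neq C$, so the conductor $\mathfrak{c} := \Cond(f) = (A : C)$ is a proper ideal of both $A$ and $C$.

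The key reduction is that it suffices to find $g \in C \setminus A$ with $g^2 \in \mathfrak{c}$: as $\mathfrak{c}$ is an ideal of $C$, one then gets $g^n = g^{n-2}g^2 \in \mathfrak{c}$ for every $n \geq 2$. Passing to $\bar A := A/\mathfrak{c} \inj \bar C := C/\mathfrak{c}$, this amounts to exhibiting a nonzero square-zero element of $\bar C$ not lying in $\bar A$. Two features will drive the argument: the inclusion $\bar A \inj \bar C$ is again subintegral (a quotient of a subintegral extension by an ideal of $C$), and, because $\mathfrak{c}$ is the conductor, $\bar A$ contains no nonzero ideal of $\bar C$, i.e. $(\bar A :_{\bar C}\bar C) = 0$.

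The heart of the proof is to show that $\bar C$ is not reduced. Suppose it were. Then $\bar A$ is reduced as well, and the subintegral homeomorphism $\spec \bar C \to \spec \bar A$ matches minimal primes and induces isomorphisms on residue fields; for reduced rings this makes $\bar A \inj \bar C$ a finite birational extension with the same total ring of fractions. Clearing denominators of finitely many $\bar A$-module generators of $\bar C$ would then place a nonzerodivisor of $\bar A$ inside $(\bar A :_{\bar C}\bar C)$, contradicting $(\bar A :_{\bar C}\bar C) = 0$. Hence the nilradical $N$ of $\bar C$ is a nonzero nilpotent ideal; letting $k \geq 2$ be its index of nilpotency, the ideal $N^{k-1}$ is nonzero, each of its elements squares to $0$, and it cannot lie in $\bar A$ (again because $\bar A$ contains no nonzero ideal of $\bar C$). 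Choosing $\bar g \in N^{k-1}\setminus \bar A$ and lifting to $g \in C$ yields $g \notin A$ and $g^2 \in \mathfrak{c}$, which is exactly what is needed.

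I expect the main obstacle to be the non-reducedness step: one must check carefully that reduction modulo the conductor preserves subintegrality and that the residue-field isomorphisms at the minimal primes really yield a finite \emph{birational} extension, so that the ``conductor contains a nonzerodivisor'' argument applies. Once $\bar C$ is known to be non-reduced, the passage from a nilpotent element to a square-zero one via the top nonvanishing power $N^{k-1}$, together with the lifting, is immediate; no localization is required, and the argument works verbatim for reducible $X$ since $\C[X]$ is reduced with finitely many minimal primes.
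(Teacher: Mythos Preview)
Your argument is correct. The reduction to finding a square-zero class in $\bar C = C/\mathfrak c$ outside $\bar A$ is clean, the preservation of subintegrality under the quotient by $\mathfrak c$ is easy to verify (primes of $C$ over a prime $\p\supset\mathfrak c$ of $A$ automatically contain $\mathfrak c$), and the birationality step goes through because for a reduced Noetherian ring the total ring of fractions is the product of the residue fields at the minimal primes, and the subintegral homeomorphism matches these and their residue fields. The nilpotency-index trick $N^{k-1}$ then produces the required element.

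Your route is genuinely different from the paper's. The paper exploits the stratified presentation of a regulous function (Proposition~\ref{PropRatContSontRegulues}) together with the Nullstellensatz for $\C[X][f]$ (Theorem~\ref{TheoNullstellensatzElement}) to prove the concrete fact that $f$ itself lies in $\sqrt{\Cond(f)}$, and then takes $g=f^m$ for the largest $m$ with $f^m\notin\Cond(f)$. This is more explicit: the witness is a power of (a mild modification of) $f$. Your argument, by contrast, is purely ring-theoretic and never looks at the regulous structure or invokes any Nullstellensatz; it only uses that $\C[X]\inj\C[X][f]$ is a proper finite subintegral extension of Noetherian rings, and would prove the analogous statement for any such extension. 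The trade-off is that your $g$ is not identified as a power of $f$, but the proposition does not ask for that.
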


\begin{proof}
We know by Proposition \ref{PropRatContSontRegulues} that $f$ can be writen in the following way
$$f = \left\{ \begin{array}{cll}
    p_1/q_1&\text{ if }&q_1\neq 0\\
    p_2/q_2&\text{ if }&q_1=0\text{ and }q_2\neq 0\\
    \vdots\\
    p_{n-1}/q_{n-2}&\text{ if }&q_1=...=q_{n-2}=0\text{ and }q_{n-1}\neq 0\\
    p_n&\text{ if }&q = q_1 = ... = q_{n-1} = 0
\end{array} \right.$$
We consider the minimal integer $s$ such that $q_{s+1}f \notin \C[X]$. If $s$ exists, we continue the proof with $q_{s+1}f-p_{s+1} \notin \C[X]$. If $s$ doesn't exists, we take $f - p_n$. So we can suppose that
$$f = \left\{ \begin{array}{cll}
    p_1/q_1&\text{ if }&q_1\neq 0\\
    p_2/q_2&\text{ if }&q_1=0\text{ and }q_2\neq 0\\
    \vdots\\
    p_s/q_s&\text{ if }&q_1=...=q_{s-1}=0\text{ and }q_s\neq 0\\
    0&\text{ if }&q = q_1 = ... = q_s = 0
\end{array} \right.$$
with $q_if\in\C[X]$ and so $q_i\in\sqrt{\Cond(f)}$ for all $i\leqslant s$. Let's consider $I = <q_1^{n_1},...,q_s^{n_s}>$ with $n_i\in \mathbb{N}$ such that $q_i^{n_i} \in \Cond(f)$. See that $\Zcal^f(I) \subset \Zcal^f(f)$. So, by Theorem \ref{TheoNullstellensatzElement}, we have $f\in \sqrt{I}$. Since $I\subset \Cond(f)$, we get $f\in \sqrt{\Cond(f)}$. So we can consider the minimal integer $m\geqslant 1$ such that $f^m\notin \Cond(f)$ and $f^{m+1} \in \Cond(f)$. The fact that $f^m\notin \Cond(f)$ means that there exists $h = a_0 + a_1.f+...+a_d f^d \in \C[X][f]$, where $d:=\deg(f)-1$, such that $f^m.h = a_0.f^m + a_1.f^{m+1}+...+a_{m+d} f^{m+d} \notin \C[X]$. But since $f^{m+1} \in \Cond(f)$, we get $a_1.f^{m+1}+...+a_{m+d} f^{m+d} \in \C[X]$. It implies that $a_0f^m\notin \C[X]$ and so $f^m\notin \C[X]$. Finally, we write $g := f^m$ and we have find an element $g\in \C[X][f]\setminus\C[X]$ such that $g^n\in\Cond(f) \subset \C[X]$, for all integer $n\geqslant2$.
\end{proof}

We recover now, with regulous functions, that Traverso, Hamann and Leahy-Vitulli's definitions of the seminormalization are equivalent. In order to do this, we show the following Lemma.

\begin{lemma}\label{LemNMCLOSED}
Let $f\in K(X)$ such that there exists $n,m\in\mathbb{N}^*$ with $\gcd(n,m)=1$ and $f^n,f^m\in\C[X]$. Consider $u,v\in\mathbb{Z}$ such that $un+vm=1$ and assume that $u>0$ and $v<0$. Then $$g=\left\{ \begin{array}{l}
    f \text{ if }f^m\neq 0\\
    0 \text{ else}
\end{array} \right. \in \KO(X(\C))$$
\end{lemma}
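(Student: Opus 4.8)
The plan is to apply Theorem~\ref{TheoEntContDonneRat}: it suffices to produce a monic polynomial $P\in\C[X][t]$ with $P(g)=0$ on all of $X(\C)$ (a monic polynomial has leading coefficient $1$, hence is automatically nonzero on each irreducible component) together with the Euclidean continuity of $g$. First I would record the algebraic structure of $f$. Since $un+vm=1$ with $u>0$ and $v<0$, one has the identity $f=(f^n)^u(f^m)^v=(f^n)^u/(f^m)^{-v}$ in $\K(X)$, with both $f^n$ and $f^m$ lying in $\C[X]$ and $-v>0$; hence $f$ is regular on the Zariski-open set $X(\C)\setminus Z$, where $Z:=\Zcal(f^m)$, and there $g=f$. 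This already shows that $g$ is regular, hence continuous, away from $Z$. The natural candidate is $P(t):=t^n-f^n$, which is monic of degree $n$.

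For the polynomial relation, the key observation is the polynomial identity $(f^n)^m=(f^m)^n$ in $\C[X]$ (both equal $f^{nm}$ in $\K(X)$, and all three terms lie in $\C[X]$). Evaluating at a point $x\in Z$ gives $f^n(x)^m=f^m(x)^n=0$, so $f^n(x)=0$; thus $Z\subseteq\Zcal(f^n)$. Now $P(g)=0$ holds everywhere: on $X(\C)\setminus Z$ we have $g(x)=f(x)$ and $f(x)^n=f^n(x)$, while on $Z$ we have $g(x)=0$ and $f^n(x)=0$, so $g(x)^n-f^n(x)=0$ in both cases.

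The crux is continuity of $g$ at the points of $Z$, where $g$ is defined by the value $0$ rather than by the rational expression for $f$. Here I would use the modulus identity $|g(x)|=|f^m(x)|^{1/m}$, valid on all of $X(\C)$: off $Z$ it follows from $g(x)^m=f(x)^m=f^m(x)$, and on $Z$ both sides vanish. Since $f^m$ is a polynomial and $t\mapsto t^{1/m}$ is continuous on $[0,\infty)$, the function $|g|$ is continuous on $X(\C)$. At a point $x_0\in Z$ we have $g(x_0)=0$, so for $x$ near $x_0$ the quantity $|g(x)-g(x_0)|=|g(x)|=|f^m(x)|^{1/m}$ is small by continuity of $|f^m|^{1/m}$; combined with the continuity of $g$ already established on $X(\C)\setminus Z$, this shows $g$ is continuous on all of $X(\C)$. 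Theorem~\ref{TheoEntContDonneRat} then yields $g\in\KO(X(\C))$. The only delicate step is precisely this continuity at $Z$, and the modulus identity $|g|=|f^m|^{1/m}$ is exactly what tames the pole of $f$ along $Z$.
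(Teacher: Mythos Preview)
Your proof is correct, and it takes a genuinely different route from the paper's. The paper applies Theorem~\ref{TheoGraphEntiereDoncCont}: after reducing to the irreducible case via Lemma~\ref{Lemirreducible}, it writes down explicit equations $(f^m)^{-v}t-(f^n)^u=0$ and $t^m-f^m=0$ and checks that these cut out exactly $\Gamma_g$ in $X(\C)\times\A^1(\C)$, so the graph is Z-closed and the three hypotheses of Theorem~\ref{TheoGraphEntiereDoncCont} are satisfied. Continuity is never established directly; it falls out of the graph criterion. You instead prove Euclidean continuity by hand via the modulus identity $|g|=|f^m|^{1/m}$ and then invoke Theorem~\ref{TheoEntContDonneRat} with the monic polynomial $t^n-f^n$. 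Your approach is arguably more elementary: it avoids identifying the precise ideal of the graph, and the monic polynomial is nonzero on every component automatically, so no separate reduction to the irreducible case is needed. The paper's approach is more purely algebraic and illustrates the Z-closed-graph machinery developed in Section~\ref{ExamplesOfRat}, which is the tool the author is promoting at that point in the exposition.
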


\begin{proof}
Let $X_i$ be an irreducible component of $X$. If $f_{\mid_{X_i(\C)}}^m = 0$, then $f_{\mid_{X_i(\C)}} = 0$ so we define $g_{\mid_{X_i(\C)}} = 0$. So, by Lemma \ref{Lemirreducible}, we can suppose that $X$ is irreducible and that $f^m\neq 0$. We have $$g=\left\{ \begin{array}{l}
    f \text{ if }f^m\neq 0\\
    0 \text{ else}
\end{array} \right. = \left\{ \begin{array}{cl}
    (f^n)^u/(f^m)^{-v} & \text{ if }f^m\neq 0\\
    0 & \text{ else}
\end{array} \right.$$
So the graph of $g$ is $\Gamma_g = \Zcal((f^m)^{-v}t-(f^n)^u;t^m-f^m)$ and, by Theorem \ref{TheoGraphEntiereDoncCont}, we get $g\in \KO(X(\C))$.
\end{proof}

The Lemma tells us that a fraction with one of the property appearing in the following criteria extend into a regulous function. So if $X$ is seminormal it has to contain the elements mentioned in criteria 3),4) and 5). Moreover Proposition 5.2 shows that the seminormalization is the reunion of all of this kind of elements. So it is sufficient for $X$ to contain those elements in order to be seminormal. This is how we obtain the following Proposition.

\begin{prop}[Hamann and Leahy-Vitulli's criteria]\label{PropVitulliCriterion}
Let $X$ be an affine complex variety. Then the following statements are equivalent :
 \begin{enumerate}
    \item $X$ is seminormal.
    \item $ \forall f\in \C[X']$ the conductor of $\C[X]$ in $\C[X][f]$ is a radical ideal of $\C[X][f]$.
    \item $ \forall f\in \K(X)\text{ \hspace{0.1cm}}f^2,f^3\in \C[X] \implies f\in \C[X]$.
    \item $\forall f\in \K(X)\text{ \hspace{0.1cm}}f^n,f^m\in \C[X] \implies f\in \C[X]$, for some $m,n\in \mathbb{N}$ relatively prime.
    \item $\forall f\in \K(X)\text{ \hspace{0.1cm}}f^n,f^{n+1}\in \C[X] \implies f\in \C[X]$, for some $n\in \mathbb{N}$.
\end{enumerate}
\end{prop}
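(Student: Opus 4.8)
The plan is to read seminormality through the dictionary of Theorem \ref{TheoRatioContEgalPolySN}: since $\C[X^+]\simeq\KO(X(\C))$ and the inclusion $\C[X]\inj\KO(X(\C))$ corresponds to $\C[X]\inj\C[X^+]$, the variety $X$ is seminormal if and only if every continuous rational function is already polynomial, i.e. $\KO(X(\C))=\C[X]$. I would then organise the equivalences as a hub: prove that (1) implies each of (2)--(5), and that each of (2)--(5) implies (1), rather than chasing one long cycle. The two engines are Lemma \ref{LemNMCLOSED}, which upgrades a fraction having two coprime powers in $\C[X]$ to a genuine regulous function, used for $(1)\Rightarrow(3),(4),(5)$; and Proposition \ref{PropSwan}, which manufactures a badly behaved element whenever $X$ fails to be seminormal, used for all the converse implications.

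For necessity I would assume $X$ seminormal and take $f\in\K(X)$ with two coprime powers in $\C[X]$: the pair $(2,3)$ for (3), a general coprime pair with both exponents $\geqslant 2$ for (4), and $(n,n+1)$ for (5). Lemma \ref{LemNMCLOSED} produces the continuous truncation $g$ of $f$, and seminormality gives $g\in\KO(X(\C))=\C[X]$. The key point is that $g$ and $f$ coincide in $\K(X)$: on the dense open locus where $f$ is regular one has $f=0$ exactly where $f^m=0$, so $g=f$ there, whence $f=g\in\C[X]$. For $(1)\Rightarrow(2)$ I would take $f\in\C[X']$ and $g\in\C[X][f]$ with $g^k\in\Cond(f)$ (the case $k=1$ being trivial, so $k\geqslant 2$); then every $h\in\C[X][f]$ satisfies $(gh)^j\in\C[X]$ for all $j\geqslant k$, because $g^k$ absorbs $\C[X][f]$ into $\C[X]$. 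Applying Lemma \ref{LemNMCLOSED} to $gh$ with the coprime exponents $k,k+1$ and using $\KO(X(\C))=\C[X]$ gives $gh\in\C[X]$, so $g\in\Cond(f)$ and $\Cond(f)$ is radical.

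For sufficiency I would argue by contraposition, uniformly via Proposition \ref{PropSwan}. If $X$ is not seminormal, pick $f\in\KO(X(\C))\setminus\C[X]\subseteq\C[X']\setminus\C[X]$; the proposition yields $g\in\C[X][f]\setminus\C[X]$ with $g^k\in\Cond(f)\subseteq\C[X]$ for every $k\geqslant 2$. This single element defeats every criterion at once: since $g^k\in\C[X]$ for all $k\geqslant 2$ while $g\notin\C[X]$, it contradicts (3) (take $k=2,3$), the witnessing coprime pair of (4) and the witnessing consecutive pair of (5); and since $g\notin\C[X]$ forces $g\notin\Cond(f)$ while $g^2\in\Cond(f)$, the conductor $\Cond(f)$ is not radical, contradicting (2). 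Here one uses that $g$ lies in $\K(X)$, which holds because $\C[X][f]\subseteq\C[X']\subseteq\K(X)$.

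The step I expect to be most delicate is matching the quantifiers so that the statement is genuinely equivalent rather than vacuous: in (4) and (5) the exponents must be read as $\geqslant 2$, since a coprime pair containing $1$ (or $n=1$ in (5)) makes the implication trivially true and carries no information. With that convention the implications $(1)\Rightarrow(4),(5)$ hold for \emph{every} admissible exponent choice through Lemma \ref{LemNMCLOSED}, whereas $(4),(5)\Rightarrow(1)$ need only the \emph{existence} of one admissible pair, which is precisely what the element from Proposition \ref{PropSwan} contradicts. The remaining care points are the identification $g=f$ in $\K(X)$ in the necessity argument and the conductor computation for $(1)\Leftrightarrow(2)$; both are short once the reduction to $\KO(X(\C))=\C[X]$ is in place.
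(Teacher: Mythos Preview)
Your proposal is correct and follows essentially the same approach as the paper: both use the identification $\KO(X(\C))=\C[X^+]$, then invoke Lemma \ref{LemNMCLOSED} for the forward directions and Proposition \ref{PropSwan} for the converse directions. The only notable difference is in $(1)\Rightarrow(2)$: the paper argues by contraposition (from a non-radical conductor it extracts $g^{n-1}h\notin\C[X]$ with $(g^{n-1}h)^2,(g^{n-1}h)^3\in\C[X]$, then applies Lemma \ref{LemNMCLOSED} to produce a regulous function not in $\C[X]$), whereas you argue directly (for each $h\in\C[X][f]$ you apply Lemma \ref{LemNMCLOSED} to $gh$ with exponents $k,k+1$); these are equivalent reorganisations of the same computation.
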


\begin{proof}
$2) \implies 1)$. If $X$ is not seminormal, then there exists $f\in \KO(X(\C))\setminus \C[X]$. So Proposition $\ref{PropSwan}$ gives an element $g\in \C[X][f]$ such that $g$ belongs to the radical of $(\C[X] : \C[X][f])$ but not to the conductor itself. The fact that $g\notin \C[X]$ and $g^n\in\C[X]$ for all $n\geqslant2$, shows that $3),4)$ or $5) \implies 1)$.\\ 

$1) \implies 2)$. Suppose there exists $f\in \K(X)$ and $g\in \sqrt{\Cond(f)}\setminus\Cond(f)$. We can consider $n\in \mathbb{N}^*$ such that $g^{n-1}\notin\Cond(f)$ and $g^n\in\Cond(f)$. So there exists $h\in\C[X][f]$ such that $g^{n-1}h\notin \C[X]$ and $(g^{n-1}h)^2$, $(g^{n-1}h)^3 \in \C[X]$. Then, by Proposition \ref{LemNMCLOSED}, we get 
$$\psi=\left\{ \begin{array}{cl}
    g^{n-1}h & \text{ if }(g^{n-1}h)^2\neq 0\\
    0 & \text{ else}
\end{array} \right. \in \KO(X(\C))$$
So $\psi \in \KO(X(\C))\setminus \C[X]$ which means that $X$ is not seminormal.\\


$1)\implies 4)$. Take $n,m\in \mathbb{N}$ such that $\gcd(n;m)=1$ and assume $X$ is seminormal. Consider $f\in\K(X)$ such that $f^n,f^m\in \C[X]$. Then, by Proposition \ref{LemNMCLOSED}, we can extend $f$ to a regulous function. So we get $f \in \KO(X(\C)) = \C[X]$. Since, for all $n\in \mathbb{N}$, we have $\gcd(n;n+1)=1$, we also get $1)\implies 3)$ and $5)$.
\end{proof}

We recover now that Traverso and Swan's definitions of the seminormalization are equivalent for affine rings by using regulous functions. First we get the following Proposition which gives us a way to construct regulous functions from polynomials that respect a certain type of relation.

\begin{prop}\label{PropConstructRegulues}
Let $p,q\in\C[X]$ be such that there exists $n\in \mathbb{N}^*$ with $p^n \in <q^{n+1}>$. Then $$f = \left\{ \begin{array}{cl}
    p/q & \text{ if }q\neq 0\\
    0 & \text{ else}
\end{array} \right. \in \KO(X(\C))$$
\end{prop}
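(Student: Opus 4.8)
The plan is to verify the three conditions of Theorem \ref{TheoGraphEntiereDoncCont}, namely that $f$ is rational, integral over $\C[X]$, and has Zariski-closed graph. First, using Lemma \ref{Lemirreducible}, I would reduce to the case where $X$ is irreducible: since restriction to a component $X_i$ is a ring homomorphism sending the relation $p^n \in \langle q^{n+1}\rangle$ to $(p_{|X_i})^n \in \langle (q_{|X_i})^{n+1}\rangle$ in the domain $\C[X_i]$, and sending $f$ to the analogously defined function, it suffices to treat each irreducible component separately. So assume $X$ irreducible and fix $a \in \C[X]$ with $p^n = a q^{n+1}$. If $q = 0$ then $p^n = 0$ forces $p = 0$, whence $f = 0 \in \C[X] \subset \KO(X(\C))$ and we are done; so assume $q \neq 0$, i.e.\ $q$ is a non-zero-divisor.

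Next I would check the first two conditions. On the Z-dense Z-open set $\Dcal(q) = \{q \neq 0\}$ we have $f = p/q \in \K(X)$, giving rationality. For integrality, the key computation is that on $\Dcal(q)$
$$ f^n = \frac{p^n}{q^n} = \frac{a q^{n+1}}{q^n} = a q, $$
and since both sides vanish on $\Zcal(q)$ (where $f = 0$ by definition), the identity $f^n = aq$ in fact holds on all of $X(\C)$. Hence $f$ is a root of the monic polynomial $P(t) = t^n - aq \in \C[X][t]$, which settles condition~2.

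The remaining and most delicate point is condition~3: that $\Gamma_f$ is Zariski closed. I claim $\Gamma_f = \Zcal\big(\langle qt - p,\ t^n - aq\rangle\big)$ inside $X(\C)\times\A^1(\C)$. The inclusion $\subseteq$ follows from the two identities above together with the fact that $q(x)=0$ implies $p(x)=0$ (again from $p^n = aq^{n+1}$, so that $q(x)f(x)-p(x)=0$ even on the fiber over $\Zcal(q)$). For $\supseteq$, take $(x,s)$ satisfying $q(x)s = p(x)$ and $s^n = a(x)q(x)$: if $q(x)\neq 0$ then $s = p(x)/q(x) = f(x)$, while if $q(x) = 0$ then $s^n = 0$ forces $s = 0 = f(x)$. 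In both cases $(x,s)\in\Gamma_f$. Thus $\Gamma_f$ is Zariski closed, and Theorem \ref{TheoGraphEntiereDoncCont} yields $f\in\KO(X(\C))$. The main obstacle is precisely this graph description, and specifically controlling the fiber over $\Zcal(q)$, where one must exploit the hypothesis to force $p$ (and hence $s$) to vanish so that the two defining equations genuinely pin down the single value $f(x)=0$.
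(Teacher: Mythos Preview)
Your proof is correct and follows essentially the same route as the paper: reduce to the irreducible case via Lemma~\ref{Lemirreducible}, dispose of the case $q=0$, and then apply Theorem~\ref{TheoGraphEntiereDoncCont} using the monic relation $t^n - aq$ together with $qt - p$ to describe $\Gamma_f$. Your argument is in fact more detailed than the paper's, which simply asserts the graph equality $\Gamma_f = \Zcal(I_X;\, qt-p;\, t^n - qh)$ without spelling out the two inclusions.
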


\begin{proof}
Consider $n\in\mathbb{N}^*$ such that $p^n \in <q^{n+1}>$. Then there exists $h\in\C[X]$ such that $p^n = h q^{n+1}$. So, if $X_i$ is an irreducible component of $X$ such that $q=0$, we get that $p=0$ and so we define $f_{\mid_{X_i(\C)}} = 0$. Then, by Lemma \ref{Lemirreducible}, we can suppose $X$ irreducible and $q\neq0$. In this case, the graph of $f$ is given by $\Gamma_f = \Zcal(I_X;qt-p ; t^n-qh)$ and we can apply Theorem \ref{TheoGraphEntiereDoncCont} to conclude.
\end{proof}
\newpage
\begin{lemma}\label{LemSwanEtVitulli}
Let $X$ be an affine variety and $p,q \in \C[X]$. We write $$f = \left\{ \begin{array}{l}
    p/q \text{ if }q\neq 0\\
    0 \text{ else}
\end{array} \right.$$\\
Then $$p^2 = q^3\text{ if and only if }f^2 = q\text{ and }f^3 = p$$
In this case $f\in \KO(X(\C))$ and $$\Gamma_f = \Zcal(I_X;qt-p;t^2-q) = \Zcal(I_X;t^2-q;t^3-p)$$
\end{lemma}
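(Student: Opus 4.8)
The plan is to treat the three assertions in turn: the scalar equivalence, membership in $\KO(X(\C))$, and the two descriptions of the graph.

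For the equivalence I would argue pointwise on $X(\C)$, using that $\C$ is a field. Assume first $p^2 = q^3$ in $\C[X]$. At a point $x$ with $q(x)\neq 0$ we have $f(x) = p(x)/q(x)$, so $f(x)^2 = p(x)^2/q(x)^2 = q(x)^3/q(x)^2 = q(x)$ and $f(x)^3 = f(x)\,f(x)^2 = p(x)$. At a point with $q(x) = 0$ the relation forces $p(x)^2 = q(x)^3 = 0$, hence $p(x) = 0$; since $f(x) = 0$ there, both $f(x)^2 = 0 = q(x)$ and $f(x)^3 = 0 = p(x)$ hold. Thus $f^2 = q$ and $f^3 = p$ as functions. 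Conversely, if $f^2 = q$ and $f^3 = p$, then $q^3 = (f^2)^3 = f^6 = (f^3)^2 = p^2$ as functions on $X(\C)$; since $\C[X]$ is reduced (we take $I$ radical), the evaluation map $\C[X]\to \C^{X(\C)}$ is injective, so $p^2 = q^3$ holds in $\C[X]$.

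Assuming now this common condition, membership $f\in\KO(X(\C))$ follows immediately from Proposition \ref{PropConstructRegulues} with $n = 2$, since $p^2 = q^3 \in <q^3>$. Alternatively, once the graph is computed below, the presentation of $\Gamma_f$ by the monic polynomial $t^2 - q$ (witnessing integrality over $\C[X]$) together with $qt - p$ (witnessing rationality) lets one invoke Theorem \ref{TheoGraphEntiereDoncCont} directly, which reconfirms $f\in\KO(X(\C))$ in a self-contained way.

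Finally I would identify $\Gamma_f$ by a direct double inclusion, splitting $X(\C)$ into $\{q\neq 0\}$ and $\{q = 0\}$. A point $(x,s)$ lies in $\Zcal(I_X;qt-p;t^2-q)$ iff $q(x)s = p(x)$ and $s^2 = q(x)$: when $q(x)\neq 0$ the first equation gives $s = p(x)/q(x) = f(x)$ and the second is automatic from $p^2 = q^3$; when $q(x) = 0$ we have $p(x) = 0$, the first equation is vacuous and $s^2 = 0$ forces $s = 0 = f(x)$. Likewise $(x,s)\in \Zcal(I_X;t^2-q;t^3-p)$ means $s^2 = q(x)$ and $s^3 = p(x)$: if $q(x)\neq 0$ then $s\neq 0$ and $s = s^3/s^2 = p(x)/q(x) = f(x)$, while if $q(x) = 0$ then $s^2 = 0$ gives $s = 0 = f(x)$ and $s^3 = 0 = p(x)$. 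Hence both vanishing loci equal $\Gamma_f = \{(x,f(x))\mid x\in X(\C)\}$, and in particular they coincide. The only points requiring care, and the closest thing to an obstacle, are the implication $q(x) = 0 \Rightarrow p(x) = 0$ extracted from $p^2 = q^3$ and the use of reducedness of $\C[X]$ in the converse direction; everything else is routine case-checking.
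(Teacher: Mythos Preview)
Your proof is correct and follows essentially the same approach as the paper for the equivalence and for membership in $\KO(X(\C))$ (via Proposition~\ref{PropConstructRegulues}); the only cosmetic difference is that the paper first reduces to irreducible components with $q\not\equiv 0$ via Lemma~\ref{Lemirreducible}, whereas you work pointwise throughout. Your argument is in fact more complete: you explicitly verify the two descriptions of $\Gamma_f$, which the paper states but does not prove, and you make explicit the reducedness step needed to pass from $p^2=q^3$ as functions to an equality in $\C[X]$.
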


\begin{proof}
Let $X_i$ be an irreducible component of $X$ such that $q=0$. Then $f_{\mid_{X_i(\C)}} = 0$ and the lemma becomes trivial. So, by Lemma \ref{Lemirreducible}, we can suppose $X$ irreducible with $q\neq 0$. In this case, if $p^2=q^3$ then $f^2 = p^2/q^2 = q^3/q^2 = q$ and $f^3 = p^3/q^3 = p^3/p^2 = p$ if $p,q \neq 0$. Moreover, if $q=0$, then $f^2 = q = f^3 = p = 0$. So $f^2=q$ and $f^3 = p$ on $X(\C)$. Conversely, if $f^2=q$ and $f^3=p$ then $p^2 = (f^3)^2 = (f^2)^3 = q^3$. We get that $f\in\KO(X(\C))$ by Proposition \ref{PropConstructRegulues}.
\end{proof}

The Lemma shows that the relations of the form $p^2 = q^3$ produce regulous functions and Proposition \ref{PropSwan} tells us that the seminormalization is the reunion of all of this kind of functions. Hence we obtain Swan's criterion.

\begin{prop}[Swan's criterion]
Let $X$ be an affine complex variety. Then the following statements are equivalent :
\begin{enumerate}
    \item $X$ is seminormal.
    \item For all $p,q\in \C[X]$ such that $p^2=q^3$ there exists $f\in \C[X]$ with $f^2=q$ and $f^3=p$.
\end{enumerate}
\end{prop}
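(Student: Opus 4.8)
The plan is to prove the two implications separately, relying on the identification obtained through Theorem \ref{TheoRatioContEgalPolySN} that $X$ is seminormal if and only if $\KO(X(\C)) = \C[X]$, together with Lemma \ref{LemSwanEtVitulli} and the Swan-type Proposition \ref{PropSwan}. This is exactly the same strategy used for Proposition \ref{PropVitulliCriterion}: Lemma \ref{LemSwanEtVitulli} provides the bridge from the numerical relation $p^2 = q^3$ to an honest regulous function, while Proposition \ref{PropSwan} guarantees that non-seminormality always produces a non-polynomial regulous function satisfying such relations.

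For the implication $1) \implies 2)$, I would argue directly. Assume $X$ is seminormal, so $\KO(X(\C)) = \C[X]$, and take $p,q\in\C[X]$ with $p^2=q^3$. Form the function $f$ equal to $p/q$ where $q\neq 0$ and to $0$ elsewhere. By Lemma \ref{LemSwanEtVitulli}, this $f$ lies in $\KO(X(\C))$ and satisfies $f^2=q$ and $f^3=p$. Seminormality then forces $f\in\C[X]$, which is precisely the element required by statement $2)$.

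For $2) \implies 1)$, I would proceed by contraposition. Suppose $X$ is not seminormal; then there exists $h\in\KO(X(\C))\setminus\C[X]$, and Proposition \ref{PropSwan} yields $g\in\C[X][h]\setminus\C[X]$ with $g^n\in\Cond(h)\subset\C[X]$ for every integer $n\geqslant 2$. Setting $q:=g^2$ and $p:=g^3$, both in $\C[X]$, we get $p^2 = g^6 = q^3$. If statement $2)$ held, there would exist $f\in\C[X]$ with $f^2=q=g^2$ and $f^3=p=g^3$. The crux is then to deduce $f=g$ as functions: at any point $x$, the equality $f(x)^2=g(x)^2$ forces $g(x)=\pm f(x)$, and $f(x)^3=g(x)^3$ excludes the minus sign unless both vanish, since $g(x)=-f(x)$ would give $2f(x)^3=0$, hence $f(x)=0=g(x)$. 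Thus $f=g$ pointwise on $X(\C)$, so $g=f\in\C[X]$, contradicting $g\notin\C[X]$. Therefore statement $2)$ must fail, completing the contrapositive.

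The step I expect to be the main obstacle is this final pointwise comparison in the second implication, namely recovering $f=g$ from the two relations $f^2=g^2$ and $f^3=g^3$. Everything else reduces to a direct application of the preceding results, but this elementary observation is what genuinely links Swan's relations $p^2=q^3$ back to the non-polynomial regulous function produced by Proposition \ref{PropSwan}, and hence forces the contradiction.
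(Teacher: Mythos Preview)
Your proposal is correct and follows essentially the same approach as the paper: both directions use Lemma \ref{LemSwanEtVitulli} and Proposition \ref{PropSwan} in the same way. The only minor difference is the final step in $2)\Rightarrow 1)$: the paper deduces $f=g$ by observing that $f=g^3/g^2=p/q=g$ on $\Dcal(q)$ and extending by continuity, whereas your pointwise argument from $f^2=g^2$ and $f^3=g^3$ is an equally valid (and arguably more direct) way to reach the same conclusion.
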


\begin{proof}
$1)\implies 2)$. Suppose $X$ is seminormal and let $p,q\in\C[X]$ with $p^2=q^3$. Then by Lemma \ref{LemSwanEtVitulli} we get an element $f\in \KO(X(\C))$ such that $f^2=q$ and $f^3=p$. Since $X$ is seminormal, we have $f\in \C[X]$.\\

$2)\implies 1)$. Suppose that $X$ is not seminormal, then Proposition \ref{PropSwan} gives us an element $g\in \KO(X(\C))\backslash\C[X]$ with $g^2,g^3\in \C[X]$. So if we write $q:=g^2$ and $p:=g^3$, Lemma \ref{LemSwanEtVitulli} tells us that $p^2 = q^3$. Thus, if there exists $f\in \C[X]$ with $f^2=q$ and $f^3=p$, we get $f=g$ on $\Dcal(q)$. By continuity, we get $f = g$ on $X(\C)$ which is impossible because $g\notin \C[X]$.
\end{proof}

\subsection{Localization and seminormalization.}

It is shown, for general rings, that the operation of localization and seminormalization commute. In Traverso \cite{T}, it is proved by considering special subextensions between the seminormalization and the normalization of the ring. In Swan \cite{Swan}, it is proved by considering elementary subintegral extensions of the ring (see Definition \ref{DefElementaryExtension}). We propose here, because we will need it in Proposition \ref{PropLoja}, a proof with regulous functions but only in the case of the localization by a single element because we need $S^{-1}\C[X]$ to be affine.\\
\newpage
\begin{prop}[Seminormalization and localization by a single element]\label{PropLocalisation}
Let $X$ be a complex affine variety and $S$ be a multiplicative set of $\C[X]$ such that $S=\{1,q,q^2,...\}$ with $q\in \C[X]$
. Then
$$ S^{-1}\C[X^+]=(S^{-1}\C[X])^+ $$
\end{prop}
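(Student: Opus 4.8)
The plan is to identify both rings as subrings of the localized total ring of fractions $S^{-1}\K(X)$ and to establish the two inclusions separately. I would write $U := \mathcal{D}(q) \subseteq X$ for the principal open set, so that $\C[U] = S^{-1}\C[X]$ is affine (this is where the hypothesis $S=\{1,q,q^2,\dots\}$ is used), and $V := (\pi^+)^{-1}(U) = \mathcal{D}((\pi^+)^*q) \subseteq X^+$, so that $\C[V] = S^{-1}\C[X^+]$. Throughout I use that normalization commutes with localization, so that $S^{-1}\C[X'] = (S^{-1}\C[X])'$, exactly as invoked in Proposition \ref{PropRegSurPtsNormaux}; this places $(S^{-1}\C[X])^+$ and $S^{-1}\C[X^+]$ inside the common ring $S^{-1}\K(X)$.

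For the inclusion $S^{-1}\C[X^+] \subseteq (S^{-1}\C[X])^+$, I would first observe that the restriction $\pi^+|_V : V \to U$ is finite (it is the localization of the finite morphism $\pi^+$), birational (both $V$ and $U$ are Zariski open in varieties sharing the function field of $X$), and bijective on closed points (it is the restriction of the bijection $\piC^+$). Theorem \ref{TheoSubSsiBij} then gives that $\C[U] \inj \C[V]$ is subintegral. Since $\C[V]$ is integral over $\C[U]$ and contained in $(S^{-1}\C[X])'$, Proposition \ref{PropPU} yields $\C[V] \subseteq \C[U]^+ = (S^{-1}\C[X])^+$, which is the desired inclusion.

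For the reverse inclusion I would take $g \in (S^{-1}\C[X])^+$. Applying Theorem \ref{TheoRatioContEgalPolySN} to the affine variety $U$ identifies $(S^{-1}\C[X])^+ = \C[U^+] \simeq \KO(U(\C))$, so $g$ is a continuous rational function on $U(\C)$ which, by Proposition \ref{PropRatiContEstEntiere}, is integral over $\C[U]=\C[X]_q$: there is a monic relation $g^n + \sum_{i<n} (c_i/q^{m_i}) g^i = 0$ with $c_i \in \C[X]$. Setting $h := q^K g$ with $K \geq \max_i(m_i+1)$ and clearing denominators produces a monic relation $h^n + \sum_{i<n} a_i h^i = 0$ on $U(\C)$, where $a_i = c_i\, q^{K(n-i)-m_i} \in \C[X]$ with each exponent $\geq 1$, so that every $a_i$ vanishes on $\mathcal{Z}(q)(\C)$. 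I would then extend $h$ to all of $X(\C)$ by setting $h=0$ on $\mathcal{Z}(q)(\C)$: the monic relation forces $h$ to be locally bounded, via the standard estimate $|h| \leq \max(1,\, n\max_i|a_i|)$ on a compact neighbourhood of any point, and since each $a_i$ vanishes on $\mathcal{Z}(q)(\C)$, any subsequential limit $\ell$ of $h$ along a sequence approaching a point of $\mathcal{Z}(q)(\C)$ satisfies $\ell^n=0$, forcing $h \to 0$; hence $h$ is continuous on $X(\C)$. Being continuous and a root of a monic polynomial over $\C[X]$, Theorem \ref{TheoEntContDonneRat} gives $h \in \KO(X(\C)) = \C[X^+]$, and therefore $g = h/q^K \in S^{-1}\C[X^+]$.

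The main obstacle is precisely this last continuous-extension step: rationality and integrality of $g$ transfer freely from $U(\C)$, but controlling the behaviour of $q^K g$ as one approaches the hypersurface $\mathcal{Z}(q)(\C)$ is delicate and relies on combining the boundedness coming from the monic integral relation with the vanishing of its coefficients on $\mathcal{Z}(q)(\C)$. As a preliminary simplification one may reduce to $X$ irreducible via Lemma \ref{Lemirreducible}, which also disposes of the components of $X$ on which $q$ vanishes identically (these simply do not meet $U$, and there $h$ is set to $0$).
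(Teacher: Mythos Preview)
Your proof is correct and reaches the same conclusion, but the route differs from the paper's in both inclusions.

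For the easy inclusion the paper simply observes that restricting an element of $\KO(X(\C))$ to $\Dcal(q)$ and dividing by a power of $q$ preserves continuity and rationality, so $S^{-1}\KO(X(\C)) \subset \KO(\Dcal(q))$ is immediate. Your argument via subintegrality of $\C[U]\inj\C[V]$ and the universal property is more structural; it is correct but heavier than needed here.

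For the hard inclusion the paper does not argue analytically. Instead it invokes Theorem \ref{TheoGraphEntiereDoncCont}: given $g\in\KO(\Dcal(q))$, its graph over $\Dcal(q)$ is cut out by a finite system $(*)$ of polynomials with coefficients in $S^{-1}\C[X]$, including a monic one. The paper then clears denominators by a carefully chosen $s\in S$ (a product of \emph{squares} of all denominators appearing) to obtain a new system $(**)$ over $\C[X]$; the squaring guarantees that on $\Zcal(q)$ every coefficient of $(**)$ vanishes except the leading $t^d$ of the monic equation, forcing $t=0$ there. Thus $(**)$ has the unique solution $sg$ over $\Dcal(q)$ and $0$ over $\Zcal(q)$, so $\Gamma_{sg}$ is Zariski closed in $X(\C)\times\A^1(\C)$ and Theorem \ref{TheoGraphEntiereDoncCont} gives $sg\in\KO(X(\C))$.

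Your argument replaces this graph manipulation by a direct continuity proof: from the monic integral relation you extract the elementary bound $|h|\leq\max(1,n\max_i|a_i|)$, and the vanishing of the $a_i$ on $\Zcal(q)$ forces every subsequential limit of $h$ to satisfy $\ell^n=0$. This is shorter and avoids tracking the full defining system of the graph; it lands on Theorem \ref{TheoEntContDonneRat} (equivalently Corollary \ref{CoroClotureDansFoncCont}) rather than Theorem \ref{TheoGraphEntiereDoncCont}. The paper's approach, by contrast, stays closer to the algebro-geometric language of the section and reuses the graph criterion that was developed precisely for producing such examples. Both are valid; yours is more analytic, the paper's more in keeping with its emphasis on the Zariski-closed-graph characterization.
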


\begin{proof}
First, see that it is equivalent to show $$S^{-1}\KO(X(\C)) = \KO(\Dcal(q))$$
The inclusion $S^{-1}\KO(X(\C)) \subset \KO(\Dcal(q))$ is clear because if $f\in \KO(X(\C))$, then for all $s\in S$ the function $f/s$ is still rational and continuous on $\Dcal(q)$. To get the other inclusion, we must show
$$\forall g\in \KO(\Dcal(q)) \text{ \hspace{0.5cm}} \exists s\in S \text{ \hspace{1cm}} sg = \left\{ \begin{array}{ll}
    s(x)g(x) & \text{ if }x\in \Dcal(q)\\
    0 & \text{ else }
\end{array} \right.\in\KO(X(\C))$$

So let $g\in \KO(\Dcal(q))$. Then, by Theorem \ref{TheoGraphEntiereDoncCont}, it verifies the three following properties :
\begin{enumerate}
    \item $g\in \K(\Dcal(q))$
    \item $g$ is the root of a monic polynomial whose coefficients are in $S^{-1}\C[X]$.
    \item The graph $\Gamma_g \subset \Dcal(q)\times\A^1(\C)$ of $g$ is Z-closed.
\end{enumerate}
In other words, $g\in \K(\Dcal(q))$ and there exists a system of polynomials 
$$ (*) : \left\{ \begin{array}{l}
      P(x,t) = t^d + \frac{a_{d-1}}{s_{d-1}}.t^{d-1} + ... + \frac{a_0}{s_0} = 0 \\ [0.2cm]
      F_1(x,t) = \frac{a_{1,d_1}}{s_{1,d_1}} .t^{d_1} + \frac{a_{1,d_1-1}}{s_{1,d_1-1}} .t^{d_1-1} + ... + \frac{a_{1,0}}{s_{1,0}} = 0\\ [0.1cm]
      \vdots\\ [0.1cm]
      F_n(x,t) = \frac{a_{n,d_n}}{s_{n,d_n}} .t^{d_n} + \frac{a_{n,d_n-1}}{s_{n,d_n-1}} .t^{d_n-1} + ... + \frac{a_{n,0}}{s_{n,0}} = 0
\end{array} \right.$$
such that, for all $x\in \Dcal(q)$, the element $g(x)$ is its only solution. We have to see if there exists an $s \in S$ such that $sg$ verifies a system of the similar form on $\C[X]$. Lets consider
$$ s = \left( \prod_{k=0}^{d-1} s_k \right)^2 . \left( \prod_{i,j\in\llbracket 1;n \rrbracket\times \llbracket 0;d_i \rrbracket } s_{i,j} \right)^2$$ 
We show that $sg$ is the only solution of the following system whose coefficients are in $\C[X]$ :
$$ (**) : \left\{ \begin{array}{l}
      \tilde{P}(x,t) = t^d + \frac{s}{s_{d-1}}a_{d-1}.t^{d-1} + ... + \frac{s}{s_0}a_0.s^{d-1} = 0 \\[0.4cm]
      \tilde{F_1}(x,t) = \frac{s}{s_{1,d_1}}a_{1,d_1}.t^{d_1} + \frac{s}{s_{1,d_1-1}}a_{1,d_1-1}.s.t^{d_1-1} + ... + \frac{s}{s_{1,0}}a_{1,0}.s^{d_1} = 0\\[0.2cm]
      \vdots\\[0.2cm]
      \tilde{F_n}(x,t) = \frac{s}{s_{n,d_n}}a_{n,d_n}.t^{d_n} + \frac{s}{s_{n,d_n-1}}a_{n,d_n-1}.s.t^{d_n-1} + ... + \frac{s}{s_{n,0}}a_{n,0}.s^{d_n} = 0
\end{array} \right.$$
Indeed, for all $x\in \Dcal(q)$, we have
$$ \left\{ \begin{array}{l}
      \tilde{P}(x,sg(x))=s^d(x)P(x,g(x))=0\\[0.2cm]
      \tilde{F_i}(x,sg(x)) = s^{d_i+1}(x)F_i(x,g(x))=0\\
\end{array} \right.$$
So $sg(x)$ is the only solution of the system $(**)$ for all $x\in \Dcal(q)$. Now see that, in the definition of $s$, we carefully took squared elements so that if $x\in \Zcal(s)=\Zcal(q)$ then all the coefficients in (**) vanish except $t^d$ in $P(x,t)$. Thus, for all $x\notin \Dcal(q)$, we get $t=0$ and so $sg(x)$ is the only solution of the system $(**)$ for all $x\in X(\C)$.
\end{proof}

The fact that localization and seminormalization commute leads to look at seminormality directly at the points of a variety.

\begin{definition}
Let $X$ be an affine variety. We define the set of seminormal points in $X$ by
$$\SN(X) := \{ x\in X \mid \Ocal_{X,x} \text{ is seminormal} \}$$
and the seminormal points of $X(\C)$ by $\SN(X(\C)) = \SN(X)\cap X(\C)$.
\end{definition}

Now we can improve proposition \ref{PropRegSurPtsNormaux} and be more precise about the points where regulous functions are regular.

\begin{prop}\label{PropRatSontRegSurPointsSN}
Let $X$ be an affine variety and $f \in \KO(X(\C))$. Then $f\in \Ocal_{X,x}$ for all $x\in \SN(X(\C))$.
\end{prop}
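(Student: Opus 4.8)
The plan is to localise the correspondence of Theorem \ref{TheoRatioContEgalPolySN} at the point $x$ and to exploit the fact that seminormalization commutes with localization, in exact parallel with the proof of Proposition \ref{PropRegSurPtsNormaux}, where normalization played the role that seminormalization plays here. Fix $f\in\KO(X(\C))$ and let $\pi^+:X^+\to X$ be the seminormalization morphism. By Theorem \ref{TheoRatioContEgalPolySN} the function $f$ corresponds to a genuine polynomial $\varphi(f)=f\circ\piC^+\in\C[X^+]$, and since $\pi^+$ is finite and birational this pulls $f$ into the coordinate ring of $X^+$. The goal is therefore to show that near $x$ the varieties $X$ and $X^+$ have the same local ring, so that $f\circ\piC^+$, being polynomial on $X^+$, descends to a regular germ at $x$.

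First I would set up the local picture. Because $\C[X]\inj\C[X^+]$ is subintegral, Theorem \ref{TheoSubSsiBij} tells us that $\pi^+$ is bijective, so there is a unique point $x^+\in X^+$ with $\pi^+(x^+)=x$, and $\m_{x^+}$ is the only prime of $\C[X^+]$ lying over $\m_x$. A going-up argument (Lemma \ref{LemGoingUp}) shows that every prime of $\C[X^+]$ contracting into $\m_x$ is contained in $\m_{x^+}$; hence localising $\C[X^+]$ at the multiplicative set $\C[X]\setminus\m_x$ already produces the local ring $\Ocal_{X^+,x^+}=\C[X^+]_{\m_{x^+}}$, and the inclusion $\C[X]\inj\C[X^+]$ induces a local injection $\Ocal_{X,x}\inj\Ocal_{X^+,x^+}$.

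The heart of the argument is the identification $\Ocal_{X^+,x^+}\simeq(\Ocal_{X,x})^+$, that is, the local ring of the seminormalization at $x^+$ equals the seminormalization of the local ring $\Ocal_{X,x}$. This is precisely the commutation of seminormalization with localization, which is available as the general fact of Traverso and Swan recalled at the beginning of this subsection (and can also be assembled from Proposition \ref{PropLocalisation} by writing $\Ocal_{X,x}$ as the filtered union of the affine single-element localisations $S^{-1}\C[X]$ with $q\notin\m_x$). Granting this, the hypothesis $x\in\SN(X(\C))$ says exactly that $\Ocal_{X,x}$ is seminormal, i.e. $(\Ocal_{X,x})^+=\Ocal_{X,x}$, so the injection $\Ocal_{X,x}\inj\Ocal_{X^+,x^+}$ is an equality.

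To finish, I would observe that $f\circ\piC^+\in\C[X^+]\subset\Ocal_{X^+,x^+}=\Ocal_{X,x}$; unwinding the birational identification $\varphi$ between $f$ and $f\circ\piC^+$ then yields $f\in\Ocal_{X,x}$, as claimed. I expect the only delicate point to be exactly this commutation of seminormalization with localization at the prime $\m_x$ rather than at a single element: Proposition \ref{PropLocalisation} is stated only for localization by one element, so one must either invoke the general Traverso--Swan result or carefully justify the passage to the colimit. Everything else is formal manipulation of integral extensions together with the bijectivity of $\pi^+$.
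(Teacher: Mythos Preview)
Your proposal is correct and follows essentially the same route as the paper: pull $f$ back to $\C[X^+]$ via Theorem \ref{TheoRatioContEgalPolySN}, use the commutation of seminormalization with localization to identify $\Ocal_{X^+,x^+}$ with $(\Ocal_{X,x})^+$, and conclude by seminormality of $\Ocal_{X,x}$. The paper's proof is terser and simply invokes the Traverso--Swan commutation result without comment; your observation that Proposition \ref{PropLocalisation} alone does not cover localization at a prime is well taken, and the paper tacitly relies on the general result (cited from \cite{T}) rather than on Proposition \ref{PropLocalisation}.
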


\begin{proof}
We prove $1) \implies 2)$. Let $f\in \KO(X(\C))$ and $x\in \SN(X(\C))$, we consider $\pi^+ : X^+ \to X$ the seminormalization morphism of $X$ and $x^+ \in X^+(\C)$ such that $\pi^+(x^+) = x$. By Theorem \ref{TheoRatioContEgalPolySN}, we have $f\circ \pi^+_{\C} \in \C[X^+]$. Since seminormalization and localization commute, we have $f\circ \piC^+ \in \Ocal_{X^+,x^+} = \Ocal_{X,x}^+$ and since $x\in \SN(X(\C))$, we have that $\Ocal_{X,x} \inj \Ocal_{X,x}^+$ is an isomorphism. So $f\in \Ocal_{X,x}$.
\end{proof}

\begin{rmq}
An element in $\bigcap_{x\in \SN(X(\C))} \Ocal_{X,x}$ does not always extends by continuity. One can take the example $X = \spec(\C[x,y]/<y^2+(x^2-1)x^4>)$ given after Theorem \ref{TheoGraphEntiereDoncCont}. We have $\SN(X(\C)) = \{ (0;0) \} = \{x=0\}$ but the fraction $\frac{y}{x^2}$ cannot be continuously extended on $X(\C)$.
\end{rmq}

 We can deduce from Proposition \ref{PropRatSontRegSurPointsSN} a classical result about seminormalization.
 \begin{cor}
 Let $X$ be an affine variety. Then $X$ is seminormal if and only if $\SN(X(\C)) = X(\C)$.
 \end{cor}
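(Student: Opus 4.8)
The plan is to prove the two implications separately, using Proposition \ref{PropRatSontRegSurPointsSN} for the substantive direction and the classical commutation of localization with seminormalization for the converse. A preliminary observation I would make is that, by Theorem \ref{TheoRatioContEgalPolySN}, the isomorphism $\varphi : \KO(X(\C)) \to \C[X^+]$ carries the subring $\C[X]$ onto $\C[X] \subset \C[X^+]$ (since for $f\in\C[X]$ one has $f\circ\piC^+ = (\pi^+)^*f$). Consequently $\KO(X(\C)) = \C[X]$ if and only if $\C[X^+] = \C[X]$, i.e. if and only if $X$ is seminormal; this identification is what links the corollary to the results already proved.

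For the implication $\SN(X(\C)) = X(\C) \Rightarrow X$ seminormal, I would take an arbitrary $f \in \KO(X(\C))$ and apply Proposition \ref{PropRatSontRegSurPointsSN}: since by hypothesis every closed point is seminormal, $f \in \Ocal_{X,x}$ for all $x \in X(\C)$. Over the algebraically closed field $\C$ the Nullstellensatz gives $X(\C) = \spm(\C[X])$ and $\Ocal_{X,x} = \C[X]_{\m_x}$, so $f$ lies in $\bigcap_{\m \in \spm(\C[X])} \C[X]_{\m}$. By the standard fact that a reduced Noetherian coordinate ring equals the intersection of its localizations at all maximal ideals inside its total ring of fractions, this intersection is exactly $\C[X]$. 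Hence $f \in \C[X]$, which shows $\KO(X(\C)) = \C[X]$, and by the observation above $X$ is seminormal.

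For the converse, assume $X$ is seminormal, that is $\C[X] = \C[X^+]$, equivalently $\C[X]$ is a seminormal ring. Fixing $x \in X(\C)$, I would show that $\Ocal_{X,x} = \C[X]_{\p_x}$ is seminormal. Since seminormalization commutes with localization, one obtains $(\Ocal_{X,x})^+ \simeq (\C[X]^+)_{\p_x} = \C[X]_{\p_x} = \Ocal_{X,x}$, so $\Ocal_{X,x}$ is seminormal and $x \in \SN(X)$. As $x$ was arbitrary, $\SN(X(\C)) = X(\C)$.

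The only delicate point is invoking the commutation of seminormalization with localization at a prime ideal, which is slightly more general than the single-element statement established here in Proposition \ref{PropLocalisation}; I would simply cite it as the classical result already acknowledged at the start of this subsection (Traverso \cite{T}, Swan \cite{Swan}). The remaining ingredients, namely the equivalence $\KO(X(\C)) = \C[X] \iff X \text{ seminormal}$ and the equality $\C[X] = \bigcap_{\m} \C[X]_{\m}$, are standard, so I expect the proof to be short.
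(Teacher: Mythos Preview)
Your proof is correct and follows essentially the same approach as the paper: the direction $\SN(X(\C)) = X(\C) \Rightarrow X$ seminormal is obtained exactly as you do via Proposition \ref{PropRatSontRegSurPointsSN} and $\bigcap_{x\in X(\C)}\Ocal_{X,x}=\C[X]$, while the converse is deduced from the commutation of seminormalization with localization (which the paper also invokes as a classical result of Traverso, just as you suggest). Your preliminary observation and your remark on localization at a prime versus at a single element are more explicit than the paper's treatment, but the arguments are the same.
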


\begin{proof}
Suppose that $X$ is seminormal, then $\C[X^+] = \C[X]$. So it is clear that $\Ocal_{X,x} = \Ocal_{X^+,x^+}$ for all $x\in X(\C)$. Conversely, suppose that $\SN(X(\C)) = X(\C)$ and let $f \in \KO(X(\C))$. Then, by Proposition \ref{PropRatSontRegSurPointsSN}, we have $f\in \bigcap_{x\in X(\C)} \Ocal_{X,x} = \C[X]$. So $\KO(X(\C)) = \C[X]$ and $X$ is seminormal.
\end{proof}

\section{The sheaf of complex regulous functions.}\label{SectionTheSheafOf}

In the paper \cite{FHMM} introducing the regulous functions on real algebraic varieties, the authors define the sheaf of regulous functions because they wanted to recover some classical theorems from complex algebraic geometry for real varieties equipped with the sheaf of regulous functions. In the same spirit, we look at the sheaf of complex regulous functions and we will notably show that, if $X$ is an affine variety, then $(X,\KO_X)$ is isomorphic to $(X^+,\Ocal_{X^+})$. In particular $(X,\KO_X)$ is an affine scheme.\\

\begin{definition}
Let $X$ be an affine variety and $U$ be a Z-open set of $X(\C)$. Then we write $\KO_X(U)$ the set of continuous functions $f:U\to \C$ for the Euclidean topology which are regular on a Z-open Z-dense subset of $U$.
\end{definition}

\newpage
By doing the exact same proof as that of Lemma \ref{Lemirreducible}, we get the following result.
\begin{lemma}\label{LemIrreducibleLocal}
Let $X$ be an affine variety, $U$ be a Z-open set of $X(\C)$ and $f$ be an element of $\KO_X(U)$. We note $\{X_i\}_{i\in \llbracket1;n\rrbracket}$ the irreducible components of $X$. Then the following statements are equivalent :
\begin{enumerate}
    \item[1)] $f\in \KO_X(U)$
    \item[2)] $\forall i\in \llbracket 1;n \rrbracket \hspace{0.3cm} f_{|U\cap X_i(\C)} \in \KO_X(U\cap X_i(\C))$
\end{enumerate}
\end{lemma}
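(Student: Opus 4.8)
The plan is to relativize the proof of Lemma \ref{Lemirreducible} to the Z-open set $U$, replacing $X(\C)$ by $U$ throughout; the underlying reason this works is that the irreducible components of $U$ for the Zariski topology are precisely the nonempty sets $U\cap X_i(\C)$, so the combinatorics of components near a point of $U$ is identical to the global situation.

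For the implication $1)\Rightarrow 2)$, I would begin with $f\in\KO_X(U)$, regular on some Z-dense Z-open $V\subseteq U$. Fixing an index $i$ with $U\cap X_i(\C)\neq\varnothing$ (the empty case being vacuous), the set $\bigl(X\setminus\bigcup_{j\neq i}X_j\bigr)(\C)\cap U$ is a nonempty Z-open subset of $U$ contained in $X_i(\C)$, so by Z-density it meets $V$. Hence $f$ is regular on a nonempty, and therefore Z-dense, Z-open subset of the irreducible set $U\cap X_i(\C)$; since $f_{|U\cap X_i(\C)}$ is visibly still Euclidean-continuous, we get $f_{|U\cap X_i(\C)}\in\KO_X(U\cap X_i(\C))$.

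For $2)\Rightarrow 1)$, the regularity of $f$ on a Z-dense Z-open subset of $U$ follows from Lemma \ref{LemUnionDenseEstDense}, applied with $E=U$, the covering $\{U\cap X_i(\C)\}_i$, and the union of the dense open loci on which the restrictions $f_{|U\cap X_i(\C)}$ are regular. Euclidean continuity is then verified pointwise exactly as before: at a point lying on no pairwise intersection of components $f$ agrees locally with a single continuous restriction, while at a point $x\in\bigcap_{j\in J}(U\cap X_j(\C))$ one selects, for each $\epsilon>0$ and each $j\in J$, a Euclidean neighborhood $U_j$ of $x$ on which $|f(x)-f(y)|<\epsilon$ over $X_j(\C)\cap U_j$, and then intersects the finitely many $U_j$.

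The main subtlety — and really the only place where passing to an open subset is more than cosmetic — is ensuring that the density arguments remain valid inside $U$: one must observe that a nonempty Z-open subset of the irreducible set $U\cap X_i(\C)$ is automatically Z-dense in it, and that Lemma \ref{LemUnionDenseEstDense} is stated for an arbitrary topological space and covering and so applies directly with $E=U$. Beyond this, no new obstacle appears, and the argument is a faithful relativization of the proof of Lemma \ref{Lemirreducible}.
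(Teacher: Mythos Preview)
Your proposal is correct and takes exactly the approach the paper indicates: the paper does not write out a separate proof but simply states ``By doing the exact same proof as that of Lemma \ref{Lemirreducible}, we get the following result.'' Your relativization of that argument to $U$, together with the observation that the nonempty sets $U\cap X_i(\C)$ are the irreducible components of $U$ so that the density arguments go through unchanged, is precisely what is intended.
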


\begin{rmq}
The sets of the form $\Zcal^0(I)$ where $I$ is an ideal of $\KO(X(\C))$, which were introduced in Theorem \ref{TheoNullstellensatz}, define the same topology as the Zariski topology. Despite this fact, some differences may occur. For example, the sets of the form $\Dcal^0(f)$ defined below might only be affine on the seminormalization of the variety.
\end{rmq}

\begin{definition}
Let $X$ be an affine variety. We define the \textit{regulous topology} on $X(\C)$ to be the topology whose open sets are generated by the sets
$$ \Dcal^0(f) := \{x\in X(\C) \mid f(x)\neq 0\} $$
where $f$ is an element of $\KO(X(\C))$.
\end{definition}

Now that we have a local definition for regulous functions, we define the sheaf $\KO_X$.
\begin{prop}
Let $X$ be an affine variety. The presheaf defined by  $$ \begin{array}{cccc}
    \KO_X : & \{\text{ regulous open sets of } X(\C) \}^{\text{op}} & \to & \mathbf{Ring} \\
     & U & \mapsto & \KO_X(U)
\end{array} $$ is a sheaf
\end{prop}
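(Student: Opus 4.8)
The plan is to verify directly the two sheaf axioms for the presheaf of rings $\KO_X$, the restriction maps being ordinary restriction of functions. First I would record the key topological fact that every regulous open set is Zariski open: by the remark following Corollary \ref{PropWhitney}, for $f\in\KO(X(\C))$ the set $\Zcal^0(f)=\Gamma_f\cap(X(\C)\times\{0\})$ is Zariski closed, so each generating set $\Dcal^0(f)=f^{-1}(\C\setminus\{0\})$ is Zariski open, and in particular Euclidean open since $f$ is continuous. This observation is what makes the continuity argument and the notion of a Z-dense Z-open subset behave well on each piece $U_i$ of a cover.

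The separation axiom will be immediate: if a regulous open $U$ is covered by regulous opens $\{U_i\}$ and two sections of $\KO_X(U)$ agree on each $U_i$, they agree on $U$ because they are genuine $\C$-valued functions. For the gluing axiom, given compatible sections $f_i\in\KO_X(U_i)$, I would first assemble the well-defined function $f:U\to\C$ with $f_{|U_i}=f_i$, and check its Euclidean continuity locally: each $U_i$ is Euclidean open and $f_i$ is continuous, so $f$ is continuous on $U=\bigcup_i U_i$. The remaining and essential point is to exhibit a Z-dense Z-open subset of $U$ on which $f$ is regular.

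For this last point I would reduce to the case $X$ irreducible by applying Lemma \ref{LemIrreducibleLocal} first to each $f_i$ and then to the glued $f$, so that it suffices to check the condition on each irreducible component separately. Assuming $X$ irreducible, $\K(X)$ is a field and any two nonempty Zariski open subsets of $X(\C)$ meet; for each nonempty $U_i$ I would choose a nonempty Zariski open $V_i\subseteq U_i$ with $f_i=r_i$ on $V_i$ for some $r_i\in\K(X)$. On $\varnothing\neq V_i\cap V_j\subseteq U_i\cap U_j$ the compatibility $f_i=f_j$ forces $r_i=r_j$ in $\K(X)$, so all the $r_i$ coincide with a single $r\in\K(X)$; then $f=r$ on $W:=\bigcup_i V_i$, where $r$ is regular, and $W$ is Z-dense in $U$ by Lemma \ref{LemUnionDenseEstDense}, the $V_i$ being Z-dense in the $U_i$ which cover $U$. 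This gives $f\in\KO_X(U)$.

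The main obstacle I anticipate is precisely this verification that $f$ is rational on a dense open set: the danger is that the local rational representatives $r_i$ might fail to patch into a single rational function. This is where irreducibility is indispensable, since it guarantees that nonempty Zariski opens pairwise intersect, turning the mere agreement of $f_i$ and $f_j$ on overlaps into an honest equality in the field $\K(X)$. On a reducible variety one cannot argue this way directly, and the reduction through Lemma \ref{LemIrreducibleLocal} is exactly what circumvents the problem.
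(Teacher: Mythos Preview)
Your proof is correct and follows essentially the same approach as the paper: reduce to the irreducible case via Lemma \ref{LemIrreducibleLocal}, glue the continuous function, and then use that in an irreducible variety every nonempty Z-open is Z-dense to produce a Z-dense Z-open where the glued function is regular. The paper takes a slight shortcut you do not: rather than verifying that all the rational representatives $r_i$ coincide and taking the union $W=\bigcup_i V_i$, it simply picks any single index $i_0$ and observes that $V_{i_0}\cap U_{i_0}$ is already a nonempty Z-open (hence Z-dense in $U$) on which $f=f_{i_0}$ is regular.
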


\begin{proof}
It is a presheaf because if $V\subset U$ are regulous open sets, we have a restriction morphism 
$$ \begin{array}{ccc}
    \KO_X(U) & \to & \KO_X(V)  \\
    f & \mapsto & f_{\mid V}
\end{array}$$
In order to prove that it is a sheaf, we consider a regulous open set $U$ and an open cover $\{U_i\}_{i\in I}$ of $U$. By Lemma \ref{LemIrreducibleLocal}, we can suppose that $X$ is irreducible. Let $\{f_i\}_{i\in I}$ be such that $f_i\in\KO_X(U_i)$ for all $i\in I$ and such that for all $i,j\in I$
$$ (f_i)_{\mid U_i\cap U_j} = (f_j)_{\mid U_i\cap U_j} $$
Then we can define the continuous function 
$$ \begin{array}{ccccl}
    f :& U & \to & \C & \\
     & x & \mapsto & f(x) &\text{if }x\in U_i
\end{array}$$
Moreover, for all $i\in I$, there is a Z-open set $V_i\cap U_i$ which is  Z-dense for the induced topology on $U_i$, on which $f_i$ is regular. Since $X$ is irreducible, all the Z-open sets are Z-dense so one can take any $i_0\in I$ and get that $f$ is regular on the Z-open Z-dense set $V_{i_0}\cap U_{i_0}$.
\end{proof}

Thanks to Proposition \ref{PropLocalisation}, we get the following extension theorem for regulous functions defined on a principal open set of an affine variety.
\newpage
\begin{prop}\label{PropLoja}
Let $f\in \KO(X(\C))$ and let $g:\Dcal^0(f) \to \C$ be an element of $\KO_X(\Dcal^0(f))$. Then there exists $N\in \mathbb{N}$ such that the function 
$$f^Ng := \left\{ \begin{array}{cl}
    f^N(x)g(x) & \text{ if }x\in\Dcal^0(f)\\
    0 & \text{ else}
\end{array} \right.$$ is an element of $\KO(X(\C))$.
\end{prop}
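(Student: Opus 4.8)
The plan is to transport the whole problem to the seminormalization $X^+$, where the regulous principal open set $\Dcal^0(f)$ becomes a genuine Zariski principal open set and where Proposition~\ref{PropLocalisation} can be applied. This detour is forced upon us: on $X$ itself $\Dcal^0(f)$ is only a regulous open set---it is affine precisely after seminormalization---so there is no way to localize $\C[X]$ at $f$ directly, and the localization statement of Proposition~\ref{PropLocalisation} concerns genuine Zariski principal open sets $\Dcal(q)$ with $q\in\C[X]$.

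Concretely, I would set $h := f\circ\piC^+ = \varphi(f)$, which lies in $\C[X^+]$ by Theorem~\ref{TheoRatioContEgalPolySN}. Since $\pi^+$ is subintegral, $\piC^+\colon X^+(\C)\to X(\C)$ is a birational map that is simultaneously a Z-homeomorphism and a Euclidean homeomorphism (Theorem~\ref{TheoSubSsiBij} together with the remark after Lemma~\ref{LemMorphFiniDoncFerme}). Because $h(y)\neq 0$ if and only if $f(\piC^+(y))\neq 0$, this homeomorphism carries the Zariski principal open set $\Dcal(h)\subset X^+(\C)$ bijectively onto the regulous open set $\Dcal^0(f)\subset X(\C)$. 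Pulling $g$ back through it, the function $\tilde g := g\circ\piC^+$ is continuous on $\Dcal(h)$ and, birationality preserving rationality, is regular on a Z-dense Z-open subset; hence $\tilde g\in\KO(\Dcal(h))$ when $\Dcal(h)$ is viewed as the affine variety $\spec(\C[X^+]_h)$.

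Next I would invoke Proposition~\ref{PropLocalisation} for $X^+$ and the multiplicative set $S=\{1,h,h^2,\dots\}$. As $X^+$ is seminormal we have $\KO(X^+(\C))=\C[X^+]$, so the proposition gives $\KO(\Dcal(h)) = S^{-1}\C[X^+]=\C[X^+]_h$. Therefore there exist $p\in\C[X^+]$ and $M\in\mathbb{N}$ with $\tilde g(y)=p(y)/h(y)^M$ for every $y\in\Dcal(h)$. Taking $N:=M+1$, on $\Dcal(h)$ one has $h^N\tilde g = h\,p$, while on the complementary closed set $\Zcal(h)$ both the extension by zero and the polynomial $h\,p$ vanish (since $h\equiv 0$ there). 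Thus the zero-extension of $h^N\tilde g$ coincides on all of $X^+(\C)$ with the polynomial $h\,p\in\C[X^+]$.

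It then remains to transport back. A pointwise check using $h(y)=f(\piC^+(y))$ and $\tilde g(y)=g(\piC^+(y))$ shows that $(h^N\tilde g)\circ(\piC^+)^{-1}$ is exactly $f^Ng$ extended by zero. Since $\varphi$ is an isomorphism whose inverse is $w\mapsto w\circ(\piC^+)^{-1}$, we conclude $f^Ng=\varphi^{-1}(h\,p)\in\KO(X(\C))$, as desired. The only genuinely non-formal point is the very first one---recognizing that $\Dcal^0(f)$ must be replaced by the affine $\Dcal(h)$ on $X^+$ before Proposition~\ref{PropLocalisation} is available; everything after that is bookkeeping through the homeomorphism $\piC^+$.
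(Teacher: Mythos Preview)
Your proof is correct and follows essentially the same route as the paper: transport to $X^+$ via $\pi^+$, observe that $h=f\circ\piC^+\in\C[X^+]$ so that $\Dcal^0(f)$ pulls back to the genuine Zariski principal open set $\Dcal(h)$, apply Proposition~\ref{PropLocalisation} there, and transport back through the isomorphism $\varphi$ of Theorem~\ref{TheoRatioContEgalPolySN}. The only cosmetic difference is that you unpack the localization statement as $\tilde g=p/h^M$ and then bump the exponent to $N=M+1$ so that the zero-extension agrees with $hp$ on all of $X^+(\C)$, whereas the paper simply cites Proposition~\ref{PropLocalisation} to obtain $N$ directly (its proof already arranges the vanishing on $\Zcal(q)$); this is the same argument written with a bit more detail.
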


\begin{proof}
Let $f\in\KO(X(\C))$ and $g:\Dcal^0(f) \to \C$. We write $\pi : X^+\to X$ the seminormalization morphism. Since $\pi^{-1}(\Dcal^0(f)) = \Dcal(f\circ\pi)$, we get $$g\circ\pi : \Dcal(f\circ\pi)\to \C \in \KO_{X^+}(\Dcal(f\circ\pi))$$
Moreover, by Theorem \ref{TheoRatioContEgalPolySN}, we have $f\circ\pi \in \C[X^+]$. Then $\Dcal(f\circ\pi)$ is affine and we can apply Proposition \ref{PropLocalisation} with $q:=f\circ\pi$. So we obtain an integer $N$ such that $(f\circ\pi)^N g\circ\pi\in\KO(X^+(\C)) = \C[X^+]$. Finally, thanks to Theorem \ref{TheoRatioContEgalPolySN} again, we get $$ \exists N\in\mathbb{N} \text{\hspace{0.8cm} } f^Ng\in\KO(X(\C))$$
\end{proof}

The previous proposition allows us to described the structure of the ring of regulous functions defined on a principal open set.
\begin{prop}\label{PropRegDefSurOuvertPrincipal}
Let $X$ be an affine variety and let $U := \Dcal^0(f)$ be a regulous open set with $f\in \KO(X(\C))$. Then, the restriction morphism from $\KO_X(X(\C))$ to $\KO_X(U)$ induces an isomorphism
$$ \KO_X(X(\C))_f \simeq \KO_X(U)$$
\end{prop}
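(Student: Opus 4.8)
The plan is to construct the localization map explicitly and then to check that it is bijective, the surjectivity being the only substantial point and being supplied by the extension Proposition \ref{PropLoja}. First I would note that $\KO_X(X(\C)) = \KO(X(\C))$ by definition, and that the restriction of $f$ to $U = \Dcal^0(f)$ is nowhere vanishing there; since $f$ is rational and continuous, $1/f$ is again rational and continuous on $U$, so that $1/f \in \KO_X(U)$ and $f$ restricts to a unit of $\KO_X(U)$. By the universal property of localization, the restriction morphism $\KO_X(X(\C)) \to \KO_X(U)$ therefore factors uniquely through a ring homomorphism
$$\Psi : \KO_X(X(\C))_f \to \KO_X(U), \qquad \frac{g}{f^n} \mapsto \left(\frac{g}{f^n}\right)\Big|_U.$$
It then remains to prove that $\Psi$ is injective and surjective.

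For injectivity, I would suppose $\Psi(g/f^n) = 0$, which means that $g$ vanishes on $U = \Dcal^0(f)$. Then the product $fg$ vanishes identically on $X(\C)$: at a point of $\Dcal^0(f)$ we have $g = 0$, while at a point of $\Zcal^0(f)$ we have $f = 0$. Hence $fg = 0$ in $\KO(X(\C))$, and consequently $g/f^n = fg/f^{n+1} = 0$ in the localization $\KO_X(X(\C))_f$, so $\Psi$ is injective.

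For surjectivity, I would take $h \in \KO_X(U)$ and apply Proposition \ref{PropLoja}, which provides an integer $N$ such that the function $f^N h$, extended by $0$ outside $\Dcal^0(f)$, belongs to $\KO(X(\C))$. Writing $g := f^N h \in \KO_X(X(\C))$, one has $g|_U = f^N h$ on $U$, whence $\Psi(g/f^N) = (g/f^N)|_U = h$; thus $\Psi$ is surjective and the claimed isomorphism follows.

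The main obstacle of the statement is concentrated entirely in the surjectivity step, namely the ability to clear the denominator of a regulous function on $\Dcal^0(f)$ by a power of $f$; but this has already been dispatched by Proposition \ref{PropLoja}, whose own proof rests on the commutation of localization and seminormalization (Proposition \ref{PropLocalisation}). The remaining verifications here — the well-definedness of $\Psi$ via the universal property and the injectivity — are elementary pointwise arguments on $X(\C)$, so no further delicate analysis should be required.
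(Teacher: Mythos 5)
Your proposal is correct and follows essentially the same route as the paper: the map is induced by the universal property of localization, surjectivity is exactly the application of Proposition \ref{PropLoja}, and injectivity is an elementary pointwise check. The one small divergence is in the injectivity step, where you deduce $fg=0$ on all of $X(\C)$ and conclude $g/f^n = fg/f^{n+1}=0$ in the localization, whereas the paper argues that $g$ vanishes on the Euclidean-dense set $U$ and hence everywhere; your variant is in fact slightly more robust, since $\Dcal^0(f)$ need not be dense when $f$ vanishes identically on some irreducible component of $X$.
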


\begin{proof}
Let $\psi : \KO_X(X(\C))\to \KO_X(U)$ be the restriction morphism. Since the restriction of $f$ to $U$ doesn't vanish, we get the induced morphism $$ \psi_f : \KO_X(X(\C))_f \to \KO_X(U) $$ 
Let $g\in \KO_X(U)$. Then, by Proposition \ref{PropLoja}, there exists $N\in \mathbb{N}$ such that $f^Ng \in \KO_X(X(\C))$. So $\psi_f$ is surjective.\\
Let $g/f^n\in \KO_X(X(\C))_f$ such that $\psi_f(g/f^n) = 0$. Then $(g/f^n)_{\mid U} = g_{\mid U}/f^n_{\mid U} = 0$ implies that $g$ vanish on the dense set $U$. So $g$ vanish on all $X(\C)$ because it is continuous for the euclidean topology and we get $g/f^n = 0$. Hence $\psi_f$ is injective.
\end{proof}

We now get the main result of this section which is a generalization of Theorem \ref{TheoRatioContEgalPolySN} for schemes.
\begin{theorem}\label{TheoIsoSchemaSN}
Let $X$ be an affine variety and $\pi : X^+ \to X$ be its seminormalization morphism. Then $(\pi,\pi^*)$ is an isomorphism of ringed spaces between $(X,\KO_X)$ and $(X^+,\Ocal_{X^+})$.
\end{theorem}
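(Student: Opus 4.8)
The plan is to assemble the isomorphism of ringed spaces from the pieces already established, by checking it on the structural level (topological spaces), on global sections, and on a basis of the topology, then invoking the standard fact that a morphism of sheaves inducing isomorphisms on a basis of opens is an isomorphism of sheaves.

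\textbf{Topological level.} First I would observe that the underlying continuous map is $\piC^+ : X^+(\C) \to X(\C)$, which by Theorem \ref{TheoSubSsiBij} is a Z-homeomorphism (since $\C[X]\inj\C[X^+]$ is subintegral). So $\pi$ identifies the two spaces as topological spaces. The one subtlety is that the statement concerns the \emph{regulous} topology on the source $X(\C)$ and the Zariski topology on the target $X^+$; I would note that, because $\pi^+$ is bijective and finite, the regulous open sets $\Dcal^0(f)$ of $X(\C)$ pull back to the sets $\Dcal(f\circ\piC^+)$ of $X^+(\C)$, which are exactly the principal Zariski opens of $X^+$ since $f\circ\piC^+\in\C[X^+]$ by Theorem \ref{TheoRatioContEgalPolySN}. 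Hence $\pi$ carries the distinguished basis of the regulous topology on $X$ bijectively onto the distinguished basis of principal opens on $X^+$.

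\textbf{Sheaf level.} The comparison map of sheaves sends a section $g\in\KO_X(U)$ to $g\circ\piC^+\in\Ocal_{X^+}\big((\piC^+)^{-1}(U)\big)$. To prove this is an isomorphism it suffices, by the basis criterion for sheaves, to check it on the distinguished opens $U=\Dcal^0(f)$, $f\in\KO(X(\C))$. On such an open, Proposition \ref{PropRegDefSurOuvertPrincipal} gives $\KO_X(\Dcal^0(f))\simeq \KO_X(X(\C))_f$, and Theorem \ref{TheoRatioContEgalPolySN} gives $\KO_X(X(\C))=\KO(X(\C))\simeq\C[X^+]$ via $\varphi$. On the target side, the corresponding open is the principal open $\Dcal(f\circ\piC^+)$ of $X^+$, whose sections are the localization $\C[X^+]_{f\circ\piC^+}$. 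Thus both sides are the localization of $\C[X^+]$ at the image of $f$, and the comparison map is exactly the identification of these localizations, hence an isomorphism. Compatibility with restriction maps is immediate since everything is induced by composition with the single map $\piC^+$.

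\textbf{The main obstacle} will be bookkeeping the two different topologies cleanly: one must make sure that the basis of principal regulous opens of $X(\C)$ really matches the basis of principal Zariski opens of $X^+$ under $\piC^+$, and that $\KO_X$ is determined by its values on this basis (which is why the sheaf axiom and Proposition \ref{PropRegDefSurOuvertPrincipal} are invoked). Once the basis correspondence and the global identification $\KO(X(\C))\simeq\C[X^+]$ are in place, the remaining verifications are formal. I would conclude by stating that $(\pi,\pi^*)$ is an isomorphism of ringed spaces, and that consequently $(X,\KO_X)$ is the affine scheme $\spec(\C[X^+])=X^+$.
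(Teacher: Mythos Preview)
Your proposal is correct and follows essentially the same route as the paper: establish that $\piC^+$ matches the basis of principal regulous opens $\Dcal^0(f)$ with the principal Zariski opens $\Dcal(f\circ\piC^+)$ of $X^+$, then combine Proposition \ref{PropRegDefSurOuvertPrincipal} with Theorem \ref{TheoRatioContEgalPolySN} to identify sections on each basic open with the corresponding localization of $\C[X^+]$. If anything, your write-up is more explicit than the paper's about the basis-of-opens reduction and the topology comparison, which is to your credit.
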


\begin{proof}
Let $f\in\KO(X(\C))$ and $g:\Dcal^0(f)\to \C \in \KO_X(\Dcal(f))$. By Proposition \ref{PropRegDefSurOuvertPrincipal}, we have $g\in\KO_X(X(\C))_f$, so there exists $h\in\KO(X(\C))$ and $N\in\mathbb{N}$ such that $g = h/f^N$ on $\Dcal^0(f)$. Then, by Theorem \ref{TheoRatioContEgalPolySN}, we get $g = h\circ\pi / f\circ\pi^N \in \Ocal_{X^+}(\Dcal(f\circ\pi))$.


Now, let $g : U\to \C \in \KO_X(U)$ with $U$ a regulous open set. Then we can write $U = \bigcup_{i=1}^s \Dcal(f_i)$ with $f_i\in\KO(X(\C))$. Then $g_{\mid\Dcal(f_i)}\in\KO(\Dcal(f_i))$, so $g\circ \pi_{\mid\Dcal(f_i\circ\pi)}\in\Ocal_{X^+}(\Dcal(f_i\circ\pi))$ and finally we get $g\circ\pi\in\Ocal_{X^+}(U)$ because $\Ocal_{X^+}$ is a sheaf.

Conversely, if $U^+$ is a Z-open set of $X^+(\C)$ and $f\in\Ocal_{X^+}(U^+)$, then $f\circ\pi^{-1} : U\to \C \in \KO_X(U)$ because $\pi$ is a bicontinuous birational morphism.
\end{proof}

\begin{rmq}
A part of the paper \cite{FHMM} is dedicated to prove Cartan's theorems A and B for real algebraic varieties with the sheaf of regulous functions. Since those theorems are true for complex algebraic varieties with the sheaf of regular functions (see \cite{Serre} Theorem 2 section 45 and Theorem 3 section 3), then Theorem \ref{TheoIsoSchemaSN} says that those results are also true for complex algebraic varieties with the sheaf of regulous functions.\\
\end{rmq}


\begin{cor}
Let $\p \subset \C[X^+]$ and $\q \in \C[X]$ be prime ideals such that $\p\cap \C[X] = \q$. Then 
$$ \Ocal_X(\Dcal(\q))^+ \simeq \KO_X(\Dcal(\q)) $$
\end{cor}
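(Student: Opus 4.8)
The plan is to identify both sides of the claimed isomorphism with the single local ring $\C[X^+]_{\p} = \Ocal_{X^+,\p}$, using that seminormalization commutes with localization together with Theorem \ref{TheoRatioContEgalPolySN}. First I would record the structural fact coming from bijectivity: since $\C[X]\inj\C[X^+]$ is subintegral, Theorem \ref{TheoSubSsiBij} tells us that $\piC$ is a bijection, so $\p$ is the \emph{unique} prime of $\C[X^+]$ lying over $\q$. Consequently, localizing $\C[X^+]$ at the multiplicative set $\C[X]\setminus\q$ produces exactly $\C[X^+]_{\p}$, because $\p$ is the only prime of $\C[X^+]$ meeting $\C[X]\setminus\q$ trivially.

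For the left-hand side I would argue as follows. Reading $\Ocal_X(\Dcal(\q))$ as the local ring $\C[X]_{\q}$, its seminormalization is $(\C[X]_{\q})^+$. The localization subsection gives that seminormalization commutes with localization; Proposition \ref{PropLocalisation} proves this for localization by a single element, and localization at $\q$ is the filtered colimit $\varinjlim_{q\notin\q}\C[X]_q$ of such single-element localizations. Since seminormalization is compatible with this directed system, I get $(\C[X]_{\q})^+ \simeq (\C[X]^+)_{\q}$, which by the previous paragraph is $\C[X^+]_{\p}$.

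For the right-hand side I would use the ringed-space isomorphism of Theorem \ref{TheoIsoSchemaSN}. Realizing $\KO_X(\Dcal(\q))$ as the stalk $\varinjlim_{q\notin\q}\KO_X(\Dcal(q))$, Proposition \ref{PropRegDefSurOuvertPrincipal} identifies each principal section with $\KO(X(\C))_q$, so the colimit is the localization of $\KO(X(\C))$ at $\C[X]\setminus\q$. By Theorem \ref{TheoRatioContEgalPolySN} we have $\KO(X(\C))\simeq\C[X^+]$ as $\C[X]$-algebras, and this localization again becomes $\C[X^+]_{\p}$ by the uniqueness of $\p$ over $\q$. Matching the two computations yields the asserted isomorphism, which is moreover the one induced by $\pi$.

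The main obstacle I anticipate is the passage from localization by a single element (Proposition \ref{PropLocalisation}) to localization at the prime $\q$: I must justify that both the formation of $(\cdot)^+$ and the formation of regulous sections commute with the filtered colimit over $\{q\notin\q\}$, and that the Zariski-principal neighborhoods $\Dcal(q)$ cofinally compute the relevant stalk. Ensuring the algebraic identifications are compatible with the map induced by $\pi$, so that the abstract isomorphism is genuinely the natural one, is the other point requiring care.
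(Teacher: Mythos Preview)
Your approach is correct and essentially identical to the paper's: both sides are identified with $\C[X^+]_{\p}$, using commutation of seminormalization with localization on the left and the ringed-space isomorphism of Theorem~\ref{TheoIsoSchemaSN} on the right. The only difference is that the paper simply cites Traverso \cite{T} for the commutation of seminormalization with localization at an arbitrary multiplicative set, so your filtered-colimit reduction to Proposition~\ref{PropLocalisation} is unnecessary extra work (and indeed the paper already mentions at the start of Section~5.2 that the general statement is proved in \cite{T} and \cite{Swan}); the obstacle you flag therefore disappears once you invoke that reference directly.
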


\begin{proof}
By Theorem \ref{TheoIsoSchemaSN} and since localization and seminormalization commute (see \cite{T}), we have 
$$ \Ocal_X(\Dcal(\q))^+ \simeq (\C[X]_{\q})^+ \simeq \C[X^+]_{\p} \simeq \Ocal_{X^+}(\Dcal(\p)) \simeq \KO_X(\Dcal(\q)) $$
\end{proof}

\bibliographystyle{abbrv}
\bibliography{Biblio.bib}

François Bernard, Université d’Angers, LAREMA, UMR 6093 CNRS, Faculté des Sciences Bâtiment I, 2 Boulevard Lavoisier, F-49045 Angers cedex 01, France\\
\textit{E-mail address: \textbf{bernard@math.univ-angers.fr}}

\end{document}